\newtheorem{proposition}{Proposition}
\newtheorem{definition}{Definition}
\newtheorem{theorem}{Theorem}
\newtheorem{remark}{Remark}
\newtheorem{assumption}{Assumption}
\newtheorem{property}{Property}
\newcommand{\N}{\mathbb{N}}
\newcommand{\cf}{\mathcal{F}}
\newcommand{\F}{\mathbb{F}}
\def\esssup{\text{ess sup}} 
\DeclareMathOperator*{\essinf}{ess\,inf}
\renewenvironment{proof}[1][\proofname]{%
  \par\pushQED{\qed}\normalfont%
  \topsep6\p@\@plus6\p@\relax
  \trivlist\item[\hskip\labelsep\bfseries#1\@addpunct{.}]%
  \ignorespaces
}{%
  \popQED\endtrivlist\@endpefalse
}
\author{ 
 Miryana Grigorova\footnote{Corresponding Author. Department of Statistics, University of Warwick, E-mail:miryana.grigorova@warwick.ac.uk} \\
 \and
 Marie-Claire Quenez \footnote{LPSM, University Paris-Cit\'e} 
 \and
 Peng Yuan \footnote{Department of Statistics, University of Warwick} 
}
\begin{document}
\title{The non-linear multiple stopping problem: between the discrete and the continuous time}
\maketitle

\textit{Abstract:}
We consider the non-linear optimal  multiple stopping problem under general conditions on the non-linear evaluation operators, which might depend on two time indices: the time of evaluation/assessment and the horizon (when the reward or loss is incurred). We do not assume convexity/concavity or cash-invariance. We focus on the case where the agent's stopping strategies are what we call Bermudan stopping strategies, a framework which can be seen as lying between the discrete and the continuous time. We first study the non-linear double optimal stopping problem by using a reduction approach. We provide a necessary and a sufficient condition for optimal pairs, and a result on existence of optimal pairs. We then generalize the results to the non-linear $d$-optimal stopping problem. We treat the symmetric case (of additive and multiplicative reward families) as examples.

\section{Introduction}
The \textbf{linear  multiple}  stopping problem where the assessment functionals (resp. operators) are the usual linear expectations (resp. linear conditional expectations) has been studied in \cite{Kobylanski-Quenez-Mironescu-1} and \cite{Kobylanski-Quenez-Mironescu-2}, and applications, in particular to mathematical finance, have been provided in \cite{Latifa-Bonnans-Mnif}, \cite{Bender-Schoenmakers-Zhang}, \cite{Carmona-Touzi-1} together with a dual representation. Recently,  optimal multiple stopping problems have attracted renewed interest due to connections with mean-field optimal stopping (\cite{Talbi-Touzi-Zhang-1} and \cite{Talbi-Touzi-Zhang-2}) and to advances in numerical methods (\cite{Han-Li} and \cite{Magnino-Zhu-Lauriere}). On the other hand, \textbf{non-linear} optimal stopping problems and their applications have also been popular in the recent years (cf. e.g. \cite{Bayraktar-2}, \cite{Bayraktar}, \cite{Bayraktar-3}, \cite{Ekren}, \cite{Grigorova-3}, \cite{Grigorova-4}, \cite{Grigorova-Quenez-Peng-1}, \cite{Grigorova_Am}, \cite{Rutkowski}, \cite{Schoe}, \cite{Nutz}, \cite{Quenez-Kobylanski}, \cite{Quenez-Sulem}). There are fewer works on \textbf{non-linear multiple} stopping problems. In \cite{Li-1}, the author considers a non-linear optimal multiple  stopping problem with $g$-expectations, generated by a particular case of BSDEs, with driver $g$, which does not depend on $y$ and which is concave in $z$, and \cite{Li-2} studies the problem under the so-called $\mathbbm{F}$-expectation operators introduced in \cite{Bayraktar}, \cite{Bayraktar-3}. In both cases the non-linear operators depend on one index only (that is, the time of evaluation) and do not depend on the second index. Moreover, some additional properties on the operators hold true, such as: zero-one-law, translation invariance, local property, sub-additivity and positive homogeneity in the case of \cite{Li-2}, and those induced by the particular assumptions on the driver $g$ in the case of \cite{Li-1}. In the current paper, we work non-linear operators with two time indices under general assumptions (cf. also the works \cite{Grigorova-3}, \cite{Grigorova-4}, \cite{Grigorova-Quenez-Peng-1}, \cite{Grigorova-2} and \cite{Grigorova-Quenez-Peng-2} where some non-linear operators with two time indices also appear). Recently, some authors (cf., e.g., \cite{Nunno-Gianin}) have also emphasised the importance of ``horizon risk'' in mathematical finance, captured by operators depending also on the second time index which is the horizon. We place ourselves in a framework, where the agent's strategies are ``in-between'' the discrete stopping strategies and the continuous time stopping strategies.\\
[0.2cm]
The structure of the present paper is as follows: In Section \ref{Section2}, we introduce the framework, including the Bermudan stopping strategies and the non-linear operators $\rho_{S, \tau}[\cdot]$. In Section \ref{Section3}, we focus on the \textbf{non-linear double} stopping problem. We establish some basic properties of the value family. In Subsection \ref{SubSection3-1}, we study the problem via the so-called reduction approach (introduced in \cite{Kobylanski-Quenez-Mironescu-1} and \cite{Kobylanski-Quenez-Mironescu-2} in the case of the usual linear expectations). We also provide a sufficient condition and a necessary condition for optimality, and a result on the existence of an optimal pair $(\tau_{1}^{*}, \tau_{2}^{*})$. In Section \ref{Section4}, we present the \textbf{non-linear $d$-stopping} problem (where $d \geq 2$) and establish some basic properties of the value family. In Subsection \ref{SubSection4-1}, we study the problem via a reduction approach. We also present the particular case of a symmetric reward family (including the additive case and the multiplicative case). In Subsection \ref{SubSection4-2}, we establish a sufficient condition and a necessary condition for optimality for the general \textbf{non-linear $d$-stopping} problem. In Section \ref{Section5}, we provide examples of non-linear operators entering our framework. 

\section{The framework} \label{Section2}
Let $T>0$ be a \textbf{fixed finite} terminal horizon.\\
[0.2cm]
Let $(\Omega,  \cf, P)$ be a (complete) probability space equipped with a right-continuous complete filtration $\F = \{\mathcal{F}_t \colon t\in[0,T]\}$. \\
[0.2cm]
In the sequel, equalities and inequalities between random variables are to be understood in the $P$-almost sure sense. Equalities between measurable sets are to be understood in the $P$-almost sure sense.\\
[0.2cm]
Let $\N$ be the set of natural numbers, including $0$.  Let $\N^*$ be the set of natural numbers, excluding $0$.\\
[0.2cm] 
We first define what we call the Bermudan stopping strategies (which we used also in \cite{Grigorova-Quenez-Peng-1} for the case of the single agent optimal stopping problem).\\
[0.2cm]
Let ($\theta_k)_{k\in\N}$ be a sequence of stopping times satisfying the following properties:
\begin{itemize}
\item[(a)] The sequence $(\theta_k)_{k\in\N}$  is non-decreasing, i.e. 
for all $k\in\N$, $\theta_k\leq \theta_{k+1}$, a.s. 
\item[(b)] $\lim_{k\to\infty}\uparrow \theta_k=T$ a.s. 
\end{itemize}
Moreover, we set $\theta_0=0$. \\
[0.2cm]
We note that the family of $\sigma$-algebras 
$({\cal F}_{\theta_k})_{k\in\N}$ is non- decreasing (as the sequence $(\theta_{k})$ is non-decreasing).
We denote by ${\Theta}$ the set of stopping times $\tau$ of the form 
\begin{equation}\label{form}
\tau= \sum_{k=0}^{+\infty} \theta_k {\bf 1}_{A_k}+T{\bf 1}_{\bar A},
\end{equation}
where $\{(A_{k})^{+\infty}_{k=0}, \bar{A}\}$ form a partition of $\Omega$ such that, for each $k\in\N$,  $A_{k} \in \mathcal{F}_{\theta_{k}}$, and $\bar{A} \in \mathcal{F}_{T}$.\\
[0.2cm]
The set ${\Theta}$ can also  be described as the set of stopping times $\tau$ such that for almost all $\omega \in \Omega$, either $\tau(\omega) = T$ or $\tau(\omega) = \theta_{k}(\omega)$, for some $k = k(\omega) \in \mathbbm{N}$.\\
[0.2cm]
Note that the set $\Theta$ is closed under concatenation, that is, for each $\tau$ $\in$ $\Theta$ and each 
$A \in {\cal F}_{\tau}$, the stopping time $\tau {\bf 1}_{A} + T {\bf 1}_{A^c}$ $\in$ $\Theta$. More generally, for each $\tau$ $\in$ $\Theta$, $\tau'$ $\in$ $\Theta$ and each 
$A \in {\cal F}_{\tau\wedge \tau'}$, the stopping time $\tau {\bf 1}_{A} + \tau' {\bf 1}_{A^c}$ is in  $\Theta$.  The set $\Theta$ is also closed under pairwise minimization (that is, for  each $\tau\in\Theta$ and $\tau'\in\Theta$, we have $\tau\wedge \tau'\in\Theta$) and under pairwise maximization (that is, for  each $\tau\in\Theta$ and $\tau'\in\Theta$, we have $\tau\vee \tau'\in\Theta$). Moreover, the set $\Theta$ is closed under monotone limit, that is, for each non-decreasing (resp. non-increasing) sequence of stopping times $(\tau_n)_{n \in {\mathbb N}} \in \Theta^{\mathbbm{N}}$, we have $\lim_{n \rightarrow + \infty} \tau_n$  $\in$ $\Theta$.\\
[0.2cm]
We note also that all stopping times in $\Theta$ are bounded from above by $T$. 
\begin{remark}\label{Rk_canonical} 
We have the following \emph{canonical} writing of the sets in \eqref{form}: 
\begin{align*}
A_0&=\{\tau=\theta_0\};\\
A_{n+1}&=\{\tau=\theta_{n+1},\theta_{n+1}<T\}\backslash (A_n\cup...\cup A_0); \text { for all } n\in\N^*\\
\bar A&=(\cup_{k=0}^{+\infty} A_k)^c  
\end{align*}  
From this writing, we have: if $\omega\in A_{k+1}\cap \{\theta_k<T\}$, then $\omega\notin \{\tau=\theta_k\}.$ 
\end{remark}
\noindent
For each $\tau \in \Theta$, we denote by $\Theta_{\tau}$ the set of stopping times $\nu \in 
\Theta$ such that $\nu \geq \tau$ a.s.\, The set $\Theta_{\tau}$ satisfies the same properties as 
the set $\Theta$.
We will refer to the set $\Theta$ as the set of \textbf{Bermudan stopping strategies},
and to the set $\Theta_\tau$ as the set of Bermudan stopping strategies, greater than or equal to $\tau$ (or the set of Bermudan stopping strategies from time $\tau$ perspective). For simplicity, the set $\Theta_{\theta_{k}}$ will be denoted by $\Theta_{k}$.\\
[0.3cm]
\noindent
We recall the definition of an admissible family and of a bi-admissible family.

\begin{definition} \label{admissible}
We say that a family 
$\phi=(\phi(\tau), \, \tau \in \Theta)$  is \emph{admissible} if it satisfies the following conditions 
\par
1. \quad for all
$\tau \in \Theta$, $\phi(\tau)$ is a real valued random variable, which is  $\mathcal{F}_\tau$-measurable. \par
 2. \quad  for all
$\tau,\tau'\in \Theta$, $\phi(\tau)=\phi(\tau')$ a.s.  on
$\{\tau=\tau'\}$.\\
[0.2cm]
\noindent
Moreover, for $p \in [1, +\infty]$ fixed, we say that an admissible family $\phi$ is $p$-integrable, if for all $\tau \in \Theta$, $\phi(\tau)$ is in $L^{p}$.

\end{definition}

\begin{definition} \label{bi-admissible}
Let $(\psi(\tau_{1}, \tau_{2}))_{(\tau_{1}, \tau_{2}) \in \Theta \times \Theta}$ be a family of random variables doubly indexed by two Bermudan stopping times. We say that the family $\psi$ is \textbf{bi-admissible} if it satisfies the following properties:
\begin{itemize}
\item[(a)] For each $(\tau_{1}, \tau_{2}) \in \Theta \times \Theta$, the random variable $\psi(\tau_{1}, \tau_{2})$ is $\mathcal{F}_{\tau_{1}\vee\tau_{2}}$ - measurable.
\item[(b)] $\psi(\tau_{1}, \tau_{2}) = \psi(\tilde{\tau}_{1}, \tilde{\tau}_{2})$ on the set $\{\tau_{1} = \tilde{\tau}_{1}\} \cap \{\tau_{2} = \tilde{\tau}_{2}\}$.
\end{itemize}
Moreover, for $p \in [1, +\infty]$ fixed, we say that a bi-admissible family $\psi$ is $p$-integrable, if for all $\tau_{1} \in \Theta$, for all $\tau_{2} \in \Theta$, $\psi(\tau_{1}, \tau_{2}) \in L^{p}$.

\end{definition}
\noindent
Let $p \in [1, +\infty]$. We introduce the following properties  on the non-linear operators $\rho_{S, \tau}[\cdot]$, which will appear in the sequel. \\
[0.2cm] 
For
$S\in\Theta$, $S'\in\Theta$, $\tau\in\Theta$, for $\eta$, $\eta_{1}$ and $\eta_2$ in $L^p(\cf_\tau)$, for $\xi=(\xi(\tau))$ an admissible p-integrable family:  
\begin{compactenum}[(i)]
\item[(i)] $\rho_{S,\tau}: L^p(\cf_\tau)\longrightarrow L^p(\cf_S)$ 
\item[(ii)] \emph{(admissibility)} $\rho_{S,\tau}[\eta]=\rho_{S',\tau}[\eta]$ a.s.  on $\{S=S'\}$. 
\item[(iii)] \emph{(knowledge preservation)}
$\rho_{\tau,S}[\eta]=\eta, $
for all $\eta\in L^p(\cf_S)$, all $\tau\in\Theta_S.$
\item[(iv)] \emph{(monotonicity)}  $\;\rho_{S,\tau}[\eta_1]\leq \rho_{S,\tau}[\eta_2]$ a.s., if  $\eta_1\leq \eta_2$ a.s. 
\item[(v)] \emph{(consistency)}  $\;\rho_{S,\theta}[\rho_{\theta,\tau}[\eta]]= \rho_{S,\tau}[\eta]$, for all $S, \theta, \tau$ in $\Theta$ such that $S\leq \theta\leq \tau$ a.s.  
\item [(vi)] \emph{("generalized zero-one law") }$\; I_A\rho_{S,\tau}[\xi(\tau)] =I_A\rho_{S,\tau'}[\xi(\tau')],$ for all $A\in\cf_S$, $\tau\in \Theta_{S}$, $\tau'\in\Theta_{S}$ such that $\tau=\tau'$ on $A$.  
\item[(vi bis)] $\mathbbm{1}_{\{\tau_{1} = \tau_{1}'\}}\rho_{\tau_{1}, \tau_{1} \vee \tau_{2}}[\psi(\tau_{1}, \tau_{2})] = \mathbbm{1}_{\{\tau_{1} = \tau_{1}'\}}\rho_{\tau_{1}', \tau_{1}' \vee \tau_{2}}[\psi(\tau_{1}', \tau_{2})]$;\\[0.2cm]
$\mathbbm{1}_{\{\tau_{2} = \tau_{2}'\}}\rho_{\tau_{2}, \tau_{1} \vee \tau_{2}}[\psi(\tau_{1}, \tau_{2})] = \mathbbm{1}_{\{\tau_{2} = \tau_{2}'\}}\rho_{\tau_{2}', \tau_{1} \vee \tau_{2}'}[\psi(\tau_{1}, \tau_{2}')]$.
\item[(vi ter)] $\mathbbm{1}_{A}\rho_{\tau_{1} \wedge \tau_{2}, \tau_{1} \vee \tau_{2}}[\psi(\tau_{1} \wedge \tau_{2}, \tau_{1} \vee \tau_{2})] = \mathbbm{1}_{A}\rho_{\tau_{1} \wedge \tau_{2}, \tau_{1} \vee \tau_{2}}[\psi(\tau_{1}, \tau_{2})] = \\ 
\mathbbm{1}_{A}\rho_{\tau_{1}, \tau_{2}}[\psi(\tau_{1}, \tau_{2})],$ if $A$ is $\mathcal{F}_{\tau_{1} \wedge \tau_{2}}$-measurable, and $\tau_{1} \wedge \tau_{2} = \tau_{1}$ a.s. on $A$, and $\tau_{1} \vee \tau_{2} = \tau_{2}$ a.s. on $A$. Moreover, $\mathbbm{1}_{A}\rho_{\tau_{1} \wedge \tau_{2}, \tau_{1} \vee \tau_{2}}[\psi(\tau_{1} \vee \tau_{2}, \tau_{1} \wedge \tau_{2})] = \mathbbm{1}_{A}\rho_{\tau_{1} \wedge \tau_{2}, \tau_{1} \vee \tau_{2}}[\psi(\tau_{2}, \tau_{1})] = \mathbbm{1}_{A}\rho_{\tau_{1}, \tau_{2}}[\psi(\tau_{2}, \tau_{1})],$ if $A$ is $\mathcal{F}_{\tau_{1} \wedge \tau_{2}}$-measurable, and $\tau_{1} \wedge \tau_{2} = \tau_{1}$ a.s. on $A$, and $\tau_{1} \vee \tau_{2} = \tau_{2}$ a.s. on $A$.
\item[(vii)] \emph{(monotone Fatou property with respect to the terminal condition)}\\
$\rho_{S, \tau}[\eta] \leq \liminf_{n \to +\infty} \rho_{S, \tau}[\eta_{n}]$, for $(\eta_{n}), \eta$ such that $(\eta_{n})$ is non-decreasing, $\eta_{n} \in L^{p}(\cf_{\tau})$, $\sup_{n}\eta_{n} \in L^{p}$, and $\lim_{n \to +\infty} \uparrow \eta_{n} = \eta$ a.s.\\
\item[(viii)] (left-upper-semicontinuity (LUSC) along Bermudan stopping times with respect to the terminal condition and the terminal time), that is,  
$$\limsup_{n \to +\infty}\rho_{S, \tau_{n}}[\phi(\tau_{n})] \leq \rho_{S, \nu}[\limsup_{n \to +\infty} \phi(\tau_{n})],$$
$\text{for each non-decreasing sequence }(\tau_{n}) \in \Theta_{S}^{\N} \text{ such that }\lim_{n \to +\infty} \uparrow \tau_{n} = \nu \text{ a.s.},$ and for each $p$-integrable admissible family $\phi$ such that $\sup_{n\in\N} |\phi(\tau_n)|\in L^p.$\\
\item[(ix)] $\limsup_{n \to +\infty}\rho_{\theta_{n}, T}[\eta] \leq \rho_{T, T}[\eta], \text { for all } \eta\in L^p(\cf_T).$\\
\end{compactenum}

\noindent
Let us note that, if $\psi$ is symmetric, that is, $\psi(\tau_{1}, \tau_{2}) = \psi(\tau_{2}, \tau_{1})$, then the two properties in (vi bis) reduce to one equality.\\
[0.2cm]
In Section \ref{Section5}, we provide some examples of operators $\rho_{S, \tau}$ entering our framework.

\begin{remark} \label{TEXREmark2}
We will show that $\rho$ satisfies the following property
$$\mathbbm{1}_{A}\rho_{S, \tau_{1} \vee \tau_{2}}[\psi(\tau_{1}, \tau_{2})] = \mathbbm{1}_{A}\rho_{S, \tau_{1}' \vee \tau_{2}'}[\psi(\tau_{1}', \tau_{2}')]$$
for all $A \in \mathcal{F}_{S}, (\tau_{1}, \tau_{2}) \in \Theta_{S} \times \Theta_{S}$ and $(\tau_{1}', \tau_{2}') \in \Theta_{S} \times \Theta_{S}$ such that $\tau_{1} = \tau_{1}'$ on $A$, and $\tau_{2} = \tau_{2}'$ on $A$.\\
[0.2cm]
Indeed, by the consistency,  
$$\mathbbm{1}_{A}\rho_{S, \tau_{1} \vee \tau_{2}}[\psi(\tau_{1}, \tau_{2})] = \mathbbm{1}_{A}\rho_{S, \tau_{1}}[\rho _{\tau_{1}, \tau_{1} \vee \tau_{2}}[\psi(\tau_{1}, \tau_{2})]].$$
We note that, for $\tau_{2} \in \Theta$ fixed, the family $(\tilde{\phi}(\tau_{1}))_{\tau_{1} \in \Theta_{S}}$ is admissible (by property (vi bis)) and $p$-integrable, where $\tilde{\phi}(\tau_{1})$ is defined by:
$$\tilde{\phi}(\tau_{1}) \coloneqq \rho_{\tau_{1}, \tau_{1} \vee \tau_{2}}[\psi(\tau_{1}, \tau_{2})].$$
By applying the ``generalized zero-one law'', we get 
\begin{align*}
\mathbbm{1}_{A}\rho_{S, \tau_{1}}[\rho _{\tau_{1}, \tau_{1} \vee \tau_{2}}[\psi(\tau_{1}, \tau_{2})]] &= \mathbbm{1}_{A}\rho_{S, \tau_{1}'}[\rho _{\tau_{1}', \tau_{1}' \vee \tau_{2}}[\psi(\tau_{1}', \tau_{2})]]\\
&= \mathbbm{1}_{A}\rho_{S, \tau_{1}' \vee \tau_{2}}[\psi(\tau_{1}', \tau_{2})],
\end{align*}
where we have used the consistency property of $\rho$ for the last equality.\\
[0.2cm]
By the consistency property of $\rho$, 
$$\mathbbm{1}_{A}\rho_{S, \tau_{1}'\vee\tau_{2}}[\psi(\tau_{1}', \tau_{2})] = \mathbbm{1}_{A}\rho_{S, \tau_{2}}[\rho_{\tau_{2}, \tau_{1}' \vee \tau_{2}}[\psi(\tau_{1}', \tau_{2})]].$$
For $\tau_{1}'$ given, the family $(\tilde{\psi}(\tau_{2}))_{\tau_{2} \in \Theta_{S}}$, defined by, $\tilde{\psi}(\tau_{2}) \coloneqq \rho_{\tau_{2}, \tau_{1}' \vee \tau_{2}}[\psi(\tau_{1}', \tau_{2})]$, it is admissible (by property (vi bis) of $\rho$) and $p$-integrable.\\
[0.2cm]
Hence, by the ``generalized zero-one law'', we get 
$$\mathbbm{1}_{A}\rho_{S, \tau_{2}}[\rho_{\tau_{2}, \tau_{1}' \vee \tau_{2}}[\psi(\tau_{1}', \tau_{2})]] = \mathbbm{1}_{A}\rho_{S, \tau_{2}'}[\rho_{\tau_{2}', \tau_{1}' \vee \tau_{2}'}[\psi(\tau_{1}', \tau_{2}')]] = \mathbbm{1}_{A}\rho_{S, \tau_{1}' \vee \tau_{2}'}[\psi(\tau_{1}', \tau_{2}')],$$
where we have used the consistency property of $\rho$ for the last equality.\\
[0.2cm]
Finally, we get:
$$\mathbbm{1}_{A}\rho_{S, \tau_{1}\vee\tau_{2}}[\psi(\tau_{1}, \tau_{2})] = \mathbbm{1}_{A}\rho_{S, \tau_{1}'\vee\tau_{2}'}[\psi(\tau_{1}', \tau_{2}')].$$

\end{remark}

\begin{definition}\label{def_supermartingaleBIS}
Let $\phi=(\phi(\tau), \, \tau \in \Theta)$ be a \emph{$p$-integrable admissible} family.
 We say that $\phi$
 is a \emph{$(\Theta, \rho)$-supermartingale} (resp. \emph{$(\Theta, \rho)$--martingale}) \emph{ family} if 
 for all $\sigma, \tau$ in $\Theta$ such that $\sigma\leq\tau$ a.s., we have 
$$\rho_{\sigma,\tau}[\phi({\tau})]\leq \phi({\sigma}) \text{ (resp. }=\phi(\sigma)) \text{ a.s.} $$

\end{definition}

\section{The optimal non-linear double stopping problem} \label{Section3}
Let $(\psi(\tau_{1}, \tau_{2}))_{(\tau_{1}, \tau_{2}) \in \Theta \times \Theta}$ be a bi-admissible, $p$-integrable family.
We present the optimization problem of interest:\\
[0.2cm]
Let $S \in \Theta$ be given. We define $V(S)$ by
\begin{equation}\label{TEXEquation-3-222}
V(S) = \esssup_{(\tau_{1}, \tau_{2}) \in \Theta_{S} \times \Theta_{S}}\rho_{S, \tau_{1}\vee\tau_{2}}[\psi(\tau_{1}, \tau_{2})].
\end{equation}

\begin{assumption} \label{TEXAssumption3-111}
For each $S \in \Theta$, $V(S) \in L^{p}$.

\end{assumption}

\begin{proposition}\label{TEXproposition2}
i) $(V(S))_{S \in \Theta}$ is an admissible family.\\
[0.2cm]
ii) (maximising sequence property) There exists a sequence of pairs of Bermudan stopping times $(\tau_{1}^{n}, \tau_{2}^{n}) \in \Theta_{S} \times \Theta_{S}$ such that 
\begin{itemize}
\item{} $(\rho_{S, \tau_{1}^{n} \vee \tau_{2}^{n}}[\psi(\tau_{1}^{n}, \tau_{2}^{n})])$ is non-decreasing and,
\item{} $V(S) = \lim_{n \to +\infty} \uparrow \rho_{S, \tau_{1}^{n} \vee \tau_{2}^{n}}[\psi(\tau_{1}^{n}, \tau_{2}^{n})]$.
\end{itemize}
iii) The value family $(V(S))_{S \in \Theta}$ is a $(\Theta, \rho)$ - supermartingale family.

\end{proposition}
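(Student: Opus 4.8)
The plan is to prove the three parts in order, each time reducing to the ``generalized zero-one law'' / consistency machinery already developed for the operators $\rho$, together with the classical lattice argument for essential suprema.

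For part (i), the admissibility of $(V(S))_{S\in\Theta}$, I would argue as follows. Measurability of $V(S)$ with respect to $\mathcal{F}_S$ is immediate from the definition \eqref{TEXEquation-3-222}: each $\rho_{S,\tau_1\vee\tau_2}[\psi(\tau_1,\tau_2)]$ is $\mathcal{F}_S$-measurable by property (i) of the operators, and an essential supremum of an $\mathcal{F}_S$-measurable family is $\mathcal{F}_S$-measurable; integrability is Assumption \ref{TEXAssumption3-111}. The substantive point is the consistency-on-$\{S=S'\}$ condition: for $S,S'\in\Theta$ and $A:=\{S=S'\}\in\mathcal{F}_S\cap\mathcal{F}_{S'}$, I would show $\mathbbm{1}_A V(S)=\mathbbm{1}_A V(S')$. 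The key observation is that on $A$ the sets $\Theta_S$ and $\Theta_{S'}$ agree after concatenation at $T$ off $A$: given $(\tau_1,\tau_2)\in\Theta_S\times\Theta_S$, the pair $(\tau_1\mathbbm{1}_A+T\mathbbm{1}_{A^c},\tau_2\mathbbm{1}_A+T\mathbbm{1}_{A^c})$ lies in $\Theta_{S'}\times\Theta_{S'}$ (using that $\Theta$ is closed under concatenation and $A\in\mathcal{F}_{S}\subseteq\mathcal{F}_{\tau_i}$), and by property (ii) (admissibility of $\rho$ in the first index, $S=S'$ on $A$) together with the property established in Remark \ref{TEXREmark2} (invariance of $\rho_{S,\tau_1\vee\tau_2}[\psi(\tau_1,\tau_2)]$ under modifying $(\tau_1,\tau_2)$ off $A$), one gets $\mathbbm{1}_A\rho_{S,\tau_1\vee\tau_2}[\psi(\tau_1,\tau_2)]=\mathbbm{1}_A\rho_{S',\tilde\tau_1\vee\tilde\tau_2}[\psi(\tilde\tau_1,\tilde\tau_2)]\le \mathbbm{1}_A V(S')$. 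Taking the essential supremum over $(\tau_1,\tau_2)$ gives $\mathbbm{1}_A V(S)\le\mathbbm{1}_A V(S')$, and the reverse inequality is symmetric.

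For part (ii), the maximising-sequence property, the standard fact is that the family $\{\rho_{S,\tau_1\vee\tau_2}[\psi(\tau_1,\tau_2)] : (\tau_1,\tau_2)\in\Theta_S\times\Theta_S\}$ is stable under pairwise maximisation; then a classical result on essential suprema (e.g. Neveu) yields a non-decreasing sequence attaining $V(S)$ as an increasing limit. So the one thing to check is the lattice property: given two pairs $(\tau_1,\tau_2)$ and $(\tau_1',\tau_2')$ in $\Theta_S\times\Theta_S$, set $B:=\{\rho_{S,\tau_1\vee\tau_2}[\psi(\tau_1,\tau_2)]\ge \rho_{S,\tau_1'\vee\tau_2'}[\psi(\tau_1',\tau_2')]\}\in\mathcal{F}_S$ and define $\hat\tau_i:=\tau_i\mathbbm{1}_B+\tau_i'\mathbbm{1}_{B^c}$ for $i=1,2$; since $B\in\mathcal{F}_S\subseteq\mathcal{F}_{\tau_i\wedge\tau_i'}$, each $\hat\tau_i\in\Theta_S$ by closedness of $\Theta$ under concatenation. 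Using the property from Remark \ref{TEXREmark2} on $B$ (with $\hat\tau_i=\tau_i$ there) and on $B^c$ (with $\hat\tau_i=\tau_i'$ there), one gets $\rho_{S,\hat\tau_1\vee\hat\tau_2}[\psi(\hat\tau_1,\hat\tau_2)]=\max\big(\rho_{S,\tau_1\vee\tau_2}[\psi(\tau_1,\tau_2)],\rho_{S,\tau_1'\vee\tau_2'}[\psi(\tau_1',\tau_2')]\big)$, which is the desired stability.

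For part (iii), the $(\Theta,\rho)$-supermartingale property, I would fix $\sigma\le\tau$ in $\Theta$ and show $\rho_{\sigma,\tau}[V(\tau)]\le V(\sigma)$. Take the maximising sequence $(\tau_1^n,\tau_2^n)\in\Theta_\tau\times\Theta_\tau$ from part (ii) for $V(\tau)$; by monotonicity (iv) and the monotone Fatou property (vii) of $\rho$, $\rho_{\sigma,\tau}[V(\tau)]=\rho_{\sigma,\tau}[\lim_n\uparrow\rho_{\tau,\tau_1^n\vee\tau_2^n}[\psi(\tau_1^n,\tau_2^n)]]\le\liminf_n\rho_{\sigma,\tau}[\rho_{\tau,\tau_1^n\vee\tau_2^n}[\psi(\tau_1^n,\tau_2^n)]]$. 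Now, for each $n$, $\sigma\le\tau\le\tau_1^n\vee\tau_2^n$, so the consistency property (v) gives $\rho_{\sigma,\tau}[\rho_{\tau,\tau_1^n\vee\tau_2^n}[\psi(\tau_1^n,\tau_2^n)]]=\rho_{\sigma,\tau_1^n\vee\tau_2^n}[\psi(\tau_1^n,\tau_2^n)]$; and since $(\tau_1^n,\tau_2^n)\in\Theta_\tau\times\Theta_\tau\subseteq\Theta_\sigma\times\Theta_\sigma$, each such term is $\le V(\sigma)$ by definition. Passing to the $\liminf$ gives the claim. The main obstacle throughout is the bookkeeping in part (i): one must be careful that modifying a pair of Bermudan strategies off the set $\{S=S'\}$ keeps it in $\Theta$ and does not change the relevant $\rho$-value on $\{S=S'\}$ — this is exactly what Remark \ref{TEXREmark2} and the admissibility property (ii) are designed to handle, so the argument is routine once those are invoked, but it does need to be spelled out. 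Parts (ii) and (iii) are then comparatively mechanical applications of the lattice structure, consistency, and the Fatou property.
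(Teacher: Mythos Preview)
Your proposal is correct and follows essentially the same route as the paper's proof: part (i) via admissibility of $\rho$ plus the concatenation $\tau_i\mathbbm{1}_A+T\mathbbm{1}_{A^c}$ and Remark \ref{TEXREmark2}; part (ii) via the directed-upwards argument with $\hat\tau_i=\tau_i\mathbbm{1}_B+\tau_i'\mathbbm{1}_{B^c}$ and Remark \ref{TEXREmark2}; part (iii) via the maximising sequence, the monotone Fatou property (vii), and consistency (v). The only differences are notational.
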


\begin{proof}
\textbf{(i)} For each $S \in \Theta$, $V(S)$ is $\mathcal{F}_{S}$-measurable; this is due to property (i) of $\rho$ and to a well-known property of the essential supremum.\\
[0.2cm]
Let $S, S' \in \Theta$. We set $A = \{S = S'\}$ and we will show that $V(S) = V(S')$ $P-a.s.$ on $A$.\\
[0.2cm]
We have 
\begin{align*}
\mathbbm{1}_{A}V(S) &= \mathbbm{1}_{A}\esssup_{(\tau_{1}, \tau_{2}) \in \Theta_{S} \times \Theta_{S}} \rho_{S, \tau_{1} \vee \tau_{2}}[\psi(\tau_{1}, \tau_{2})]\\
&= \esssup_{(\tau_{1}, \tau_{2}) \in \Theta_{S} \times \Theta_{S}} \mathbbm{1}_{A}\rho_{S, \tau_{1} \vee \tau_{2}}[\psi(\tau_{1}, \tau_{2})]\\
&= \esssup_{(\tau_{1}, \tau_{2}) \in \Theta_{S} \times \Theta_{S}} \mathbbm{1}_{A}\rho_{S', \tau_{1} \vee \tau_{2}}[\psi(\tau_{1}, \tau_{2})],
\end{align*}
where for the last equality we have used the admissibility property of $\rho$ (property (ii)).\\
[0.2cm]
Let $\tau_{1} \in \Theta_{S}$ and $\tau_{2} \in \Theta_{S}$. Let $\tau_{1}^{A} \coloneqq \tau_{1}\mathbbm{1}_{A} + T\mathbbm{1}_{A^{c}}$ and $\tau_{2}^{A} \coloneqq \tau_{2}\mathbbm{1}_{A} + T\mathbbm{1}_{A^{c}}$. Then, $\tau_{1}^{A}$ and $\tau_{2}^{A}$ are in $\Theta_{S'}$ (where we have used the property of stability by concatenation of $\Theta$). Moreover, $\tau_{1}^{A} = \tau_{1}$ on $A$ and $\tau_{2}^{A} = \tau_{2}$ on $A$. Hence, $\tau_{1}^{A} \vee \tau_{2}^{A} = \tau_{1} \vee \tau_{2}$ on $A$. Thus, by Remark \ref{TEXREmark2} we get
\begin{align*}
\mathbbm{1}_{A}\rho_{S', \tau_{1} \vee \tau_{2}}[\psi(\tau_{1}, \tau_{2})] &= \mathbbm{1}_{A}\rho_{S', \tau_{1}^{A} \vee \tau_{2}^{A}}[\psi(\tau_{1}^{A}, \tau_{2}^{A})]\\
&\leq \mathbbm{1}_{A}V(S').
\end{align*}
We have shown that $\mathbbm{1}_{A}V(S) \leq \mathbbm{1}_{A}V(S')$. We obtain the converse inequality by interchanging the roles of $S$ and $S'$. This ends the proof of (i).\\
[0.2cm]
\textbf{(ii)} It is sufficient to show that the family $(\rho_{S, \tau_{1} \vee \tau_{2}}[\psi(\tau_{1}, \tau_{2})])_{(\tau_{1}, \tau_{2}) \in \Theta_{S} \times \Theta_{S}}$ is directed upwards.\\
[0.2cm]
Let $(\tau_{1}, \tau_{2}) \in \Theta_{S} \times \Theta_{S}$ and $(\tau_{1}', \tau_{2}') \in \Theta_{S} \times \Theta_{S}$. We define the set $A \coloneqq \{\rho_{S, \tau_{1}' \vee \tau_{2}'}[\psi(\tau_{1}', \tau_{2}')] \leq \rho_{S, \tau_{1} \vee \tau_{2}}[\psi(\tau_{1}, \tau_{2})]\}$. Trivially, $A \in \mathcal{F}_{S}$.\\
[0.2cm] 
We set: $\nu_{1} = \tau_{1}\mathbbm{1}_{A} + \tau_{1}'\mathbbm{1}_{A^{c}}$ and $\nu_{2} = \tau_{2}\mathbbm{1}_{A} + \tau_{2}'\mathbbm{1}_{A^{c}}$. We have: $(\nu_{1}, \nu_{2}) \in \Theta_{S} \times \Theta_{S}$. \\
[0.2cm]
We have by Remark \ref{TEXREmark2}:
\begin{align*}
\rho_{S, \nu_{1} \vee \nu_{2}}[\psi(\nu_{1}, \nu_{2})] &= \rho_{S, \nu_{1} \vee \nu_{2}}[\psi(\nu_{1}, \nu_{2})]\times \mathbbm{1}_{A} + \rho_{S, \nu_{1} \vee \nu_{2}}[\psi(\nu_{1}, \nu_{2})] \times \mathbbm{1}_{A^{c}}\\
&= \rho_{S, \tau_{1} \vee \tau_{2}}[\psi(\tau_{1}, \tau_{2})]\times \mathbbm{1}_{A} + \rho_{S, \tau_{1}' \vee \tau_{2}'}[\psi(\tau_{1}', \tau_{2}')] \times \mathbbm{1}_{A^{c}}\\
&= \max(\rho_{S, \tau_{1} \vee \tau_{2}}[\psi(\tau_{1}, \tau_{2})], \rho_{S, \tau_{1}' \vee \tau_{2}'}[\psi(\tau_{1}', \tau_{2}')]).
\end{align*}
Therefore, the family $(\rho_{S, \tau_{1} \vee \tau_{2}}[\psi(\tau_{1}, \tau_{2})])_{(\tau_{1}, \tau_{2}) \in \Theta_{S} \times \Theta_{S}}$ is directed upwards. Hence, by a well-known property of the essential supremum, statement (ii) holds true.\\
[0.2cm]
\textbf{(iii)} The family $(V(S))_{S \in \Theta}$ is admissible (by statement (i)) and $p$-integrable (by Assumption \ref{TEXAssumption3-111}). Let $S, S' \in \Theta$ be such that $S \leq S'$ a.s. By statement (ii), there exists a sequence $(\tau_{1}^{n}, \tau_{2}^{n}) \in \Theta_{S'} \times \Theta_{S'}$ such that $V(S') = \lim_{n \to +\infty}\uparrow\rho_{S', \tau_{1}^{n} \vee \tau_{2}^{n}}[\psi(\tau_{1}^{n}, \tau_{2}^{n})]$. Hence, 
\begin{align*}
\rho_{S, S'}[V(S')] &= \rho_{S, S'}[\lim_{n \to +\infty}\uparrow\rho_{S', \tau_{1}^{n} \vee \tau_{2}^{n}}[\psi(\tau_{1}^{n}, \tau_{2}^{n})]]\\
&\leq \liminf_{n \to +\infty}\rho_{S, S'}[\rho_{S', \tau_{1}^{n} \vee \tau_{2}^{n}}[\psi(\tau_{1}^{n}, \tau_{2}^{n})]],
\end{align*}
where we have used the monotone Fatou property of $\rho_{S, S'}$ with respect to the terminal condition (property (vii)).\\
[0.2cm]
Now, by the consistency property, 
\begin{align*}
\rho_{S, S'}[\rho_{S', \tau_{1}^{n} \vee \tau_{2}^{n}}[\psi(\tau_{1}^{n}, \tau_{2}^{n})]] = \rho_{S, \tau_{1}^{n} \vee \tau_{2}^{n}}[\psi(\tau_{1}^{n}, \tau_{2}^{n})].
\end{align*}
Finally, 
\begin{align*}
\rho_{S, S'}[V(S')] &\leq  \liminf_{n \to +\infty}\rho_{S, \tau_{1}^{n} \vee \tau_{2}^{n}}[\psi(\tau_{1}^{n}, \tau_{2}^{n})] \leq V(S).
\end{align*}
Hence, $(V(S))_{S \in \Theta}$ is a $(\Theta, \rho)$-supermartingale family.

\end{proof}

\subsection{Reduction approach} \label{SubSection3-1}
We define the following two auxiliary problems: for each $S \in \Theta$,
\begin{equation} \label{TEXEQuation6666}
v_{1}(S) \coloneqq \esssup_{\tau_{1} \in \Theta_{S}} \rho_{S, \tau_{1}}[\psi(\tau_{1}, S)],
\end{equation}
and 
\begin{equation}
v_{2}(S) \coloneqq \esssup_{\tau_{2} \in \Theta_{S}} \rho_{S, \tau_{2}}[\psi(S, \tau_{2})].
\end{equation}
Since $\psi$ is bi-admissible, $(\psi(\tau_{1}, S))_{\tau_{1} \in \Theta_{S}}$ is admissible and $(\psi(S, \tau_{2}))_{\tau_{2} \in \Theta_{S}}$ is admissible. Moreover, $(\psi(\tau_{1}, S))_{\tau_{1} \in \Theta_{S}}$ is $p$-integrable and $(\psi(S, \tau_{2}))_{\tau_{2} \in \Theta_{S}}$ is $p$-integrable (as $(\psi(\tau_{1}, \tau_{2}))_{(\tau_{1}, \tau_{2}) \in \Theta \times \Theta}$ is $p$-integrable). Furthermore, by definition of $v_{1}, v_{2}$ and $V$, $\psi(S, S) \leq v_{1}(S) \leq V(S)$ a.s. for each $S \in \Theta$ and $\psi(S, S) \leq v_{2}(S) \leq V(S)$ a.s. for each $S \in \Theta$. Hence, by Assumption \ref{TEXAssumption3-111}, $v_{1}(S) \in L^{p}$, for each $S \in \Theta$ and $v_{2}(S) \in L^{p}$, for each $S \in \Theta$.\\
[0.2cm]
Hence, by the results of \cite{Grigorova-Quenez-Peng-1} on the single agent's non-linear optimal stopping problem, the value family $(v_{1}(S))_{S \in \Theta}$ and the value family $(v_{2}(S))_{S \in \Theta}$ are admissible under our assumptions on $\rho$, and we can apply the results of \cite{Grigorova-Quenez-Peng-1} to characterize these value families.\\
[0.2cm]
We define for each $\tau \in \Theta$, $\phi(\tau) \coloneqq v_{1}(\tau) \vee v_{2}(\tau)$. We now consider the optimal stopping problem with this auxiliary reward (or pay-off) family, that is, for each $S \in \Theta$, we consider
\begin{equation} \label{TEXEQuation8888}
u(S) \coloneqq \esssup_{\tau \in \Theta_{S}} \rho_{S, \tau}[\phi(\tau)].
\end{equation}

\begin{theorem}[Reduction] \label{TEXTheorem5-1}
For each $S \in \Theta$, $u(S) = V(S)$.

\end{theorem}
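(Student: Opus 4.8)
The plan is to prove the two inequalities $u(S) \le V(S)$ and $V(S) \le u(S)$ separately.

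For $u(S) \le V(S)$: fix $S \in \Theta$ and $\tau \in \Theta_S$. I want to show $\rho_{S,\tau}[\phi(\tau)] \le V(S)$. Since $\phi(\tau) = v_1(\tau) \vee v_2(\tau)$, and the set $B := \{v_1(\tau) \ge v_2(\tau)\}$ is $\mathcal F_\tau$-measurable, I would write $\phi(\tau) = \mathbbm 1_B v_1(\tau) + \mathbbm 1_{B^c} v_2(\tau)$. The key step is to recognize $v_1(\tau)$ and $v_2(\tau)$ as value functions of single-agent problems and use their maximising-sequence property (from \cite{Grigorova-Quenez-Peng-1}): there exist non-decreasing sequences $\rho_{\tau, \tau_1^n}[\psi(\tau_1^n, \tau)] \uparrow v_1(\tau)$ with $\tau_1^n \in \Theta_\tau$, and similarly $\rho_{\tau, \tau_2^n}[\psi(\tau, \tau_2^n)] \uparrow v_2(\tau)$. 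Then by monotonicity and the monotone Fatou property (vii) of $\rho_{S,\tau}$, together with consistency (v), I get $\rho_{S,\tau}[v_1(\tau)] \le \liminf_n \rho_{S,\tau}[\rho_{\tau,\tau_1^n}[\psi(\tau_1^n,\tau)]] = \liminf_n \rho_{S, \tau_1^n}[\psi(\tau_1^n, \tau)]$. Noting $\tau_1^n \vee \tau = \tau_1^n$ since $\tau_1^n \ge \tau$, each term equals $\rho_{S, \tau_1^n \vee \tau}[\psi(\tau_1^n, \tau)] \le V(S)$; similarly for $v_2$. To handle $\phi(\tau)$ rather than $v_1(\tau)$ or $v_2(\tau)$ alone, I would use the "generalized zero-one law" (vi) / Remark \ref{TEXREmark2} to localize on $B$ and $B^c$: build $\sigma_1^n = \tau_1^n \mathbbm 1_B + \tau \mathbbm 1_{B^c}$ and $\sigma_2^n = \tau \mathbbm 1_B + \tau_2^n \mathbbm 1_{B^c}$, which lie in $\Theta_\tau$, and check that $\rho_{S, \sigma_1^n \vee \sigma_2^n}[\psi(\sigma_1^n, \sigma_2^n)]$ agrees with $\rho_{S,\tau_1^n}[\psi(\tau_1^n,\tau)]$ on $B$ and with $\rho_{S,\tau_2^n}[\psi(\tau,\tau_2^n)]$ on $B^c$, hence converges to $\phi(\tau)$ after applying $\rho_{S,\tau}$ and passing to the limit. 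Taking essential supremum over $\tau \in \Theta_S$ gives $u(S) \le V(S)$.

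For $V(S) \le u(S)$: fix $(\tau_1, \tau_2) \in \Theta_S \times \Theta_S$ and set $\sigma := \tau_1 \wedge \tau_2 \in \Theta_S$. I want $\rho_{S, \tau_1 \vee \tau_2}[\psi(\tau_1, \tau_2)] \le u(S)$. Decompose $\Omega$ along the $\mathcal F_\sigma$-measurable set $C := \{\tau_1 \le \tau_2\}$. On $C$ we have $\sigma = \tau_1$ and $\tau_1 \vee \tau_2 = \tau_2$, so by consistency and property (vi ter), $\mathbbm 1_C \rho_{S, \tau_1 \vee \tau_2}[\psi(\tau_1,\tau_2)] = \mathbbm 1_C \rho_{S,\sigma}[\rho_{\sigma, \tau_1 \vee \tau_2}[\psi(\tau_1,\tau_2)]] = \mathbbm 1_C \rho_{S,\sigma}[\rho_{\sigma, \tau_2}[\psi(\sigma, \tau_2)]]$ (using $\tau_1 = \sigma$, $\tau_1 \vee \tau_2 = \tau_2$ on $C$ and (vi ter) to replace $\psi(\tau_1,\tau_2)$ by $\psi(\sigma,\tau_2)$ under the operator). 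Now $\rho_{\sigma,\tau_2}[\psi(\sigma,\tau_2)] \le v_2(\sigma) \le \phi(\sigma)$ by definition, and similarly on $C^c$ one gets $\rho_{\sigma,\tau_1}[\psi(\tau_1,\sigma)] \le v_1(\sigma) \le \phi(\sigma)$. Combining the two pieces via monotonicity of $\rho_{S,\sigma}$ and the admissibility/zero-one-law bookkeeping yields $\rho_{S, \tau_1 \vee \tau_2}[\psi(\tau_1,\tau_2)] \le \rho_{S,\sigma}[\phi(\sigma)] \le u(S)$, since $\sigma \in \Theta_S$. Taking essential supremum over $(\tau_1,\tau_2)$ gives $V(S) \le u(S)$.

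The main obstacle is the careful localization argument in both directions: everything hinges on splitting $\Omega$ along $\mathcal F_\sigma$- or $\mathcal F_\tau$-measurable sets and verifying that the ``patched'' stopping times stay in $\Theta_S$ (using closure of $\Theta$ under concatenation) and that the non-linear operators $\rho$ behave correctly on each piece. In the $V \le u$ direction the delicate point is the interchange of the roles of the two time indices — one must invoke the symmetric half of (vi ter) and (vi bis) to turn $\psi(\tau_1,\tau_2)$ into $\psi(\sigma, \tau_1 \vee \tau_2)$ or $\psi(\tau_1 \vee \tau_2, \sigma)$ under the operator with base point $\sigma$, and to ensure that the relevant auxiliary families (e.g. $(\tilde\phi(\tau_1)) = \rho_{\tau_1, \tau_1 \vee \tau_2}[\psi(\tau_1,\tau_2)]$) are admissible so that the generalized zero-one law applies. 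In the $u \le V$ direction the subtlety is combining the two maximising sequences for $v_1(\tau)$ and $v_2(\tau)$ into a single pair of sequences of Bermudan stopping times whose $\rho$-values converge to $\phi(\tau)$, which again requires Remark \ref{TEXREmark2} to control the behaviour on $B$ and $B^c$ simultaneously while keeping monotonicity so that the monotone Fatou property can be applied. The integrability hypotheses (Assumption \ref{TEXAssumption3-111} and the bounds $\psi(S,S) \le v_i(S) \le V(S)$) are what guarantee all the families involved are $p$-integrable so that the properties of $\rho$ are applicable throughout.
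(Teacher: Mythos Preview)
Your argument for $V(S) \le u(S)$ is essentially the paper's Step~2: consistency to factor through $\tau_1 \wedge \tau_2$, property (vi~ter) to split on $\{\tau_1 \le \tau_2\}$, then bound by $v_2$, $v_1$, and $\phi$. That direction matches.

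For $u(S) \le V(S)$ you take a genuinely different route. The paper argues abstractly: since $(V(S))$ is a $(\Theta,\rho)$-supermartingale (Proposition~\ref{TEXproposition2}) and $V(S) \ge v_1(S) \vee v_2(S) = \phi(S)$ trivially from the definitions, the Snell-envelope characterisation of $u$ from \cite{Grigorova-Quenez-Peng-1} immediately gives $V(S) \ge u(S)$. Your approach is more hands-on: you build explicit pairs $(\sigma_1^n,\sigma_2^n)$ whose $\rho$-evaluations approximate $\phi(\tau)$ from below, then use the monotone Fatou property (vii) and consistency to pass to the limit. This is valid and self-contained, avoiding the appeal to the Snell-envelope result, at the cost of more bookkeeping.

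One slip to fix: the set $B = \{v_1(\tau) \ge v_2(\tau)\}$ lies in $\mathcal F_\tau$, not $\mathcal F_S$, so you cannot directly claim that $\rho_{S,\sigma_1^n \vee \sigma_2^n}[\psi(\sigma_1^n,\sigma_2^n)]$ agrees with $\rho_{S,\tau_1^n}[\psi(\tau_1^n,\tau)]$ on $B$ via Remark~\ref{TEXREmark2}, which requires the localising set to be $\mathcal F_S$-measurable. The localisation must happen at level $\tau$: apply Remark~\ref{TEXREmark2} with base point $\tau$ (where $B$ is measurable) to get $\rho_{\tau,\sigma_1^n \vee \sigma_2^n}[\psi(\sigma_1^n,\sigma_2^n)] = \mathbbm 1_B \rho_{\tau,\tau_1^n}[\psi(\tau_1^n,\tau)] + \mathbbm 1_{B^c}\rho_{\tau,\tau_2^n}[\psi(\tau,\tau_2^n)] \uparrow \phi(\tau)$, and only then apply $\rho_{S,\tau}$ with (vii) and (v). Your final sentence suggests this is what you intend, but the intermediate claim as written is not justified.
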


\begin{proof}
\textbf{Step 1:} We will show that $(V(S))_{S \in \Theta}$ is a $(\Theta, \rho)$-supermartingale greater than or equal to $(\phi(S))_{S \in \Theta}$.\\
[0.2cm]
We have already checked that $(V(S))_{S \in \Theta}$ is a $(\Theta, \rho)$-supermartingale (cf. Proposition \ref{TEXproposition2}). By definition of $V(S)$, for any $\tau_{1} \in \Theta_{S}$, 
$$V(S) \geq \rho_{S, \tau_{1} \vee S}[\psi(\tau_{1}, S)] = \rho_{S, \tau_{1}}[\psi(\tau_{1}, S)].$$
Hence, 
$$V(S) \geq \esssup_{\tau_{1} \in \Theta_{S}}\rho_{S, \tau_{1}}[\psi(\tau_{1}, S)] = v_{1}(S).$$
A similar argument leads to $V(S) \geq v_{2}(S)$.\\
[0.2cm]
Hence, $V(S) \geq v_{1}(S) \vee v_{2}(S) = \phi(S)$.\\
[0.2cm]
Hence, by the $(\Theta, \rho)$-Snell envelope characterization of the value family $(u(S))_{S \in \Theta}$ of problem \eqref{TEXEQuation8888}, we have $V(S) \geq u(S)$.\\
[0.2cm]
\textbf{Step 2:} We will show the converse inequality.\\
[0.2cm]
Let $\tau_{1}, \tau_{2} \in \Theta_{S}$. We will show that $\rho_{S, \tau_{1} \vee \tau_{2}}[\psi(\tau_{1}, \tau_{2})] \leq \rho_{S, \tau_{1} \wedge \tau_{2}}[\phi(\tau_{1} \wedge \tau_{2})]$.\\
[0.2cm]
Let $A \coloneqq \{\tau_{1} \leq \tau_{2}\}$. By the consistency property of $\rho$, we have: 
\begin{align*}
\rho_{S, \tau_{1} \vee \tau_{2}}[\psi(\tau_{1}, \tau_{2})] &= \rho_{S, \tau_{1} \wedge \tau_{2}}[\rho_{\tau_{1} \wedge \tau_{2}, \tau_{1} \vee \tau_{2}}[\psi(\tau_{1}, \tau_{2})]]\\
&= \rho_{S, \tau_{1} \wedge \tau_{2}}[\mathbbm{1}_{A}\rho_{\tau_{1} \wedge \tau_{2}, \tau_{1} \vee \tau_{2}}[\psi(\tau_{1}, \tau_{2})] + \mathbbm{1}_{A^{c}}\rho_{\tau_{1} \wedge \tau_{2}, \tau_{1} \vee \tau_{2}}[\psi(\tau_{1}, \tau_{2})]].
\end{align*}
Now, by property (vi ter), we have:
$$\mathbbm{1}_{A}\rho_{\tau_{1} \wedge \tau_{2}, \tau_{1} \vee \tau_{2}}[\psi(\tau_{1}, \tau_{2})] = \mathbbm{1}_{A}\rho_{\tau_{1}, \tau_{2}}[\psi(\tau_{1}, \tau_{2})] \leq \mathbbm{1}_{A}v_{2}(\tau_{1}),$$
and 
$$\mathbbm{1}_{A^{c}}\rho_{\tau_{1} \wedge \tau_{2}, \tau_{1} \vee \tau_{2}}[\psi(\tau_{1}, \tau_{2})] = \mathbbm{1}_{A^{c}}\rho_{\tau_{2}, \tau_{1}}[\psi(\tau_{1}, \tau_{2})] \leq \mathbbm{1}_{A^{c}}v_{1}(\tau_{2}).$$
Hence, by the monotonicity of $\rho_{S, \tau_{1} \wedge \tau_{2}}$, we get:
\begin{equation}\label{TEXEquation9999}
\begin{aligned}
\rho_{S, \tau_{1} \vee \tau_{2}}[\psi(\tau_{1}, \tau_{2})] &\leq \rho_{S, \tau_{1} \wedge \tau_{2}}[\mathbbm{1}_{A}v_{2}(\tau_{1}) + \mathbbm{1}_{A^{c}}v_{1}(\tau_{2})]\\
&\leq \rho_{S, \tau_{1} \wedge \tau_{2}}[\mathbbm{1}_{A}\phi(\tau_{1}) + \mathbbm{1}_{A^{c}}\phi(\tau_{2})]\\
&= \rho_{S, \tau_{1} \wedge \tau_{2}}[\mathbbm{1}_{A}\phi(\tau_{1} \wedge \tau_{2}) + \mathbbm{1}_{A^{c}}\phi(\tau_{2} \wedge \tau_{2})] = \rho_{S, \tau_{1} \wedge \tau_{2}}[\phi(\tau_{1} \wedge \tau_{2})] \leq u(S).
\end{aligned}
\end{equation}
We have thus obtained $\rho_{S, \tau_{1} \vee \tau_{2}}[\psi(\tau_{1}, \tau_{2})] \leq u(S)$.\\
[0.2cm]
Hence, 
$$V(S) = \esssup_{(\tau_{1}, \tau_{2}) \in \Theta_{S} \times \Theta_{S}}\rho_{S, \tau_{1} \vee \tau_{2}}[\psi(\tau_{1}, \tau_{2})] \leq u(S).$$
Form Step 1 and Step 2 we conclude $V(S) = u(S)$.

\end{proof}
\noindent
We now provide a sufficient condition for optimality.
\begin{proposition} [Construction of optimal stopping times for $V(S)$. Sufficient condition] \label{sufficient-condition}
We assume that:\\
[0.2cm]
i) $\theta^{*} \in \Theta_{S}$ is optimal for the problem with value $u(S)$,\\
[0.2cm]
ii) $\theta_{2}^{*} \in \Theta_{\theta^{*}}$ is optimal for the problem with value $v_{2}(\theta^{*})$,\\
[0.2cm]
iii) $\theta_{1}^{*} \in \Theta_{\theta^{*}}$ is optimal for the problem with value $v_{1}(\theta^{*})$.\\
[0.2cm]
Let $B \coloneqq \{v_{1}(\theta^{*}) \leq v_{2}(\theta^{*})\}$. Then, the pair $(\tau_{1}^{*}, \tau_{2}^{*})$ defined by:\\
[0.2cm]
$\tau_{1}^{*} \coloneqq \theta^{*}\mathbbm{1}_{B} + \theta_{1}^{*}\mathbbm{1}_{B^{c}}$, and $\tau_{2}^{*} \coloneqq \theta_{2}^{*}\mathbbm{1}_{B} + \theta^{*}\mathbbm{1}_{B^{c}}$ is optimal for $V(S)$.

\end{proposition}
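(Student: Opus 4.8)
The plan is to verify that the pair $(\tau_1^*,\tau_2^*)$ achieves the value $V(S)=u(S)$ by computing $\rho_{S,\tau_1^*\vee\tau_2^*}[\psi(\tau_1^*,\tau_2^*)]$ explicitly on the sets $B$ and $B^c$ and showing it equals $u(S)$. The starting observation is that on $B$ we have $\tau_1^*=\theta^*$ and $\tau_2^*=\theta_2^*\geq\theta^*$, so $\tau_1^*\wedge\tau_2^*=\theta^*$ and $\tau_1^*\vee\tau_2^*=\theta_2^*$ on $B$; symmetrically, on $B^c$ we have $\tau_2^*=\theta^*$ and $\tau_1^*=\theta_1^*\geq\theta^*$, so $\tau_1^*\wedge\tau_2^*=\theta^*$ and $\tau_1^*\vee\tau_2^*=\theta_1^*$ on $B^c$. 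In particular $\tau_1^*\wedge\tau_2^*=\theta^*$ a.s.\ (on all of $\Omega$). Note also that $(\tau_1^*,\tau_2^*)\in\Theta_S\times\Theta_S$ because $\Theta$ is closed under concatenation of elements of $\Theta_{\theta^*}\subseteq\Theta_S$ along the set $B\in\mathcal F_{\theta^*}$.

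Next I would use the consistency property of $\rho$ to split at the stopping time $\theta^*=\tau_1^*\wedge\tau_2^*$:
\begin{align*}
\rho_{S,\tau_1^*\vee\tau_2^*}[\psi(\tau_1^*,\tau_2^*)] &= \rho_{S,\theta^*}\big[\rho_{\theta^*,\tau_1^*\vee\tau_2^*}[\psi(\tau_1^*,\tau_2^*)]\big].
\end{align*}
Then I would evaluate the inner term $\rho_{\theta^*,\tau_1^*\vee\tau_2^*}[\psi(\tau_1^*,\tau_2^*)]$ on $B$ and on $B^c$ separately, using property (vi ter) (the analogue of the generalized zero-one law for the reduction step, exactly as in Step 2 of the proof of Theorem \ref{TEXTheorem5-1}). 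On $B$, since $\theta^*=\tau_1^*\wedge\tau_2^*=\tau_1^*$ and $\tau_1^*\vee\tau_2^*=\tau_2^*=\theta_2^*$, property (vi ter) gives $\mathbbm 1_B\rho_{\theta^*,\tau_1^*\vee\tau_2^*}[\psi(\tau_1^*,\tau_2^*)]=\mathbbm 1_B\rho_{\tau_1^*,\tau_2^*}[\psi(\tau_1^*,\tau_2^*)]=\mathbbm 1_B\rho_{\theta^*,\theta_2^*}[\psi(\theta^*,\theta_2^*)]$, which by optimality of $\theta_2^*$ for $v_2(\theta^*)$ equals $\mathbbm 1_B v_2(\theta^*)=\mathbbm 1_B\phi(\theta^*)$ (using the definition of $B$ and $\phi=v_1\vee v_2$). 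Symmetrically, on $B^c$, using the second half of property (vi ter) with the roles of the two indices exchanged, $\mathbbm 1_{B^c}\rho_{\theta^*,\tau_1^*\vee\tau_2^*}[\psi(\tau_1^*,\tau_2^*)]=\mathbbm 1_{B^c}\rho_{\theta^*,\theta_1^*}[\psi(\theta_1^*,\theta^*)]=\mathbbm 1_{B^c}v_1(\theta^*)=\mathbbm 1_{B^c}\phi(\theta^*)$, by optimality of $\theta_1^*$ for $v_1(\theta^*)$ and the fact that on $B^c$ we have $v_1(\theta^*)\geq v_2(\theta^*)$. Adding the two pieces yields $\rho_{\theta^*,\tau_1^*\vee\tau_2^*}[\psi(\tau_1^*,\tau_2^*)]=\phi(\theta^*)$.

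Finally, plugging back, $\rho_{S,\tau_1^*\vee\tau_2^*}[\psi(\tau_1^*,\tau_2^*)]=\rho_{S,\theta^*}[\phi(\theta^*)]=u(S)$, where the last equality uses that $\theta^*$ is optimal for the (single) stopping problem with value $u(S)$. Combined with the reduction identity $u(S)=V(S)$ of Theorem \ref{TEXTheorem5-1} and the trivial inequality $\rho_{S,\tau_1^*\vee\tau_2^*}[\psi(\tau_1^*,\tau_2^*)]\leq V(S)$ from the definition of $V(S)$, this shows the pair $(\tau_1^*,\tau_2^*)$ attains $V(S)$, i.e.\ it is optimal. The main obstacle — really the only delicate point — is the careful bookkeeping of which time index is the minimum and which is the maximum on $B$ versus $B^c$, so that property (vi ter) is invoked in the correct orientation on each set; once the sets are pinned down, everything reduces to the consistency property and the assumed optimality of $\theta^*$, $\theta_1^*$, $\theta_2^*$. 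One should also double-check the edge behaviour where $\theta_1^*=\theta^*$ or $\theta_2^*=\theta^*$ on a subset, but this is harmless since property (vi ter) and knowledge preservation (property (iii)) handle the degenerate case $\tau_1^*\wedge\tau_2^*=\tau_1^*\vee\tau_2^*$ consistently.
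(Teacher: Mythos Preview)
Your proof is correct and follows essentially the same approach as the paper's: identify $\tau_1^*\wedge\tau_2^*=\theta^*$, use consistency to split at $\theta^*$, invoke property (vi~ter) on $B$ and $B^c$ to reduce the inner term to $v_2(\theta^*)$ and $v_1(\theta^*)$ respectively, recognise the result as $\phi(\theta^*)$, and then apply the optimality of $\theta^*$ for $u(S)$ together with Theorem~\ref{TEXTheorem5-1}. The only cosmetic difference is the direction of the computation: the paper starts from $u(S)=\rho_{S,\theta^*}[\phi(\theta^*)]$ and works forward to $\rho_{S,\tau_1^*\vee\tau_2^*}[\psi(\tau_1^*,\tau_2^*)]$, while you start from the latter and work back to $u(S)$.
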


\begin{proof}
We have, $\tau_{1}^{*} \wedge \tau_{2}^{*} = \theta^{*}$. On $B$, $\tau_{1}^{*} \wedge \tau_{2}^{*} = \theta^{*} = \tau_{1}^{*}$, and $\tau_{1}^{*} \vee \tau_{2}^{*} = \theta_{2}^{*} = \tau_{2}^{*}$. Moreover, on $B^{c}$, $\tau_{1}^{*} \wedge \tau_{2}^{*} = \theta^{*} = \tau_{2}^{*}$, and $\tau_{1}^{*} \vee \tau_{2}^{*} = \theta_{1}^{*} = \tau_{1}^{*}$. We have, by i), and by definition of $\phi$,
\begin{align*}
u(S) = \rho_{S, \theta^{*}}[\phi(\theta^{*})] &= \rho_{S, \tau_{1}^{*} \wedge \tau_{2}^{*}}[\phi(\tau_{1}^{*} \wedge \tau_{2}^{*})]\\
&= \rho_{S, \tau_{1}^{*} \wedge \tau_{2}^{*}}[v_{1}(\tau_{1}^{*} \wedge \tau_{2}^{*}) \vee v_{2}(\tau_{1}^{*} \wedge \tau_{2}^{*})].
\end{align*}
Hence,
\begin{align*}
u(S) &= \rho_{S, \tau_{1}^{*} \wedge \tau_{2}^{*}}[\mathbbm{1}_{B}v_{2}(\tau_{1}^{*} \wedge \tau_{2}^{*}) + \mathbbm{1}_{B^{c}}v_{1}(\tau_{1}^{*} \wedge \tau_{2}^{*})]\\
&= \rho_{S, \tau_{1}^{*} \wedge \tau_{2}^{*}}[\mathbbm{1}_{B}v_{2}(\tau_{1}^{*}) + \mathbbm{1}_{B^{c}}v_{1}(\tau_{2}^{*})]\\
&= \rho_{S, \tau_{1}^{*} \wedge \tau_{2}^{*}}[\mathbbm{1}_{B}v_{2}(\theta^{*}) + \mathbbm{1}_{B^{c}}v_{1}(\theta^{*})]\\
&= \rho_{S, \tau_{1}^{*} \wedge \tau_{2}^{*}}[\mathbbm{1}_{B}\rho_{\theta^{*}, \theta_{2}^{*}}[\psi(\theta^{*}, \theta_{2}^{*})] + \mathbbm{1}_{B^{c}}\rho_{\theta^{*}, \theta_{1}^{*}}[\psi(\theta_{1}^{*}, \theta^{*})]],
\end{align*}
where we have used the optimality of $\theta_{2}^{*}$ and $\theta_{1}^{*}$ from ii) and iii).\\
[0.2cm]
Now, by property (vi ter) of $\rho$ we have:
\begin{align*}
\mathbbm{1}_{B}\rho_{\theta^{*}, \theta_{2}^{*}}[\psi(\theta^{*}, \theta_{2}^{*})] = \mathbbm{1}_{B}\rho_{\tau_{1}^{*} \wedge \tau_{2}^{*}, \tau_{1}^{*} \vee \tau_{2}^{*}}[\psi(\tau_{1}^{*}, \tau_{2}^{*})],
\end{align*}
and 
\begin{align*}
\mathbbm{1}_{B^{c}}\rho_{\theta^{*}, \theta_{1}^{*}}[\psi(\theta_{1}^{*}, \theta^{*})] = \mathbbm{1}_{B^{c}}\rho_{\tau_{1}^{*} \wedge \tau_{2}^{*}, \tau_{1}^{*} \vee \tau_{2}^{*}}[\psi(\tau_{1}^{*}, \tau_{2}^{*})].
\end{align*}
Hence, we get 
\begin{align*}
u(S) &= \rho_{S, \tau_{1}^{*} \wedge \tau_{2}^{*}}[\mathbbm{1}_{B}\rho_{\tau_{1}^{*} \wedge \tau_{2}^{*}, \tau_{1}^{*} \vee \tau_{2}^{*}}[\psi(\tau_{1}^{*}, \tau_{2}^{*})] + \mathbbm{1}_{B^{c}}\rho_{\tau_{1}^{*} \wedge \tau_{2}^{*}, \tau_{1}^{*} \vee \tau_{2}^{*}}[\psi(\tau_{1}^{*}, \tau_{2}^{*})]]\\
&= \rho_{S, \tau_{1}^{*} \wedge \tau_{2}^{*}}[\rho_{\tau_{1}^{*} \wedge \tau_{2}^{*}, \tau_{1}^{*} \vee \tau_{2}^{*}}[\psi(\tau_{1}^{*}, \tau_{2}^{*})]]\\
&= \rho_{S, \tau_{1}^{*} \vee \tau_{2}^{*}}[\psi(\tau_{1}^{*}, \tau_{2}^{*})],
\end{align*}
where we have used the time consistency property of $\rho$ for the last equality.\\
[0.2cm]
From this together with Theorem \ref{TEXTheorem5-1}, we get
$$V(S) = u(S) = \rho_{S, \tau_{1}^{*} \vee \tau_{2}^{*}}[\psi(\tau_{1}^{*}, \tau_{2}^{*})],$$
which shows the optimality of the pair $(\tau_{1}^{*}, \tau_{2}^{*})$ for the problem with value $V(S)$.

\end{proof}

\begin{proposition}[A necessary condition for optimality]\label{double-necessary-condition} Let $S \in \Theta$. Suppose that a given pair $(\tau_{1}^{*}, \tau_{2}^{*})$ is optimal for $V(S)$. Let $A \coloneqq \{\tau_{1}^{*} \leq \tau_{2}^{*}\}$. Then, the following holds:\\
[0.2cm]
i) $\tau_{1}^{*} \wedge \tau_{2}^{*}$ is optimal for the problem with value $u(S)$.\\
[0.2cm]
Moreover, if $\rho$ is \textbf{strictly monotone}, then:\\
[0.2cm]
ii) $\tau_{2}^{*}$ is optimal for the problem with value $v_{2}(\tau_{1}^{*})$ a.s. on $A$.\\
[0.2cm]
iii) $\tau_{1}^{*}$ is optimal for the problem with value $v_{1}(\tau_{2}^{*})$ a.s. on $A^{c}$.

\end{proposition}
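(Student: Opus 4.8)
The idea is to reverse, step by step, the two key inequalities established in the proof of Theorem \ref{TEXTheorem5-1} (the reduction), using optimality of $(\tau_1^*, \tau_2^*)$ to force equalities, and then to peel off the outer operator $\rho_{S, \tau_1^* \wedge \tau_2^*}$ via strict monotonicity.

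First I would prove (i). By optimality, $V(S) = \rho_{S, \tau_1^* \vee \tau_2^*}[\psi(\tau_1^*, \tau_2^*)]$, and by Theorem \ref{TEXTheorem5-1}, $V(S) = u(S)$. Running the chain of inequalities \eqref{TEXEquation9999} with the pair $(\tau_1^*, \tau_2^*)$ gives
$$u(S) = \rho_{S, \tau_1^* \vee \tau_2^*}[\psi(\tau_1^*, \tau_2^*)] \leq \rho_{S, \tau_1^* \wedge \tau_2^*}[\phi(\tau_1^* \wedge \tau_2^*)] \leq u(S),$$
so both inequalities are in fact equalities; the last one says exactly that $\tau_1^* \wedge \tau_2^*$ attains $u(S)$, i.e. $\tau_1^* \wedge \tau_2^*$ is optimal for the problem with value $u(S)$. (Note that $\tau_1^* \wedge \tau_2^* \in \Theta_S$ since $\Theta_S$ is closed under pairwise minimization.)

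Next, for (ii) and (iii), I would exploit that the \emph{first} inequality in \eqref{TEXEquation9999} is also an equality. Recall that, with $A = \{\tau_1^* \leq \tau_2^*\}$, property (vi ter) gives $\mathbbm{1}_A \rho_{\tau_1^* \wedge \tau_2^*, \tau_1^* \vee \tau_2^*}[\psi(\tau_1^*, \tau_2^*)] = \mathbbm{1}_A \rho_{\tau_1^*, \tau_2^*}[\psi(\tau_1^*, \tau_2^*)] \leq \mathbbm{1}_A v_2(\tau_1^*)$ and symmetrically on $A^c$ with $v_1(\tau_2^*)$. Substituting and using consistency, the established equality $\rho_{S, \tau_1^* \vee \tau_2^*}[\psi(\tau_1^*, \tau_2^*)] = \rho_{S, \tau_1^* \wedge \tau_2^*}[\mathbbm{1}_A v_2(\tau_1^*) + \mathbbm{1}_{A^c} v_1(\tau_2^*)]$ rewrites as
$$\rho_{S, \tau_1^* \wedge \tau_2^*}\bigl[\mathbbm{1}_A \rho_{\tau_1^*, \tau_2^*}[\psi(\tau_1^*, \tau_2^*)] + \mathbbm{1}_{A^c} \rho_{\tau_2^*, \tau_1^*}[\psi(\tau_1^*, \tau_2^*)]\bigr] = \rho_{S, \tau_1^* \wedge \tau_2^*}\bigl[\mathbbm{1}_A v_2(\tau_1^*) + \mathbbm{1}_{A^c} v_1(\tau_2^*)\bigr].$$
Since the argument on the left is pointwise $\leq$ the argument on the right (by the two inequalities above), strict monotonicity of $\rho_{S, \tau_1^* \wedge \tau_2^*}$ forces the arguments to be equal a.s.; restricting to $A$ (resp. $A^c$) and using that $A, A^c$ are $\mathcal{F}_{\tau_1^* \wedge \tau_2^*}$-measurable yields $\mathbbm{1}_A \rho_{\tau_1^*, \tau_2^*}[\psi(\tau_1^*, \tau_2^*)] = \mathbbm{1}_A v_2(\tau_1^*)$ and $\mathbbm{1}_{A^c} \rho_{\tau_2^*, \tau_1^*}[\psi(\tau_1^*, \tau_2^*)] = \mathbbm{1}_{A^c} v_1(\tau_2^*)$. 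On $A$ we have $\tau_1^* \leq \tau_2^*$, so $\rho_{\tau_1^*, \tau_2^*}[\psi(\tau_1^*, \tau_2^*)]$ is of the form $\rho_{\tau_1^*, \tau_2^*}[\psi(\tau_1^*, \cdot)]$ with $\tau_2^* \in \Theta_{\tau_1^*}$ (after the usual concatenation argument to stay in the right index sets), and $v_2(\tau_1^*) = \esssup_{\tau \in \Theta_{\tau_1^*}} \rho_{\tau_1^*, \tau}[\psi(\tau_1^*, \tau)]$; the equality a.s. on $A$ says precisely that $\tau_2^*$ attains this essential supremum a.s. on $A$, which is (ii). Statement (iii) is obtained symmetrically on $A^c$.

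The main obstacle I anticipate is the bookkeeping needed to pass rigorously from the "on $A$" equalities to genuine optimality statements for the \emph{single-agent} problems $v_2(\tau_1^*)$ and $v_1(\tau_2^*)$: one must check that the relevant stopping times, once restricted to $A$ or $A^c$ and concatenated with $T$ off those sets, lie in $\Theta_{\tau_1^*}$ resp. $\Theta_{\tau_2^*}$, and invoke the "generalized zero-one law" (vi)/(vi ter) together with Remark \ref{TEXREmark2} so that the reward families $(\psi(\tau_1^*, \tau))$ and $(\psi(\tau, \tau_2^*))$ are the correct admissible families on those events. The strict-monotonicity step itself is routine once the preceding equality is in hand; the care is all in the measurable-set manipulations and in confirming that $\tau_1^* \wedge \tau_2^* = \tau_1^*$ a.s. on $A$ (resp. $= \tau_2^*$ a.s. on $A^c$), which is what makes the indices match up.
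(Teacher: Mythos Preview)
Your proposal is correct and follows essentially the same approach as the paper: you run the chain of inequalities \eqref{TEXEquation9999} from Step~2 of Theorem~\ref{TEXTheorem5-1} with $(\tau_1^*,\tau_2^*)$, use optimality together with $V(S)=u(S)$ to force all inequalities to equalities (yielding (i)), and then invoke strict monotonicity of $\rho_{S,\tau_1^*\wedge\tau_2^*}$ on the inner arguments to obtain the pointwise equality that gives (ii) and (iii) on $A$ and $A^c$. Your additional remarks on concatenation and measurability are more explicit than the paper's own proof, but the argument is the same.
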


\begin{proof}
We have: 
\begin{equation}\label{TEXEquation1010}
V(S) = \rho_{S, \tau_{1}^{*} \vee \tau_{2}^{*}}[\psi(\tau_{1}^{*}, \tau_{2}^{*})] = u(S).
\end{equation}
By following the same arguments as in \textbf{Step 2} of the proof of Theorem \ref{TEXTheorem5-1}, with $\tau_{1}^{*}$ in place of $\tau_{1}$, and $\tau_{2}^{*}$ in place of $\tau_{2}$, we get:
\begin{align*}
\rho_{S, \tau_{1}^{*} \vee \tau_{2}^{*}}[\psi(\tau_{1}^{*}, \tau_{2}^{*})] &= \rho_{S, \tau_{1}^{*} \wedge \tau_{2}^{*}}[\mathbbm{1}_{A}\rho_{\tau_{1}^{*}, \tau_{2}^{*}}[\psi(\tau_{1}^{*}, \tau_{2}^{*})] + \mathbbm{1}_{A^{c}}\rho_{\tau_{2}^{*}, \tau_{1}^{*}}[\psi(\tau_{1}^{*}, \tau_{2}^{*})]]\\
&\leq \rho_{S, \tau_{1}^{*} \wedge \tau_{2}^{*}}[\mathbbm{1}_{A}v_{2}(\tau_{1}^{*}) + \mathbbm{1}_{A^{c}}v_{1}(\tau_{2}^{*})]\\
&\leq \rho_{S, \tau_{1}^{*} \wedge \tau_{2}^{*}}[\phi(\tau_{1}^{*} \wedge \tau_{2}^{*})] \leq u(S).
\end{align*}
Now, by Eq. \eqref{TEXEquation1010}, for the LHS, it holds: $\rho_{S, \tau_{1}^{*} \vee \tau_{2}^{*}}[\psi(\tau_{1}^{*}, \tau_{2}^{*})] = u(S)$. Hence, all the inequalities in the above computation are equalities.\\
[0.2cm]
Hence, $\tau_{1}^{*} \wedge \tau_{2}^{*}$ is optimal for $u(S)$. Moreover,
$$\rho_{S, \tau_{1}^{*} \wedge \tau_{2}^{*}}[\mathbbm{1}_{A}\rho_{\tau_{1}^{*}, \tau_{2}^{*}}[\psi(\tau_{1}^{*}, \tau_{2}^{*})] + \mathbbm{1}_{A^{c}}\rho_{\tau_{2}^{*}, \tau_{1}^{*}}[\psi(\tau_{1}^{*}, \tau_{2}^{*})]] = \rho_{S, \tau_{1}^{*} \wedge \tau_{2}^{*}}[\mathbbm{1}_{A}v_{2}(\tau_{1}^{*}) + \mathbbm{1}_{A^{c}}v_{1}(\tau_{2}^{*})].$$
Furthermore, by definition of $v_{2}(\tau_{1}^{*})$ and $v_{1}(\tau_{2}^{*})$, we have:
$$\mathbbm{1}_{A}\rho_{\tau_{1}^{*}, \tau_{2}^{*}}[\psi(\tau_{1}^{*}, \tau_{2}^{*})] + \mathbbm{1}_{A^{c}}\rho_{\tau_{2}^{*}, \tau_{1}^{*}}[\psi(\tau_{1}^{*}, \tau_{2}^{*})] \leq \mathbbm{1}_{A}v_{2}(\tau_{1}^{*}) + \mathbbm{1}_{A^{c}}v_{1}(\tau_{2}^{*}).$$
Hence, under the additional assumption that $\rho_{S, \tau_{1}^{*} \wedge \tau_{2}^{*}}$ is \textbf{strictly monotone}, we get
$$\mathbbm{1}_{A}\rho_{\tau_{1}^{*}, \tau_{2}^{*}}[\psi(\tau_{1}^{*}, \tau_{2}^{*})] + \mathbbm{1}_{A^{c}}\rho_{\tau_{2}^{*}, \tau_{1}^{*}}[\psi(\tau_{1}^{*}, \tau_{2}^{*})] = \mathbbm{1}_{A}v_{2}(\tau_{1}^{*}) + \mathbbm{1}_{A^{c}}v_{1}(\tau_{2}^{*}).$$
Hence, statements ii) and iii) hold true.

\end{proof}

\subsubsection{Existence of an optimal pair $(\tau_{1}^{*}, \tau_{2}^{*})$ for $V(S)$}
Let $S \in \Theta$. We define $\theta^{*}$ by $\theta^{*} \coloneqq \essinf\{\tau \geq S: u(\tau) = \phi(\tau) a.s.\}$, and $\theta_{1}^{*} \coloneqq \essinf\{\tau \geq \theta^{*}: v_{1}(\tau) = \psi(\tau, \theta^{*})\}$, $\theta_{2}^{*} \coloneqq \essinf\{\tau \geq \theta^{*}: v_{2}(\tau) = \psi(\theta^{*}, \tau)\}$.

\begin{theorem}[Existence of an optimal pair]\label{Existence-of-an-optimal-pair}
We assume that $\rho$ satisfies conditions (i) - (ix). Under Assumption \ref{TEXAssumption3-111}, the pair $(\tau_{1}^{*}, \tau_{2}^{*})$ defined by $\tau_{1}^{*} \coloneqq \theta^{*}\mathbbm{1}_{B} + \theta_{1}^{*}\mathbbm{1}_{B^{c}}$, and $\tau_{2}^{*} \coloneqq \theta_{2}^{*}\mathbbm{1}_{B} + \theta^{*}\mathbbm{1}_{B^{c}}$, where $B \coloneqq \{v_{1}(\theta^{*}) \leq v_{2}(\theta^{*})\}$, is an optimal pair for $V(S)$. 

\end{theorem}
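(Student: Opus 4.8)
The plan is to verify, in order, the three hypotheses of Proposition \ref{sufficient-condition} for the pair $(\tau_{1}^{*}, \tau_{2}^{*})$ constructed in the statement, and then invoke that proposition directly. So the work reduces to establishing: (i) $\theta^{*}$ is optimal for $u(S)$; (ii) $\theta_{2}^{*}$ is optimal for $v_{2}(\theta^{*})$; and (iii) $\theta_{1}^{*}$ is optimal for $v_{1}(\theta^{*})$. Each of these is a statement about the \emph{single}-agent non-linear optimal stopping problem — $u(S)$ with reward family $\phi$, $v_{1}(\theta^{*})$ with reward family $(\psi(\tau,\theta^{*}))_{\tau\in\Theta_{\theta^{*}}}$, and $v_{2}(\theta^{*})$ with reward family $(\psi(\theta^{*},\tau))_{\tau\in\Theta_{\theta^{*}}}$ — so each should follow from the existence theorem of \cite{Grigorova-Quenez-Peng-1} for the Bermudan single optimal stopping problem, provided the hypotheses of that theorem are met and the relevant stopping time is shown to coincide with the one defined via the $\essinf$ hitting-time construction (namely $\theta^{*}$, $\theta_{1}^{*}$, $\theta_{2}^{*}$ respectively).

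First I would check that the reward family $\phi(\tau)=v_{1}(\tau)\vee v_{2}(\tau)$ inherits the regularity needed to apply the single-agent existence result to problem \eqref{TEXEQuation8888}: $\phi$ is admissible and $p$-integrable (already noted in Subsection \ref{SubSection3-1}, since $\psi(S,S)\le v_i(S)\le V(S)\in L^p$), and the key point is that $\phi$ satisfies the left-upper-semicontinuity along Bermudan stopping times required by property (viii)-type hypotheses of \cite{Grigorova-Quenez-Peng-1}. This should follow because $v_{1}$ and $v_{2}$ are themselves $(\Theta,\rho)$-Snell envelopes of families that are LUSC along Bermudan stopping times (here one uses (viii) and (ix) on $\rho$ together with the bi-admissibility of $\psi$), hence $v_{1}$, $v_{2}$, and their maximum $\phi$ are LUSC along Bermudan stopping times — this is exactly the class of properties preserved by the Snell-envelope construction in that reference. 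Granting this, the existence theorem of \cite{Grigorova-Quenez-Peng-1} gives that $\theta^{*}=\essinf\{\tau\ge S: u(\tau)=\phi(\tau)\}$ is optimal for $u(S)$, establishing hypothesis (i).

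Next, for hypotheses (ii) and (iii): once $\theta^{*}$ is fixed, the families $(\psi(\theta^{*},\tau))_{\tau\in\Theta_{\theta^{*}}}$ and $(\psi(\tau,\theta^{*}))_{\tau\in\Theta_{\theta^{*}}}$ are admissible, $p$-integrable, and LUSC along Bermudan stopping times (the LUSC being where property (viii) for $\rho$ and the regularity of $\psi$ enter); so the same single-agent existence theorem yields that $\theta_{2}^{*}=\essinf\{\tau\ge\theta^{*}: v_{2}(\tau)=\psi(\theta^{*},\tau)\}$ is optimal for $v_{2}(\theta^{*})$ and $\theta_{1}^{*}=\essinf\{\tau\ge\theta^{*}: v_{1}(\tau)=\psi(\tau,\theta^{*})\}$ is optimal for $v_{1}(\theta^{*})$. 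With (i), (ii), (iii) in hand, Proposition \ref{sufficient-condition} applies verbatim and gives that $(\tau_{1}^{*},\tau_{2}^{*})$ with $\tau_{1}^{*}=\theta^{*}\mathbbm{1}_{B}+\theta_{1}^{*}\mathbbm{1}_{B^{c}}$, $\tau_{2}^{*}=\theta_{2}^{*}\mathbbm{1}_{B}+\theta^{*}\mathbbm{1}_{B^{c}}$, $B=\{v_{1}(\theta^{*})\le v_{2}(\theta^{*})\}$, is optimal for $V(S)$, which is the claim.

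The main obstacle is the regularity propagation in the reduction step: one must confirm that $\phi=v_{1}\vee v_{2}$ is left-upper-semicontinuous along Bermudan stopping times (and similarly that the sliced families $\psi(\cdot,\theta^{*})$, $\psi(\theta^{*},\cdot)$ are), so that the single-agent existence machinery of \cite{Grigorova-Quenez-Peng-1} is genuinely applicable. This requires checking that the value families $v_{1}$, $v_{2}$ — themselves obtained as $(\Theta,\rho)$-Snell envelopes — are LUSC along Bermudan stopping times, which should be a consequence of properties (viii) and (ix) of $\rho$ combined with the admissibility/integrability of $\psi$, but it is the step that needs the careful argument rather than a direct citation; the maximum of two such families is then again LUSC, and the $\essinf$ hitting times are well-defined and optimal by the cited existence result.
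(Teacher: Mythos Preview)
Your approach is essentially the same as the paper's: verify the three hypotheses of Proposition \ref{sufficient-condition} by applying the single-agent existence result (Theorem 2.3 and Proposition 2.3 of \cite{Grigorova-Quenez-Peng-1}) to each of $u(S)$, $v_{1}(\theta^{*})$, $v_{2}(\theta^{*})$, then invoke that proposition. One clarification on the point you flag as the main obstacle: the paper does \emph{not} derive the required LUSC of the reward families from properties (i)--(ix) of $\rho$ and bi-admissibility of $\psi$ alone; instead it introduces an additional explicit Assumption on $\psi$ (namely $\limsup_{n}\psi(\theta_{n},\theta^{*})\le\psi(T,\theta^{*})$ and $\limsup_{n}\psi(\theta^{*},\theta_{n})\le\psi(\theta^{*},T)$), and then uses Lemma 2.9 of \cite{Grigorova-Quenez-Peng-1} (recorded in Remark \ref{remarkfortheorem2}) to propagate this to $v_{1}$, $v_{2}$ and hence to $\phi=v_{1}\vee v_{2}$. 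So your instinct that this step ``needs the careful argument rather than a direct citation'' is right, but the resolution in the paper is by hypothesis on $\psi$, not by squeezing it out of properties of $\rho$.
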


\noindent
The idea of the proof is to use the result on existence of optimal stopping problems for the \textbf{single} non-linear optimal stopping problem (cf. Theorem 2.3 in \cite{Grigorova-Quenez-Peng-1}).\\
[0.2cm]
To simplify the presentation and to ensure that the value families of the relevant single optimal stopping problems are LUSC along Bermudan stopping times (cf. Assumption 2.2 in \cite{Grigorova-Quenez-Peng-1} and Proposition 2.3 in \cite{Grigorova-Quenez-Peng-1}), we will assume that the bi-admissible pay-off family $\psi$ satisfies: 

\begin{assumption}
i) $\limsup_{n \to +\infty}\phi(\theta_{n}) \leq \phi(T)$,\\
[0.2cm] 
ii) $\limsup_{n \to +\infty}\psi(\theta, \theta^{*}) \leq \psi(T, \theta^{*})$,\\
[0.2cm] 
iii) $\limsup_{n \to +\infty}\psi(\theta^{*}, \theta_{n}) \leq \psi(\theta^{*}, T)$.

\end{assumption}

\begin{remark}\label{remarkfortheorem2}
When ii) is satisfied, then, by Lemma 2.9 in \cite{Grigorova-Quenez-Peng-1}, we have:
$$\limsup_{n \to +\infty}v_{1}(\theta_{n}) \leq v_{1}(T).$$
When iii) is satisfied, then by Lemma 2.9 in \cite{Grigorova-Quenez-Peng-1}, we have:
$$\limsup_{n \to +\infty}v_{2}(\theta_{n}) \leq v_{2}(T).$$
When ii) and iii) are satisfied, we have:
\begin{align*}
\limsup_{n \to +\infty}\phi(\theta_{n}) &= \limsup_{n \to +\infty}(v_{1}(\theta_{n}) \vee v_{2}(\theta_{n}))\\ 
&\leq \limsup_{n \to +\infty}v_{1}(\theta_{n}) \vee \limsup_{n \to +\infty}v_{2}(\theta_{n}) \leq v_{1}(T) \vee v_{2}(T) = \phi(T).
\end{align*}
Hence, if ii) and iii) are satisfied, then, i) is also satisfied.

\end{remark}

\begin{proof}[Proof of Theorem \ref{Existence-of-an-optimal-pair}]
By Theorem 2.3 and Proposition 2.3 in \cite{Grigorova-Quenez-Peng-1}, the stopping time $\theta_{1}^{*}$ is optimal for the problem with value $v_{1}(\theta^{*})$, and $\theta_{2}^{*}$ is optimal for the problem with value $v_{2}(\theta^{*})$. Moreover, $\theta^{*}$ is optimal for the problem with value $u(S)$.\\
[0.2cm]
Hence, by the sufficient condition for optimality from Proposition \ref{sufficient-condition}, the pair $(\tau_{1}^{*}, \tau_{2}^{*})$ is optimal for $V(S)$.

\end{proof}

\section{Non-linear optimal $d$-stopping problem} \label{Section4}
Let $d \in \mathbbm{N}$ with $d \geq 2$. We consider the following non-linear $d$-stopping problem.\\
[0.3cm]
For $S \in \Theta$ given, 
\begin{equation}
V(S) \coloneqq \esssup_{(\tau_{1}, \tau_{2}, ..., \tau_{d}) \in \Theta_{S}^{d}} \rho_{S, \tau_{1} \vee \tau_{2} \vee ... \vee \tau_{d}}[\psi(\tau_{1}, \tau_{2}, ..., \tau_{d})].
\end{equation}

\begin{definition} \label{d-bi-admissible}
Let $(\psi(\tau_{1}, \tau_{2}, ..., \tau_{d}))_{(\tau_{1}, \tau_{2}, ..., \tau_{d}) \in \Theta^{d}}$ be a family of random variables indexed by $d$ Bermudan stopping times. We say that the family $\psi$ is \textbf{$d$-admissible} if it satisfies the following properties:
\begin{itemize}
\item[(a)] For each $(\tau_{1}, \tau_{2}, ..., \tau_{d}) \in \Theta^{d}$, the random variable $\psi(\tau_{1}, \tau_{2}, ..., \tau_{d})$ is $\mathcal{F}_{\tau_{1}\vee\tau_{2}\vee ... \vee \tau_{d}}$ - measurable.
\item[(b)] $\psi(\tau_{1}, \tau_{2}, ..., \tau_{d}) = \psi(\tilde{\tau}_{1}, \tilde{\tau}_{2}, ..., \tilde{\tau}_{d})$ on the set $\{\tau_{1} = \tilde{\tau}_{1}\} \cap \{\tau_{2} = \tilde{\tau}_{2}\} \cap ... \cap \{\tau_{d} = \tilde{\tau}_{d}\}$.
\end{itemize}
Moreover, for $p \in [1, +\infty]$ fixed, we say that a $d$-admissible family $\psi$ is $p$-integrable, if for all $\tau_{1} \in \Theta$, for all $\tau_{2} \in \Theta$, ..., for all $\tau_{d} \in \Theta$, $\psi(\tau_{1}, \tau_{2}, ..., \tau_{d}) \in L^{p}$.

\end{definition}

\noindent
In the sequel of this section, $\psi$ is assumed to be $d$-admissible $p$-integrable family.
\begin{assumption} \label{TEXAssumption3-3}
For each $S \in \Theta$, $V(S)$ is in $L^{p}$.

\end{assumption}

\begin{proposition}\label{TEXproposition5555}
i) $(V(S))_{S \in \Theta}$ is an admissible family.\\
[0.2cm]
ii) (maximising sequence) There exists a sequence of $d$ Bermudan stopping times $(\tau_{1}^{n}, \tau_{2}^{n}, ..., \tau_{d}^{n}) \in \Theta_{S}^{d}$, such that 
\begin{itemize}
\item{} $(\rho_{S, \tau_{1}^{n} \vee \tau_{2}^{n} \vee ... \vee \tau_{d}^{n}}[\psi(\tau_{1}^{n}, \tau_{2}^{n}, ..., \tau_{d}^{n})])_{n \in \mathbbm{N}}$ is non-decreasing, 
\item{} $V(S) = \lim_{n \to +\infty} \rho_{S, \tau_{1}^{n} \vee \tau_{2}^{n} \vee ... \vee \tau_{d}^{n}}[\psi(\tau_{1}^{n}, \tau_{2}^{n}, ..., \tau_{d}^{n})]$.
\end{itemize}
iii) The value family $(V(S))_{S \in \Theta}$ is a $(\Theta, \rho)$-supermartingale family.

\end{proposition}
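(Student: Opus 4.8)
The plan is to mimic, step by step, the proof of Proposition \ref{TEXproposition2}, the only new input being a $d$-index version of Remark \ref{TEXREmark2}. Concretely, I would first establish that for every $A \in \mathcal{F}_S$ and all $(\tau_1, \ldots, \tau_d), (\tau_1', \ldots, \tau_d') \in \Theta_S^d$ with $\tau_i = \tau_i'$ on $A$ for each $i$,
\[
\mathbbm{1}_A \rho_{S, \tau_1 \vee \cdots \vee \tau_d}[\psi(\tau_1, \ldots, \tau_d)] = \mathbbm{1}_A \rho_{S, \tau_1' \vee \cdots \vee \tau_d'}[\psi(\tau_1', \ldots, \tau_d')].
\]
This is proved by iterating $d$ times the ``consistency $\to$ generalized zero-one law $\to$ consistency'' cycle of Remark \ref{TEXREmark2}, updating one coordinate per step: at the $j$-th step one uses the consistency property (v) (valid since $S \leq \tau_j \leq \tau_1' \vee \cdots \vee \tau_{j-1}' \vee \tau_j \vee \cdots \vee \tau_d$) to write the quantity as $\mathbbm{1}_A \rho_{S, \tau_j}[\,\cdot\,]$, observes that the inner family $\tau_j \mapsto \rho_{\tau_j,\, \tau_1' \vee \cdots \vee \tau_{j-1}' \vee \tau_j \vee \cdots \vee \tau_d}[\psi(\tau_1', \ldots, \tau_{j-1}', \tau_j, \ldots, \tau_d)]$ is admissible and $p$-integrable by the $d$-index analogue of property (vi bis), applies the generalized zero-one law (vi) to swap $\tau_j$ for $\tau_j'$ on $A$, and then recollapses by (v).

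Granting this, statement (i) follows as in Proposition \ref{TEXproposition2}: $V(S)$ is $\mathcal{F}_S$-measurable by property (i) of $\rho$ and the usual stability of the essential supremum; and for the pasting property, setting $A = \{S = S'\}$ and using the admissibility property (ii) of $\rho$, one has $\mathbbm{1}_A V(S) = \esssup_{(\tau_1,\ldots,\tau_d)\in\Theta_S^d} \mathbbm{1}_A \rho_{S', \tau_1 \vee \cdots \vee \tau_d}[\psi(\tau_1,\ldots,\tau_d)]$; replacing each $\tau_i$ by $\tau_i^A := \tau_i \mathbbm{1}_A + T\mathbbm{1}_{A^c} \in \Theta_{S'}$ (stability under concatenation) and applying the $d$-index Remark gives $\mathbbm{1}_A \rho_{S', \tau_1 \vee \cdots \vee \tau_d}[\psi(\tau_1,\ldots,\tau_d)] = \mathbbm{1}_A \rho_{S', \tau_1^A \vee \cdots \vee \tau_d^A}[\psi(\tau_1^A,\ldots,\tau_d^A)] \leq \mathbbm{1}_A V(S')$; taking the essential supremum and interchanging $S, S'$ yields equality on $A$. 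For (ii), I would show the family $(\rho_{S, \tau_1 \vee \cdots \vee \tau_d}[\psi(\tau_1,\ldots,\tau_d)])_{(\tau_1,\ldots,\tau_d)\in\Theta_S^d}$ is directed upwards --- given two $d$-tuples $\tau, \tau'$, put $A := \{\rho_{S,\tau_1'\vee\cdots\vee\tau_d'}[\psi(\tau')] \leq \rho_{S,\tau_1\vee\cdots\vee\tau_d}[\psi(\tau)]\}\in\mathcal{F}_S$, set $\nu_i := \tau_i\mathbbm{1}_A + \tau_i'\mathbbm{1}_{A^c} \in \Theta_S$, and split on $A$ and $A^c$, using the $d$-index Remark on each piece, to get $\rho_{S,\nu_1\vee\cdots\vee\nu_d}[\psi(\nu)] = \max(\rho_{S,\tau_1\vee\cdots\vee\tau_d}[\psi(\tau)], \rho_{S,\tau_1'\vee\cdots\vee\tau_d'}[\psi(\tau')])$ --- and then invoke the corresponding property of the essential supremum.

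For (iii): $(V(S))_{S\in\Theta}$ is admissible by (i) and $p$-integrable by Assumption \ref{TEXAssumption3-3}. Fixing $S \leq S'$ in $\Theta$ and choosing via (ii) a maximising sequence $(\tau_1^n,\ldots,\tau_d^n)\in\Theta_{S'}^d$ with $V(S') = \lim_n \uparrow \rho_{S',\tau_1^n\vee\cdots\vee\tau_d^n}[\psi(\tau_1^n,\ldots,\tau_d^n)]$, the monotone Fatou property (vii) followed by the consistency property (v) give
\[
\rho_{S,S'}[V(S')] \leq \liminf_n \rho_{S,S'}[\rho_{S',\tau_1^n\vee\cdots\vee\tau_d^n}[\psi(\tau_1^n,\ldots,\tau_d^n)]] = \liminf_n \rho_{S,\tau_1^n\vee\cdots\vee\tau_d^n}[\psi(\tau_1^n,\ldots,\tau_d^n)] \leq V(S),
\]
which is the supermartingale inequality. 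The only real obstacle is the $d$-fold induction behind the $d$-index version of Remark \ref{TEXREmark2}, and within it the verification that each ``partially updated'' inner family is admissible so that the generalized zero-one law is applicable; once that is in place, the rest is a routine transcription of the $d=2$ proof.
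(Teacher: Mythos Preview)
Your proposal is correct and follows essentially the same approach as the paper. The only cosmetic difference is that the paper packages the key identity as Property~\ref{TEXPROperty-3-1} (stated with $\tau_1\wedge\cdots\wedge\tau_d$ as the first index) rather than as a direct $d$-index analogue of Remark~\ref{TEXREmark2} (with $S$ as the first index); the two are equivalent via consistency and the generalized zero-one law, and your iterative proof sketch for it is exactly the right argument. Your treatment of part~(i) is in fact more complete than the paper's Appendix, which only records the $\mathcal{F}_S$-measurability and leaves the pasting property implicit.
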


\begin{property}\label{TEXPROperty-3-1}
Let $(\tau_{1}, ..., \tau_{d}) \in \Theta^{d}$, and $(\tau_{1}', ..., \tau_{d}') \in \Theta^{d}$. Let $A$ be in $\mathcal{F}_{\tau_{1}\wedge...\wedge\tau_{d}}$, and such that $\tau_{1} = \tau_{1}'$ on $A$, $...$, $\tau_{d} = \tau_{d}'$ on $A$. Then, 
$$\mathbbm{1}_{A}\rho_{\tau_{1} \wedge ... \wedge \tau_{d}, \tau_{1} \vee ... \vee \tau_{d}}[\psi(\tau_{1}, ..., \tau_{d})] = \mathbbm{1}_{A}\rho_{\tau_{1}' \wedge ... \wedge \tau_{d}', \tau_{1}' \vee ... \vee \tau_{d}'}[\psi(\tau_{1}', ..., \tau_{d}')].$$ 

\end{property}

\begin{proof}[Proof of Proposition \ref{TEXproposition5555}]
The proof is analogous to the proof of Proposition \ref{TEXproposition2}, and is given in the Appendix.\\
[0.2cm]

\end{proof}

\subsection{Reduction approach} \label{SubSection4-1}
As a first step, we reduce the non-linear optimal $d$-stopping problem (over a strategy with $d$ components) to $d$ auxiliary non-linear optimal stopping problems, each of which is an optimal $(d-1)$-stopping problem (that is, over a strategy with $(d-1)$ components). More precisely, for each $i \in \{1, 2, ..., d\}$, we define the following $d$ auxiliary problems:  for each $S \in \Theta$,
\begin{equation}
u^{(1)}(S) \coloneqq \esssup_{(\tau_{2}, ..., \tau_{d}) \in \Theta_{S}^{d-1}} \rho_{S, S \vee \tau_{2} \vee ... \vee \tau_{d}}[\psi(S, \tau_{2}, ..., \tau_{d})].
\end{equation}

\begin{equation}
u^{(2)}(S) \coloneqq \esssup_{(\tau_{1}, \tau_{3}..., \tau_{d}) \in \Theta_{S}^{d-1}} \rho_{S, \tau_{1} \vee S \vee \tau_{3}... \vee \tau_{d}}[\psi(\tau_{1}, S, \tau_{3}, ..., \tau_{d})].
\end{equation}
$$...$$
\begin{equation}
u^{(d)}(S) \coloneqq \esssup_{(\tau_{1}, \tau_{2}..., \tau_{d-1}) \in \Theta_{S}^{d-1}} \rho_{S, \tau_{1} \vee \tau_{2}... \vee \tau_{d-1} \vee S}[\psi(\tau_{1}, \tau_{2}, ..., \tau_{d-1}, S)].
\end{equation}
We define: for each $\tau \in \Theta$, 
\begin{equation}
\phi(\tau) \coloneqq u^{(1)}(\tau) \vee u^{(2)}(\tau) \vee ... \vee u^{(d)}(\tau).
\end{equation}
We now consider the non-linear single optimal stopping problem with this auxiliary reward (or pay-off) family, that is, for each $S \in \Theta$, we consider
\begin{equation} \label{TEXEquation1616}
u(S) \coloneqq \esssup_{\tau \in \Theta_{S}}\rho_{S, \tau}[\phi(\tau)].
\end{equation}

\begin{theorem}[Reduction]\label{TEXtheorem3333}
For each $S \in \Theta$, $u(S) = V(S)$.

\end{theorem}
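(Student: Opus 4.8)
The plan is to mimic the two-step argument of Theorem~\ref{TEXTheorem5-1}, which handles the case $d=2$, now with the $d$ auxiliary $(d-1)$-stopping problems $u^{(1)}, \dots, u^{(d)}$ playing the role of $v_1, v_2$.

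\emph{Step 1 (the inequality $V(S) \geq u(S)$).} First I would show that $(V(S))_{S \in \Theta}$ is a $(\Theta, \rho)$-supermartingale family dominating $(\phi(S))_{S \in \Theta}$. The supermartingale property is already Proposition~\ref{TEXproposition5555}(iii). For domination, fix $i \in \{1, \dots, d\}$ and observe that for any $(\tau_1, \dots, \tau_{i-1}, \tau_{i+1}, \dots, \tau_d) \in \Theta_S^{d-1}$, plugging $\tau_i = S$ into the definition of $V(S)$ and using $S \vee \tau_1 \vee \cdots = \tau_1 \vee \cdots$ (since all indices are $\geq S$), together with knowledge preservation as in the $d=2$ case, gives $V(S) \geq \rho_{S, \cdots}[\psi(\tau_1, \dots, S, \dots, \tau_d)]$; taking the essential supremum yields $V(S) \geq u^{(i)}(S)$, hence $V(S) \geq \phi(S)$. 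Then the $(\Theta,\rho)$-Snell-envelope characterisation of $u(S)$ (from \cite{Grigorova-Quenez-Peng-1}) gives $V(S) \geq u(S)$.

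\emph{Step 2 (the inequality $V(S) \leq u(S)$).} Fix $(\tau_1, \dots, \tau_d) \in \Theta_S^d$ and set $\underline\tau := \tau_1 \wedge \cdots \wedge \tau_d$ and $\overline\tau := \tau_1 \vee \cdots \vee \tau_d$. The idea is to condition on which index achieves the minimum. Let $A_i := \{\tau_i = \underline\tau\} \setminus (A_1 \cup \cdots \cup A_{i-1})$ for $i = 1, \dots, d$, so $\{A_i\}$ is an $\mathcal{F}_{\underline\tau}$-measurable partition of $\Omega$. By the consistency property,
\begin{equation*}
\rho_{S, \overline\tau}[\psi(\tau_1, \dots, \tau_d)] = \rho_{S, \underline\tau}\Big[\textstyle\sum_{i=1}^d \mathbbm{1}_{A_i}\,\rho_{\underline\tau, \overline\tau}[\psi(\tau_1, \dots, \tau_d)]\Big].
\end{equation*}
On $A_i$ we have $\underline\tau = \tau_i$, so by a $d$-index analogue of property (vi ter) — this is exactly Property~\ref{TEXPROperty-3-1} above, suitably applied — one rewrites $\mathbbm{1}_{A_i}\rho_{\underline\tau, \overline\tau}[\psi(\tau_1, \dots, \tau_d)] = \mathbbm{1}_{A_i}\rho_{\tau_i, \tau_1 \vee \cdots \vee \tau_d}[\psi(\tau_1, \dots, \tau_d)]$, and this last quantity is, by definition of $u^{(i)}$, bounded above by $\mathbbm{1}_{A_i} u^{(i)}(\tau_i) \leq \mathbbm{1}_{A_i}\phi(\tau_i) = \mathbbm{1}_{A_i}\phi(\underline\tau)$. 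By monotonicity of $\rho_{S, \underline\tau}$ we obtain $\rho_{S, \overline\tau}[\psi(\tau_1, \dots, \tau_d)] \leq \rho_{S, \underline\tau}[\phi(\underline\tau)] \leq u(S)$. Taking the essential supremum over $(\tau_1, \dots, \tau_d) \in \Theta_S^d$ gives $V(S) \leq u(S)$, and combining with Step~1 closes the proof.

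\emph{Main obstacle.} The delicate point is Step~2: making the ``condition on the argmin'' bookkeeping rigorous. One must check that the partition $\{A_i\}$ is genuinely $\mathcal{F}_{\underline\tau}$-measurable, and — more importantly — that on $A_i$ the restricted operator $\rho_{\tau_i,\,\tau_1\vee\cdots\vee\tau_d}$ acting on $\psi(\tau_1,\dots,\tau_d)$ really does fall within the scope of the definition of $u^{(i)}(\tau_i)$, i.e. that the other $d-1$ stopping times, restricted to $A_i$, can be viewed as elements of $\Theta_{\tau_i}$ (they are $\geq \underline\tau = \tau_i$ on $A_i$, which is what is needed, using concatenation-stability of $\Theta$ as in the proof of Proposition~\ref{TEXproposition2}(i)). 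This requires the $d$-index generalized zero-one law / Property~\ref{TEXPROperty-3-1} to handle the patching on $A_i^c$; everything else is a routine extension of the $d=2$ argument. Since the statement only claims the identity $u(S) = V(S)$ (and the full details are deferred to the Appendix in the same spirit as Proposition~\ref{TEXproposition5555}), I would present the two steps as above and relegate the partition bookkeeping to the Appendix.
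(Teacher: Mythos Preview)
Your proposal is correct and follows essentially the same two-step approach as the paper's proof: Step~1 uses the supermartingale property of $V$ (Proposition~\ref{TEXproposition5555}(iii)), domination of each $u^{(i)}$ by specialising $\tau_i = S$, and the Snell-envelope characterisation; Step~2 partitions on the argmin, applies consistency to insert $\rho_{\underline\tau,\overline\tau}$, bounds each piece by $u^{(i)}(\tau_i)\leq\phi(\underline\tau)$, and concludes by monotonicity.

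Two minor differences worth noting: (1) the paper justifies the identity $\mathbbm{1}_{A_i}\rho_{\underline\tau,\overline\tau}[\psi]=\mathbbm{1}_{A_i}\rho_{\tau_i,\overline\tau}[\psi]$ directly from admissibility (property~(ii)), rather than the heavier Property~\ref{TEXPROperty-3-1}; (2) the paper does not explicitly address the point you flag as the ``main obstacle'' (that the $\tau_j$, $j\neq i$, need to be viewed as elements of $\Theta_{\tau_i}$ on $A_i$) --- it simply writes $\mathbbm{1}_{A_i}\rho_{\tau_i,\overline\tau}[\psi(\tau_1,\dots,\tau_d)]\leq\mathbbm{1}_{A_i}u^{(i)}(\tau_i)$. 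Your observation that this requires a small patching argument (e.g.\ replacing $\tau_j$ by $\tau_j\vee\tau_i$ and invoking the generalized zero-one law) is a valid refinement.
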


\begin{remark} [Notation] 
In the particular case, where $d=2$, we used different notation. We have: $v_{1}(S) = u^{(2)}(S)$, $v_{2}(S) = u^{(1)}(S)$.

\end{remark}

\begin{remark} [Notation] 
For simplicity, we denote the vector of $(d-1)$ components $(\tau_{1}, ..., \tau_{i - 1}, \tau_{i+1}, ..., \tau_{d})$ by $\mathbf{\tau^{(-i)}}$.

\end{remark}

\begin{remark} 
Since $\psi$ is $d$-admissible, we have that for each $i \in \{1, ..., d\}$, the family $(\psi(\tau_{1}, ..., S, ..., \tau_{d}))_{\mathbf{\tau^{(-i)}} \in \Theta_{S}^{d-1}}$ is $(d-1)$-admissible. We can show by using Property \ref{TEXPROperty-3-1}, that the value family $(u^{(i)}(S))_{S \in \Theta}$ is an admissible family. Moreover, for each $i \in \{1, ..., d\}$, $\psi(S, S, ..., S) \leq u^{(i)}(S) \leq V(S)$ a.s. Hence, $u^{(i)}(S)$ is $p$-integrable by Assumption \ref{TEXAssumption3-3}.

\end{remark}

\begin{proof}[Proof of Theorem \ref{TEXtheorem3333}]
\textbf{Step 1:} We will show that $(V(S))_{S \in \Theta}$ is a $(\Theta, \rho)$-supermartingale family greater than or equal to $(\phi(S))_{S \in \Theta}$. We have already checked that $(V(S))$ is a $(\Theta, \rho)$-supermartingale family (cf. Proposition \ref{TEXproposition5555}). Moreover, by definition of $V(S)$, for any $i \in \{1, ..., d\}$, for any $(\tau_{1}, ..., \tau_{i-1}, \tau_{i+1}, ..., \tau_{d}) \in \Theta_{S}^{d-1}$, we have 
\begin{align*}
V(S) &\geq \rho_{S, \tau_{1} \vee ... \vee \tau_{i-1} \vee S \vee \tau_{i+1} \vee ... \vee \tau_{d}}[\psi(\tau_{1}, ..., \tau_{i-1}, S, \tau_{i+1}, ..., \tau_{d})]\\
&= \rho_{S, \tau_{1} \vee ... \vee \tau_{i-1} \vee \tau_{i+1} \vee ... \vee \tau_{d}}[\psi(\tau_{1}, ..., \tau_{i-1}, S, \tau_{i+1}, ..., \tau_{d})]
\end{align*}
Hence, we have
\begin{align*}
V(S) &\geq \esssup_{(\tau_{1}, ..., \tau_{i-1}, \tau_{i+1}, ..., \tau_{d}) \in \Theta_{S}^{d-1}} \rho_{S, \tau_{1} \vee ... \vee \tau_{i-1} \vee \tau_{i+1} \vee ... \vee \tau_{d}}[\psi(\tau_{1}, ..., \tau_{i-1}, S, \tau_{i+1}, ..., \tau_{d})]\\
&= u^{(i)}(S).
\end{align*}
As this holds true for all $i \in \{1, ..., d\}$, we get 
$$V(S) \geq u^{(1)}(S) \vee u^{(2)}(S) \vee ... \vee u^{(d)}(S) = \phi(S).$$
Hence, by the $(\Theta, \rho)$-Snell envelope characterisation of the value of the single non-linear optimal stopping problem \eqref{TEXEquation1616}, we get $V(S) \geq u(S)$.\\
[0.3cm]
\textbf{Step 2:} We will show the converse inequality.\\
[0.2cm]
Let $(\tau_{1}, \tau_{2}, ..., \tau_{d}) \in \Theta_{S}^{d}$. We will show that 
\begin{equation}
\rho_{S, \tau_{1} \vee \tau_{2} \vee ... \vee \tau_{d}}[\psi(\tau_{1}, \tau_{2}, ..., \tau_{d})] \leq \rho_{S, \tau_{1} \wedge \tau_{2} \wedge ... \wedge \tau_{d}}[\phi(\tau_{1} \wedge \tau_{2} \wedge ... \wedge \tau_{d})].
\end{equation}
There exists sets $(A_{i})_{i \in \{1, ..., d\}}$ with $\Omega = \bigcup_{i=1}^{d}A_{i}$, and $A_{i} \bigcap A_{j} = \emptyset$, for $i \neq j$, such that, for each $i \in \{1, ..., d\}$, $\tau_{1} \wedge ... \wedge \tau_{d} = \tau_{i}$ a.s. on $A_{i}$, and $A_{i} \in \mathcal{F}_{\tau_{1} \wedge ... \wedge \tau_{d}}$.\\
[0.2cm]
By the consistency property of $\rho$, we have 
\begin{align*}
\rho_{S, \tau_{1} \vee ... \vee \tau_{d}}[\psi(\tau_{1}, ..., \tau_{d})] &= \rho_{S, \tau_{1} \wedge ... \wedge \tau_{d}}[\rho_{\tau_{1} \wedge ... \wedge \tau_{d}, \tau_{1} \vee ... \vee \tau_{d}}[\psi(\tau_{1}, ..., \tau_{d})]]\\
&= \rho_{S, \tau_{1} \wedge ... \wedge \tau_{d}}[\sum^{d}_{i=1}\mathbbm{1}_{A_{i}}\rho_{\tau_{1} \wedge ... \wedge \tau_{d}, \tau_{1} \vee ... \vee \tau_{d}}[\psi(\tau_{1}, ..., \tau_{d})]].
\end{align*}
Let $i \in \{1, ..., d\}$. By property (ii) of $\rho$ (admissibility), we have:
\begin{align*}
\mathbbm{1}_{A_{i}}\rho_{\tau_{1} \wedge ... \wedge \tau_{d}, \tau_{1} \vee ... \vee \tau_{d}}[\psi(\tau_{1}, ..., \tau_{d})] &= \mathbbm{1}_{A_{i}}\rho_{\tau_{i}, \tau_{1} \vee ... \vee \tau_{i} \vee ... \vee \tau_{d}}[\psi(\tau_{1}, ..., \tau_{i}, ..., \tau_{d})]\\
&\leq \mathbbm{1}_{A_{i}}u^{(i)}(\tau_{i}) \leq \mathbbm{1}_{A_{i}}\phi(\tau_{i}),
\end{align*}
where we have used the also the definitions of $u^{(i)}(\tau_{i})$ and $\phi(\tau_{i})$ for the inequalities.\\
[0.2cm]
Hence, by using the monotonicity of $\rho$, we get
\begin{equation}
\rho_{S, \tau_{1} \vee ... \vee \tau_{d}}[\psi(\tau_{1}, ..., \tau_{d})] \leq \rho_{S, \tau_{1} \wedge ... \wedge \tau_{d}}[\sum_{i =1}^{d}\mathbbm{1}_{A_{i}}u^{(i)}(\tau_{i})] \leq \rho_{S, \tau_{1} \wedge ... \wedge \tau_{d}}[\sum_{i =1}^{d}\mathbbm{1}_{A_{i}}\phi(\tau_{i})].
\end{equation}
Now, 
\begin{align*}
\rho_{S, \tau_{1} \wedge ... \wedge \tau_{d}}[\sum_{i =1}^{d}\mathbbm{1}_{A_{i}}\phi(\tau_{i})] &= \rho_{S, \tau_{1} \wedge ... \wedge \tau_{d}}[\sum_{i}\phi(\tau_{1} \wedge ... \wedge \tau_{d})\mathbbm{1}_{A_{i}}]\\
&= \rho_{S, \tau_{1} \wedge ... \wedge \tau_{d}}[\phi(\tau_{1}\wedge ... \wedge \tau_{d})] \leq u(S),
\end{align*}
where we have used the properties on $(A_{i})_{i \in \{1, ..., d\}}$, and definition of $u(S)$ from \eqref{TEXEquation1616}.\\
[0.2cm]
As $(\tau_{1}, ..., \tau_{d})$ was taken arbitrary, we get $V(S) \leq u(S)$.

\end{proof}

\subsubsection{The particular symmetric case}
In this sub-section, we will consider the particular case where the pay-off family $(\psi(\tau_{1}, ..., \tau_{d}))$ is \textbf{symmetric} with respect to the individual components of the strategy, that is, $\psi(\tau_{1}, ..., \tau_{d}) = \psi(\tau_{\sigma(1)}, ..., \tau_{\sigma(d)})$, for any permutation $\sigma$ of the indices $\{1, ..., d\}$.

\begin{remark}
For the particular case, where $d = 2$, the pay-off family $\psi$ is symmetric, if and only if, $\psi(\tau_{1}, \tau_{2}) = \psi(\tau_{2}, \tau_{1})$. 

\end{remark}
\noindent
By symmetry, we can assume (without loss of generality) that $\tau_{1} \leq \tau_{2} \leq ... \leq \tau_{d}$.\\
[0.2cm]
We define: for each $S \in \Theta$, $\mathcal{O}_{S}^{d} \coloneqq \{(\tau_{1}, ..., \tau_{d}) \in \Theta_{S}^{d}: \tau_{1} \leq \tau_{2} \leq ... \leq \tau_{d}\}$. In the symmetric case, we have
\begin{align*}
V(S) &= \esssup_{(\tau_{1}, ..., \tau_{d}) \in \Theta_{S}^{d}}\rho_{S, \tau_{1} \vee ... \vee \tau_{d}}[\psi(\tau_{1}, ..., \tau_{d})]\\
&= \esssup_{(\tau_{1}, ..., \tau_{d}) \in \mathcal{O}_{S}^{d}}\rho_{S, \tau_{1} \vee ... \vee \tau_{d}}[\psi(\tau_{1}, ..., \tau_{d})]\\
&= \esssup_{(\tau_{1}, ..., \tau_{d}) \in \mathcal{O}_{S}^{d}}\rho_{S, \tau_{d}}[\psi(\tau_{1}, ..., \tau_{d})].
\end{align*}
It follows, from the symmetry assumption, that the auxiliary value families $u^{(i)}(S)$ coincide, that is, $u^{(1)}(S) = u^{(2)}(S) = ... = u^{(d)}(S)$, and hence, $\phi(S) = u^{(1)}(S)$. Moreover, we can write:
\begin{align*}
u^{(1)}(S) &= \esssup_{(\tau_{2}, ..., \tau_{d}) \in \mathcal{O}_{S}^{d-1}} \rho_{S, \tau_{2} \vee ... \vee \tau_{d}}[\psi(S, \tau_{2}, ..., \tau_{d})]\\
&= \esssup_{(\tau_{2}, ..., \tau_{d}) \in \mathcal{O}_{S}^{d-1}} \rho_{S, \tau_{d}}[\psi(S, \tau_{2}, ..., \tau_{d})].
\end{align*}
Thus, the Reduction Theorem \ref{TEXtheorem3333} can be expressed as follows:
$$V(S) = \esssup_{\tau \in \Theta_{S}} \rho_{S, \tau}[u^{(1)}(\tau)].$$
We can continue the computations by using again the symmetry and the Reduction Theorem \ref{TEXtheorem3333} (applied to $u^{(1)}(\tau)$). Indeed, the non-linear optimal stopping problem with value $u^{(1)}(\tau)$ is a symmetric $(d-1)$-stopping problem.\\
[0.2cm]
Let us consider $S_{1}, S_{2} \in \Theta$, such that $S_{1} \leq S_{2}$ and the new reward
$$\phi_{2}(S_{1}, S_{2}) \coloneqq \esssup_{(\tau_{3}, ..., \tau_{d}) \in \mathcal{O}_{S_{2}}^{d-2}} \rho_{S_{2}, \tau_{3} \vee ... \vee \tau_{d}}[\psi(S_{1}, S_{2}, \tau_{3}, ..., \tau_{d})].$$
The Reduction Theorem \ref{TEXtheorem3333} and the symmetry give us that 
$$u^{(1)}(S_{1}) = \esssup_{\tau \in \Theta_{S_{1}}}\rho_{S_{1}, \tau}[\phi_{2}(S_{1}, \tau)].$$
We will then consider the non-linear optimal stopping problem with value family $\phi_{2}(S_{1}, \tau)$, which is a symmetric $(d-2)$-stopping problem, and so forth.\\
[0.2cm]
By induction, we define the new award: 
\begin{equation} \label{TEXEquation1919}
\phi_{i}(S_{1}, ..., S_{i}) \coloneqq \esssup_{(\tau_{i+1}, ..., \tau_{d}) \in \mathcal{O}^{d-i}_{S_{i}}}\rho_{S_{i}, \tau_{i+1} \vee ... \vee \tau_{d}}[\psi(S_{1}, ..., S_{i}, \tau_{i+1}, ..., \tau_{d})],
\end{equation}
for each $(S_{1}, ..., S_{i}) \in \mathcal{O}^{i}_{S}$.

\begin{proposition} \label{TEXPROPosition6666}
Let $\psi$ be a symmetric $d$-admissible, $p$-integrable family. We define the family $\phi_{d-1}$ by:
\begin{equation}\label{TEXequation3-18}
\phi_{d-1}(S_{1}, ..., S_{d-1}) \coloneqq \esssup_{\tau \in \Theta_{S_{d-1}}}\rho_{S_{d-1}, \tau}[\psi(S_{1}, ..., S_{d-1}, \tau)].
\end{equation}
Moreover, we define, by backward induction, the following families: for $i \in \{d-2, ..., 1\}$, for $(S_{1}, ..., S_{i}) \in \mathcal{O}_{S}^{i}$,
\begin{equation} \label{TEXEquation2121}
\phi_{i}(S_{1}, ..., S_{i}) \coloneqq \esssup_{\tau \in \Theta_{S_{i}}} \rho_{S_{i}, \tau}[\phi_{i+1}(S_{1}, ..., S_{i}, \tau)].
\end{equation}
Then, we have 
$$V(S) = \esssup_{\tau \in \Theta_{S}}\rho_{S, \tau}[\phi_{1}(\tau)].$$

\end{proposition}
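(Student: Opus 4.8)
The statement is proved by finite induction on $i$, running backwards from $i = d-1$ down to $i = 1$, with the base case supplied by the Reduction Theorem \ref{TEXtheorem3333} in the symmetric setting. The key claim to establish, for each $i \in \{1, \dots, d-1\}$, is the identity
\begin{equation}\label{TEXClaimIH}
\phi_i(S_1, \dots, S_i) = \esssup_{(\tau_{i+1}, \dots, \tau_d) \in \mathcal{O}^{d-i}_{S_i}} \rho_{S_i, \tau_{i+1} \vee \dots \vee \tau_d}[\psi(S_1, \dots, S_i, \tau_{i+1}, \dots, \tau_d)],
\end{equation}
for all $(S_1, \dots, S_i) \in \mathcal{O}^i_S$; that is, the backward-induction family of \eqref{TEXequation3-18}--\eqref{TEXEquation2121} coincides with the ``partially frozen'' value family $\phi_i$ of \eqref{TEXEquation1919}. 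Once \eqref{TEXClaimIH} is known for $i = 1$, the conclusion $V(S) = \esssup_{\tau \in \Theta_S} \rho_{S, \tau}[\phi_1(\tau)]$ is exactly the symmetric form of Theorem \ref{TEXtheorem3333} already recorded in the excerpt (namely $V(S) = \esssup_{\tau \in \Theta_S} \rho_{S, \tau}[u^{(1)}(\tau)]$ together with $\phi_1 = u^{(1)}$ when one freezes a single first index).

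\textbf{Base case ($i = d-1$).} Here \eqref{TEXEquation1919} reads $\phi_{d-1}(S_1, \dots, S_{d-1}) = \esssup_{\tau_d \in \mathcal{O}^1_{S_{d-1}}} \rho_{S_{d-1}, \tau_d}[\psi(S_1, \dots, S_{d-1}, \tau_d)]$, and since $\mathcal{O}^1_{S_{d-1}} = \Theta_{S_{d-1}}$ this is precisely the defining formula \eqref{TEXequation3-18} for $\phi_{d-1}$ in Proposition \ref{TEXPROPosition6666}. So the base case holds by definition, with no work.

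\textbf{Inductive step.} Assume \eqref{TEXClaimIH} holds at level $i+1$ for all admissible $(S_1, \dots, S_{i+1}) \in \mathcal{O}^{i+1}_S$. Fix $(S_1, \dots, S_i) \in \mathcal{O}^i_S$. I would view the right-hand side of \eqref{TEXClaimIH} at level $i$ as a non-linear optimal stopping problem in the single variable $\tau_{i+1}$ over $\Theta_{S_i}$, with a reward family obtained by optimizing out the remaining indices $\tau_{i+2}, \dots, \tau_d$; concretely, for each $\tau \in \Theta_{S_i}$, by the symmetry of $\psi$ (which lets me restrict to the ordered cone $\mathcal{O}^{d-i}_{S_i}$, hence to $\tau_{i+1} = \tau \leq \tau_{i+2} \leq \dots \leq \tau_d$) and by the consistency property (v) of $\rho$, I rewrite
\[
\rho_{S_i, \tau_{i+1} \vee \dots \vee \tau_d}[\psi(\dots)] = \rho_{S_i, \tau}\bigl[\rho_{\tau, \tau_{i+2} \vee \dots \vee \tau_d}[\psi(S_1, \dots, S_i, \tau, \tau_{i+2}, \dots, \tau_d)]\bigr].
\]
Taking the essential supremum over $(\tau_{i+2}, \dots, \tau_d) \in \mathcal{O}^{d-i-1}_\tau$ inside, and using that $\rho_{S_i,\tau}$ commutes with the essential supremum of a directed-upward family (the directedness argument being identical to part (ii) of Proposition \ref{TEXproposition2}, invoked also in \cite{Grigorova-Quenez-Peng-1} to interchange $\rho$ and $\esssup$ for single non-linear optimal stopping), gives
\[
\esssup_{(\tau_{i+1}, \dots, \tau_d) \in \mathcal{O}^{d-i}_{S_i}} \rho_{S_i, \tau_{i+1} \vee \dots \vee \tau_d}[\psi(\dots)] = \esssup_{\tau \in \Theta_{S_i}} \rho_{S_i, \tau}\bigl[\phi_{i+1}(S_1, \dots, S_i, \tau)\bigr],
\]
where in the last step I identified the inner essential supremum with $\phi_{i+1}(S_1, \dots, S_i, \tau)$ using the inductive hypothesis \eqref{TEXClaimIH} at level $i+1$ (applied with the tuple $(S_1, \dots, S_i, \tau) \in \mathcal{O}^{i+1}_S$, which is admissible since $\tau \geq S_i \geq \dots \geq S_1 \geq S$). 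The right-hand side above is exactly $\phi_i(S_1, \dots, S_i)$ by \eqref{TEXEquation2121}, completing the induction.

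\textbf{Main obstacle.} The delicate point is the interchange of $\rho_{S_i, \tau}$ with the essential supremum over $(\tau_{i+2}, \dots, \tau_d)$: one must check that the family $\{\rho_{\tau, \tau_{i+2} \vee \dots \vee \tau_d}[\psi(S_1, \dots, S_i, \tau, \tau_{i+2}, \dots, \tau_d)] : (\tau_{i+2}, \dots, \tau_d) \in \mathcal{O}^{d-i-1}_\tau\}$ is directed upward, which requires pasting two ordered tuples along a set in $\mathcal{F}_\tau$ and invoking Property \ref{TEXPROperty-3-1} (the $d$-variable generalized zero-one law) to guarantee that the pasted tuple yields the maximum of the two values, together with the monotone Fatou property (vii) to pass the limit of the resulting non-decreasing maximizing sequence through $\rho_{S_i,\tau}$. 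This is essentially the $(d{-}i)$-variable analogue of Step 2 of Theorem \ref{TEXtheorem3333} combined with the directedness argument of Proposition \ref{TEXproposition5555}(ii); it is routine but must be stated carefully because the ``paste and take max'' step uses the ordered-cone structure $\mathcal{O}$ and the admissibility of the partially-optimized families $(\phi_{i+1}(S_1,\dots,S_i,\tau))_{\tau \in \Theta_{S_i}}$, which itself should be recorded (it follows from Property \ref{TEXPROperty-3-1} exactly as the admissibility of $u^{(i)}$ was noted earlier). A cleaner alternative, which I would actually prefer to write, is to bypass the direct computation entirely: observe that $\phi_i$ as defined in \eqref{TEXEquation1919} is, for each fixed $(S_1, \dots, S_i)$, the value family of a non-linear optimal $(d{-}i)$-stopping problem with frozen parameters; apply the Reduction Theorem \ref{TEXtheorem3333} to that subproblem to peel off one index $\tau_{i+1}$ and reduce to $\esssup_{\tau} \rho_{S_i,\tau}[\phi_{i+1}(S_1,\dots,S_i,\tau)]$, where $\phi_{i+1}$ is the value of the further-reduced $(d{-}i{-}1)$-stopping problem — this is precisely \eqref{TEXEquation2121}, and the symmetry is what guarantees that the single relevant auxiliary family $u^{(1)}$ of Theorem \ref{TEXtheorem3333} is the one appearing in \eqref{TEXEquation1919}. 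In either presentation the induction closes and the Proposition follows.
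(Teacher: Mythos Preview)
Your proposal is correct and, in its ``cleaner alternative'' formulation, is essentially identical to the paper's proof: the paper proceeds by backward induction on $i$, with inductive hypothesis exactly your \eqref{TEXClaimIH}, and at the inductive step applies the Reduction Theorem \ref{TEXtheorem3333} (plus symmetry) directly to the $(d{-}i)$-stopping subproblem with frozen parameters $(S_1,\dots,S_i)$ to peel off one index and recover \eqref{TEXEquation2121}. Your first route via explicit interchange of $\rho_{S_i,\tau}$ with the inner essential supremum (using directedness from Property \ref{TEXPROperty-3-1} and the monotone Fatou property (vii)) also works but amounts to re-proving the Reduction Theorem inside the induction; the paper avoids this by invoking Theorem \ref{TEXtheorem3333} as a black box at each step, which is cleaner and is exactly what you identify as the preferred presentation.
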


\begin{proof}
We know, by symmetry and by the Reduction Theorem \ref{TEXtheorem3333}, that 
\begin{align*}
\phi_{i}(S_{1}, ..., S_{i}) &= \esssup_{\tau \in \Theta_{S_{i}}}\rho_{S_{i}, \tau}[\phi_{i+1}(S_{1}, ..., S_{i}, \tau)]\\
&= \esssup_{(\tau_{i+1}, ..., \tau_{d}) \in \mathcal{O}^{d-i}_{S_{i}}}\rho_{S_{i}, \tau_{i+1} \vee ... \vee \tau_{d}}[\psi(S_{1}, ..., S_{i}, \tau_{i+1}, ..., \tau_{d})].
\end{align*}
We prove the result by backward induction relying on the symmetry and on the Reduction Theorem \ref{TEXtheorem3333}. We have, by definition of $\phi_{d-1}(S_{1}, ..., S_{d-1})$,
$$\phi_{d-1}(S_{1}, ..., S_{d-1}) = \esssup_{\tau \in \Theta_{S_{d-1}}}\rho_{S_{d-1}, \tau}[\psi(S_{1}, ..., S_{d-1}, \tau)].$$
We suppose, by backward induction, that
\begin{align*}
\phi_{i+1}(S_{1}, ..., S_{i+1}) = \esssup_{(\tau_{i+2}, ..., \tau_{d}) \in \mathcal{O}_{S_{i+1}}^{d-(i+1)}} \rho_{S_{i+1}, \tau_{i+2} \vee ... \vee \tau_{d}}[\psi(S_{1}, ..., S_{i+1}, \tau_{i+2}, ..., \tau_{d})].
\end{align*}
We will show this property at rank $i$. If we do so, then, we will conclude by the symmetry and by the Reduction Theorem \ref{TEXtheorem3333}. We have, by Eq. \eqref{TEXEquation2121},
\begin{align*}
\phi_{i}(S_{1}, ..., S_{i}) = \esssup_{\tau \in \Theta_{S_{i}}}\rho_{S_{i}, \tau}[\phi_{i+1}(S_{1}, ..., S_{i}, \tau)].
\end{align*}
By replacing, we get
\begin{equation}\label{TEXEquation222222}
\begin{aligned}
&\phi_{i}(S_{1}, ..., S_{i})\\
 &= \esssup_{\tau \in \Theta_{S_{i}}}\rho_{S_{i}, \tau}[\esssup_{(\tau_{i+2}, ..., \tau_{d}) \in \mathcal{O}_{\tau}^{d-(i+1)}}\rho_{\tau, \tau_{i+2} \vee ... \vee \tau_{d}}[\psi(S_{1}, ..., S_{i}, \tau, \tau_{i+2}, ..., \tau_{d})]]
\end{aligned}
\end{equation}
We define:
\begin{equation}
u_{i}(S_{1}, ..., S_{i}) \coloneqq \esssup_{\tau_{i+1}, ..., \tau_{d} \in \mathcal{O}_{S_{i}}^{d-i}}\rho_{S_{i}, \tau_{i+1} \vee ... \vee \tau_{d}}[\psi(S_{1}, ..., S_{i}, \tau_{i+1}, ..., \tau_{d})].
\end{equation}
We will show that $u_{i}(S_{1}, ..., S_{i}) = \phi_{i}(S_{1}, ..., S_{i})$.\\
[0.2cm]
By the Reduction Theorem \ref{TEXtheorem3333} and the symmetry,
\begin{equation}\label{TEXEquation232323}
\begin{aligned}
&u_{i}(S_{1}, ..., S_{i})\\ 
&= \esssup_{\tau \in \Theta_{S_{i}}}\rho _{S_{i}, \tau}[\esssup_{(\tau_{i+2}, ..., \tau_{d}) \in \mathcal{O}^{d-(i+1)}_{\tau}}\rho_{\tau, \tau_{i+2} \vee ... \vee \tau_{d}}[\psi(S_{1}, ..., S_{i}, \tau, \tau_{i+2}, ..., \tau_{d})]].
\end{aligned}
\end{equation}
From Eq. \eqref{TEXEquation222222} and \eqref{TEXEquation232323}, we get 
$$\phi_{i}(S_{1}, ..., S_{i}) = u_{i}(S_{1}, ..., S_{i}),$$
which is what we wanted to show.

\end{proof}

\paragraph{Some particular examples of symmetric rewards}
\begin{remark}
A well-known example of a symmetric reward family is the \textbf{additive reward}:
$$\psi(\tau_{1}, ..., \tau_{d}) = \eta(\tau_{1}) + \eta(\tau_{2}) + ... + \eta(\tau_{d}),$$
where $\eta = (\eta(\tau))_{\tau \in \Theta}$ is an admissible, $p$-integrable family of random variables. In the particular case, where $\rho_{S, \tau} = \mathbbm{E}[\cdot|\mathcal{F}_{S}]$, the optimisation problem simplifies and a direct study of $V(S)$ gives that $V(S) = d\bar{v}(S)$, where $\bar{v}(S) \coloneqq \esssup_{\tau \in \Theta_{S}} \mathbbm{E}[\eta(\tau)|\mathcal{F}_{S}]$ (cf. \cite{Kobylanski-Quenez-Mironescu-2}). However, in the case where $\rho_{S, \tau}$ is non-linear, we do not have such a simple expression even in the case where $\rho_{S, \tau}$ is assumed to be translation invariant.

\end{remark}

\noindent
If we assume moreover, that $\rho$ is \textbf{translation invariant}\footnote{We say that $\rho$ is \textbf{translation invariant}, if for all $S, \tau \in \Theta$, for all $L \in L^{p}(\mathcal{F}_{S})$, $\rho_{S, \tau}[\eta + L] = \rho_{S, \tau}[\eta] + L$.}, we get (cf. Eq. \eqref{TEXequation3-18} from Proposition \ref{TEXPROPosition6666}),
\begin{align*}
\phi_{d-1}(S_{1}, ..., S_{d-1}) &= \esssup_{\tau \in \Theta_{S_{d-1}}}\rho_{S_{d-1}, \tau}[\eta(S_{1}) + ... +\eta(S_{d-1}) + \eta(\tau)]\\
&= \esssup_{\tau \in \Theta_{S_{d-1}}} \eta(S_{1}) + ... + \eta(S_{d-1}) + \rho_{S_{d-1}, \tau}[\eta(\tau)]\\
&= \sum_{i =1}^{d-1}\eta(S_{i}) + \esssup_{\tau \in \Theta_{S_{d-1}}}\rho_{S_{d-1}, \tau}[\eta(\tau)].
\end{align*}
For $\bar{\tau} \in \Theta$, let us denote by $\bar{v}(\bar{\tau})$ the following non-linear \emph{single} optimal stopping problem:
$$\bar{v}_{d-1}(\bar{\tau}) \coloneqq \esssup_{\tau \in \Theta_{\bar{\tau}}}\rho_{\bar{\tau}, \tau}[\eta(\tau)].$$
Then, we have:
\begin{align*}
\phi_{d-1}(S_{1}, ..., S_{d-1}) = \sum_{i =1}^{d-1}\eta(S_{i}) + \bar{v}_{d-1}(S_{d-1}).
\end{align*}
By the definition \eqref{TEXEquation2121} from Proposition \ref{TEXPROPosition6666},
\begin{align*}
\phi_{d-2}(S_{1}, ..., S_{d-2}) &= \esssup_{\tau \in \Theta_{S_{d-2}}}\rho_{S_{d-2}, \tau}[\phi_{d-1}(S_{1}, ..., S_{d-2}, \tau)]\\
&= \esssup_{\tau \in \Theta_{S_{d-2}}}\rho_{S_{d-2}, \tau}[\sum_{i=1}^{d-2}\eta(S_{i}) + \eta(\tau) + \bar{v}_{d-1}(\tau)]\\
&= \sum_{i=1}^{d-2}\eta(S_{i}) + \esssup_{\tau \in \Theta_{S_{d-2}}}\rho_{S_{d-2}, \tau}[\eta(\tau) + \bar{v}_{d-1}(\tau)]\\
&= \sum_{i=1}^{d-2}\eta(S_{i}) + \bar{v}_{d-2}(S_{d-2}),
\end{align*}
where we have used also the translation invariance of $\rho$ for the last but one equality, and where $\bar{v}_{d-2}(S_{d-2}) \coloneqq \esssup_{\tau \in \Theta_{S_{d-2}}}\rho_{S_{d-2}, \tau}[\eta(\tau) + \bar{v}_{d-1}(\tau)]$.\\
[0.2cm]
By induction, we have:
\begin{align*}
\phi_{i}(S_{1}, ..., S_{i}) = \sum_{k=1}^{i}\eta(S_{k}) + \bar{v}_{i}(S_{i}),
\end{align*}
where $\bar{v}_{i}(S_{i}) \coloneqq \esssup_{\tau \in \Theta_{S_{i}}}\rho_{S_{i}, \tau}[\eta(S_{i+1}) + \bar{v}_{i+1}(\tau)]$.\\[0.2cm]
By Proposition \ref{TEXPROPosition6666}, $V(S) = \esssup_{\tau \in \Theta_{S}}\rho_{S, \tau}[\phi_{1}(\tau)]$. We have thus brought the non-linear $d$-optimal stopping problem to $d$ non-linear single optimal stopping problems (nested in each other as explained above).
\begin{remark}
We can consider the case where there exists $n$ (not depending on $\omega$) such that $\theta_{k}(\omega) = T$, for all $k \geq n$. The particular case of \textbf{swing options} enters into the above additive framework. In this case, the reward is additive (as before), and the stopping times of the sequence $(\theta_{0}, \theta_{1}, ..., \theta_{n})$ are equi-distant from each other by the fixed distance $\delta > 0$. The distance $\delta > 0$ is often referred to as the refracting time. More precisely, we have $\theta_{0} = 0, \theta_{1} = \delta, ..., \theta_{n} = n\delta = T$.

\end{remark}
\noindent
Another example is the example of the \textbf{multiplicative reward}, that is:
\begin{align*}
\psi(\tau_{1}, ..., \tau_{d}) = \eta(\tau_{1}) \times ... \times \eta(\tau_{d}),
\end{align*}
where $\eta = (\eta(\tau))_{\tau \in \Theta}$ is an admissible, $p$-integrable family of random variables. In this case, if we assume that $\eta(\tau) \geq 0$, for each $\tau \in \Theta$, and that $\rho$ is \textbf{positively homogeneous}\footnote{We say that $\rho$ is positively homogeneous, if, for all $S, \tau \in \Theta$, for all non-negative $L \in L^{p}(\mathcal{F}_{S})$, $\rho_{S, \tau}(L\eta) = L\rho_{S, \tau}(\eta)$.}, we get:
\begin{align*}
\phi_{d-1}(S_{1}, ..., S_{d-1}) &= \esssup_{\tau \in S_{d-1}}\rho_{S_{d-1}, \tau}[\eta(S_{1}) \times ... \times \eta(S_{d-1}) \times \eta(\tau)]\\
&= \eta(S_{1}) \times ... \times \eta(S_{d-1}) \times \esssup_{\tau \in S_{d-1}}\rho_{S_{d-1}, \tau}[\eta(\tau)]\\
&= \eta(S_{1}) \times ... \times \eta(S_{d-1}) \times \tilde{v}_{d-1}(S_{d-1}),
\end{align*}
where we have used the positive homogeneity of $\rho$, and where $\tilde{v}_{d-1}(S_{d-1}) \coloneqq \esssup_{\tau \in S_{d-1}}\rho_{S_{d-1}, \tau}[\eta(\tau)]$.\\
[0.2cm]
Moreover, by \eqref{TEXEquation2121}
\begin{align*}
\phi_{d-2}(S_{1}, ..., S_{d-2}) &= \esssup_{\tau \in \Theta_{S_{d-2}}}\rho_{S_{d-2}, \tau}[\phi_{d-1}(S_{1}, ..., S_{d-2}, \tau)]\\
&=  \esssup_{\tau \in \Theta_{S_{d-2}}}\rho_{S_{d-2}, \tau}[\eta(S_{1}) \times ... \times \eta(S_{d-2}) \times \tilde{v}_{d-1}(\tau)].
\end{align*}
By using again the positive homogeneity, we get 
\begin{align*}
\phi_{d-2}(S_{1}, ..., S_{d-2}) &= \eta(S_{1}) \times ... \times \eta(S_{d-2}) \times \esssup_{\tau \in \Theta_{S_{d-2}}}\rho_{S_{d-2}, \tau}[\eta(\tau)\tilde{v}_{d-1}(\tau)]\\
&= \eta(S_{1}) \times ... \times \eta(S_{d-2}) \times \tilde{v}_{d-2}(S_{d-2}),
\end{align*}
where $\tilde{v}_{d-2}(S_{d-2}) \coloneqq \esssup_{\tau \in \Theta_{S_{d-2}}}\rho_{S_{d-2}, \tau}[\eta(\tau)\tilde{v}_{d-1}(\tau)]$.\\[0.2cm]
By induction, we have:
\begin{equation}
\phi_{i}(S_{1}, ..., S_{i}) = \eta(S_{1}) \times ... \times \eta(S_{i}) \times \tilde{v}_{i}(S_{i}),
\end{equation}
where $\tilde{v}_{i}(S_{i}) \coloneqq \esssup_{\tau \in \Theta_{S_{i}}} \rho_{S_{i}, \tau}[\eta(\tau)\tilde{v}_{i+1}(\tau)]$.\\
[0.2cm]
By Proposition \ref{TEXPROPosition6666}, we get: $V(S) = \esssup_{\tau \in \Theta_{S}}\rho_{S, \tau}[\phi_{1}(\tau)]$. This reasoning illustrates that, in the case of a multiplicative reward, we can solve the non-linear optimal multiple  stopping problem by solving successively non-linear single optimal stopping problems. 

\subsection{Optimal stopping times in the non-linear optimal $d$-stopping problem} \label{SubSection4-2}
\begin{proposition}[Construction of optimal stopping times for the non-linear optimal $d$-stopping problem. Sufficient condition] We assume that:\\
[0.2cm]
i) $\theta^{*} \in \Theta_{S}$ is optimal for the problem with value $u(S)$.\\
[0.2cm]
ii) For each $i \in \{1, ..., d\}$, the vector $\theta^{(-i), *} \coloneqq (\theta_{1}^{(-i), *}, ..., \theta_{i-1}^{(-i), *}, \theta_{i+1}^{(-i), *}, ...,\theta_{d}^{(-i), *}) \in \Theta^{d-1}_{\theta^{*}}$ is optimal for the problem with value $u^{(i)}(\theta^{*})$\\
[0.2cm]
Let $B_{1}, ..., B_{d}$ be a partition of $\Omega$ such that for each $i \in \{1, ..., d\}$, $B_{i}$ is $\mathcal{F}_{\theta^{*}}$-measurable and $\phi(\theta^{*}) = u^{(i)}(\theta^{*})$ a.s. on $B_{i}$. Then, the vector of $d$ Bermudan stopping times $(\tau_{1}^{*}, ..., \tau_{d}^{*})$ defined by:\\
[0.2cm]
$\tau_{1}^{*} = \theta^{*}\mathbbm{1}_{B_{1}} + \sum^{d}_{i=2}\theta_{1}^{(-i), *}\mathbbm{1}_{B_{i}}$, where $\theta_{1}^{(-i), *}$ denotes the first component of the vector $\theta^{(-i), *}$;\\
[0.2cm]
$\tau_{2}^{*} = \theta^{*}\mathbbm{1}_{B_{2}} + \sum^{d}_{i=1, i\neq2}\theta_{2}^{(-i), *}\mathbbm{1}_{B_{i}}$, where $\theta_{2}^{(-i), *}$ denotes the second component of the vector $\theta^{(-i), *}$;\\
[0.2cm]
...\\
[0.2cm]
$\tau_{d}^{*} = \theta^{*}\mathbbm{1}_{B_{d}} + \sum^{d-1}_{i=1}\theta_{d}^{(-i), *}\mathbbm{1}_{B_{i}}$, where $\theta_{d}^{(-i), *}$ denotes the final component of the vector $\theta^{(-i), *}$, is optimal for the non-linear optimal $d$-stopping problem.

\end{proposition}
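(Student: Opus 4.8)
The proof runs parallel to that of Proposition~\ref{sufficient-condition}, with the two-fold case split replaced by the $d$-fold partition $(B_i)_{i=1}^{d}$ and property (vi ter) replaced by Property~\ref{TEXPROperty-3-1}. For $i \in \{1,\dots,d\}$, write $\sigma^{(i)} \coloneqq (\theta_1^{(-i),*},\dots,\theta_{i-1}^{(-i),*},\theta^{*},\theta_{i+1}^{(-i),*},\dots,\theta_d^{(-i),*})$ for the $d$-tuple obtained by inserting $\theta^{*}$ in the $i$-th slot of $\theta^{(-i),*}$, and set $M_i \coloneqq \theta^{*} \vee \bigvee_{j\neq i}\theta_j^{(-i),*}$, so that $M_i$ is the maximal component of $\sigma^{(i)}$ while $\theta^{*}$ is its minimal component; note also $\sigma^{(i)}\in\Theta^{d}$, since all its components belong to $\Theta$.

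First I would collect the pointwise facts about the constructed times. Since each $\theta_j^{(-i),*}\in\Theta_{\theta^{*}}$, each $\tau_j^{*}$ is obtained from $\theta^{*}$ and finitely many $\theta_j^{(-i),*}$ by concatenation over the $\mathcal{F}_{\theta^{*}}$-measurable sets $B_i$; as $\Theta$ is stable under such concatenations and all the pieces dominate $\theta^{*}\geq S$, we get $(\tau_1^{*},\dots,\tau_d^{*})\in\Theta_S^{d}$. On $B_i$ one has $\tau_i^{*}=\theta^{*}$ and $\tau_j^{*}=\theta_j^{(-i),*}$ for $j\neq i$; hence the tuple $(\tau_1^{*},\dots,\tau_d^{*})$ and the tuple $\sigma^{(i)}$ agree componentwise on $B_i$, and $\tau_1^{*}\wedge\dots\wedge\tau_d^{*}=\theta^{*}$ on $B_i$. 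Since the $B_i$ form a partition of $\Omega$, it follows that $\tau_1^{*}\wedge\dots\wedge\tau_d^{*}=\theta^{*}$ a.s.

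Next I would compute $u(S)$. By assumption i), $u(S)=\rho_{S,\theta^{*}}[\phi(\theta^{*})]$, and by the definition of $\phi$ together with the defining property of the partition, $\phi(\theta^{*})=\sum_{i=1}^{d}\mathbbm{1}_{B_i}u^{(i)}(\theta^{*})$. By assumption ii), the optimality of $\theta^{(-i),*}$ for $u^{(i)}(\theta^{*})$ gives $u^{(i)}(\theta^{*})=\rho_{\theta^{*},M_i}[\psi(\sigma^{(i)})]$. Applying Property~\ref{TEXPROperty-3-1} with $A=B_i$, to the tuples $(\tau_1^{*},\dots,\tau_d^{*})$ and $\sigma^{(i)}$ (the hypotheses hold because $B_i\in\mathcal{F}_{\theta^{*}}=\mathcal{F}_{\tau_1^{*}\wedge\dots\wedge\tau_d^{*}}$ and the two tuples coincide on $B_i$, both having minimal component $\theta^{*}$), we obtain
\[
\mathbbm{1}_{B_i}\,u^{(i)}(\theta^{*})=\mathbbm{1}_{B_i}\,\rho_{\theta^{*},\,\tau_1^{*}\vee\dots\vee\tau_d^{*}}[\psi(\tau_1^{*},\dots,\tau_d^{*})].
\]
Summing over $i$ and using $\sum_{i}\mathbbm{1}_{B_i}=1$ a.s. yields $\phi(\theta^{*})=\rho_{\theta^{*},\,\tau_1^{*}\vee\dots\vee\tau_d^{*}}[\psi(\tau_1^{*},\dots,\tau_d^{*})]$. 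Substituting this into $u(S)=\rho_{S,\theta^{*}}[\phi(\theta^{*})]$ and invoking the consistency property (v) of $\rho$ (valid since $S\leq\theta^{*}=\tau_1^{*}\wedge\dots\wedge\tau_d^{*}\leq\tau_1^{*}\vee\dots\vee\tau_d^{*}$), we get $u(S)=\rho_{S,\,\tau_1^{*}\vee\dots\vee\tau_d^{*}}[\psi(\tau_1^{*},\dots,\tau_d^{*})]$. Finally, the Reduction Theorem~\ref{TEXtheorem3333} gives $V(S)=u(S)$, hence $V(S)=\rho_{S,\,\tau_1^{*}\vee\dots\vee\tau_d^{*}}[\psi(\tau_1^{*},\dots,\tau_d^{*})]$, which is precisely the optimality of $(\tau_1^{*},\dots,\tau_d^{*})$ for the $d$-stopping problem.

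The argument involves no limiting or regularity considerations, so there is no serious obstacle; the only points requiring care are combinatorial bookkeeping: verifying that the patched tuple $(\tau_1^{*},\dots,\tau_d^{*})$ agrees on each $B_i$ with the tuple $\sigma^{(i)}$ realising $u^{(i)}(\theta^{*})$ (in particular that $\theta^{*}$ lands in the $i$-th slot), and checking that Property~\ref{TEXPROperty-3-1} applies with $A=B_i$, which rests on $B_i$ being $\mathcal{F}_{\theta^{*}}$-measurable and on the identity $\theta^{*}=\tau_1^{*}\wedge\dots\wedge\tau_d^{*}$ established in the first step. The remaining checks---$p$-integrability of $\psi(\tau_1^{*},\dots,\tau_d^{*})$, membership of the $\tau_j^{*}$ in $\Theta_S$, and $\phi(\theta^{*})\in L^{p}(\mathcal{F}_{\theta^{*}})$---are routine.
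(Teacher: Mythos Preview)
Your proof is correct and follows essentially the same approach as the paper: establish $\tau_1^{*}\wedge\dots\wedge\tau_d^{*}=\theta^{*}$, decompose $\phi(\theta^{*})=\sum_i\mathbbm{1}_{B_i}u^{(i)}(\theta^{*})$, invoke the optimality in ii) together with Property~\ref{TEXPROperty-3-1} on each $B_i$ to identify $\mathbbm{1}_{B_i}u^{(i)}(\theta^{*})$ with $\mathbbm{1}_{B_i}\rho_{\theta^{*},\tau_1^{*}\vee\dots\vee\tau_d^{*}}[\psi(\tau_1^{*},\dots,\tau_d^{*})]$, and finish with consistency and the Reduction Theorem. Your write-up is in fact slightly more careful than the paper's in explicitly checking that $(\tau_1^{*},\dots,\tau_d^{*})\in\Theta_S^{d}$ and that the hypotheses of Property~\ref{TEXPROperty-3-1} are met.
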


\begin{proof}
We have $\tau_{1}^{*} \wedge \tau_{2}^{*} \wedge ... \wedge \tau_{d}^{*} = \theta^{*}$ (as ($B_{1}, ..., B_{d}$) is a partition and the elements of $\theta^{(-i), *}$ are in $\Theta_{\theta^{*}}$). Moreover, on $B_{i}$, $\tau_{1}^{*} \wedge \tau_{2}^{*} \wedge ... \wedge \tau_{d}^{*} = \theta^{*} = \tau^{*}_{i}$ (where we have used the definition of $\tau_{i}^{*}$), and on $B_{i}$, $\tau_{1}^{*} \vee \tau_{2}^{*} \vee ... \vee \tau_{d}^{*} = \theta^{(-i), *}_{1} \vee ... \vee \theta^{(-i), *}_{i-1} \vee \theta^{*} \vee \theta^{(-i), *}_{i+1} \vee ... \vee \theta^{(-i), *}_{d} = \theta^{(-i), *}_{1} \vee ... \vee \theta^{(-i), *}_{i-1} \vee \theta^{(-i), *}_{i+1} \vee ... \vee \theta^{(-i), *}_{d} = \tau^{*}_{1} \vee ... \vee \tau^{*}_{i-1} \vee \tau^{*}_{i+1} \vee ... \vee \tau^{*}_{d}$.\\
[0.2cm]
We have, by i), and by definition of $(B_{1}, ..., B_{d})$,
\begin{align*}
u(S) = \rho_{S, \theta^{*}}[\phi(\theta^{*})] &= \rho_{S, \theta^{*}}[\sum^{d}_{i=1}\mathbbm{1}_{B_{i}}\phi(\theta^{*})]\\
&= \rho_{S, \theta^{*}}[\sum^{d}_{i=1}\mathbbm{1}_{B_{i}}u^{(i)}(\theta^{*})] = \rho_{S, \tau_{1}^{*} \wedge \tau_{2}^{*} \wedge ... \wedge \tau_{d}^{*}}[\sum^{d}_{i=1}\mathbbm{1}_{B_{i}}u^{(i)}(\theta^{*})].
\end{align*}
Now, by ii), we have for each $i \in \{1, ..., d\}$,
\begin{align*}
&\mathbbm{1}_{B_{i}}u^{(i)}(\theta^{*})\\ 
&= \mathbbm{1}_{B_{i}}\rho_{\theta^{*}, \theta_{1}^{(-i), *} \vee ... \vee \theta_{i-1}^{(-i), *} \vee \theta^{*} \vee \theta_{i+1}^{(-i), *} \vee ... \vee \theta_{d}^{(-i), *}}[\psi(\theta_{1}^{(-i), *}, ..., \theta_{i-1}^{(-i), *}, \theta^{*}, \theta_{i+1}^{(-i), *}, ..., \theta_{d}^{(-i), *})]\\
&= \mathbbm{1}_{B_{i}}\rho_{\tau^{*}_{1} \wedge ... \wedge \tau^{*}_{d}, \tau^{*}_{1} \vee ... \vee \tau^{*}_{d}}[\psi(\tau^{*}_{1}, ..., \tau^{*}_{i-1}, \tau^{*}_{i}, \tau^{*}_{i+1}, ..., \tau^{*}_{d})],
\end{align*}
where we have used the definition of $\tau^{*}_{i}$ and Property \ref{TEXPROperty-3-1}.\\
[0.2cm]
By putting everything together, we get:
\begin{align*}
u(S) &=  \rho_{S, \tau_{1}^{*} \wedge ... \wedge \tau_{d}^{*}}[\sum^{d}_{i=1}\mathbbm{1}_{B_{i}}\rho_{\tau^{*}_{1} \wedge ... \wedge \tau^{*}_{d}, \tau^{*}_{1} \vee ... \vee \tau^{*}_{d}}[\psi(\tau^{*}_{1}, ..., \tau^{*}_{i-1}, \tau^{*}_{i}, \tau^{*}_{i+1}, ..., \tau^{*}_{d})]]\\
&= \rho_{S, \tau_{1}^{*} \wedge ... \wedge \tau_{d}^{*}}[\rho_{\tau^{*}_{1} \wedge ... \wedge \tau^{*}_{d}, \tau^{*}_{1} \vee ... \vee \tau^{*}_{d}}[\psi(\tau^{*}_{1}, ..., \tau^{*}_{i-1}, \tau^{*}_{i}, \tau^{*}_{i+1}, ..., \tau^{*}_{d})]]\\
&= \rho_{S, \tau_{1}^{*} \vee ... \vee \tau_{d}^{*}}[\psi(\tau_{1}^{*}, ..., \tau_{d}^{*})],
\end{align*}
where we have used the time consistency of $\rho$ for the last equality.\\
[0.2cm]
We conclude that $(\tau^{*}_{1}, ..., \tau^{*}_{d})$ is optimal for $u(S)$. By Theorem \ref{TEXtheorem3333}, $u(S) = V(S)$, hence $(\tau^{*}_{1}, ..., \tau^{*}_{d})$ is optimal for $V(S)$.

\end{proof}

\begin{proposition}[A necessary condition for optimality in the non-linear optimal $d$-stopping problem]\label{multiple-necessary-condition}
Let $S \in \Theta$. Suppose that a given $d$-uple $(\tau^{*}_{1}, ..., \tau^{*}_{d})$ is optimal for $V(S)$. Let $(A_{i})_{i \in \{1, ..., d\}}$ be a partition of $\Omega$ such that, for each $i \in \{1, ..., d\}$, $A_{i}$ is $\mathcal{F}_{\tau^{*}_{1} \wedge ...\wedge \tau^{*}_{d}}$-measurable, and such that on $A_{i}$, $\tau^{*}_{i} = \tau^{*}_{1} \wedge ... \wedge \tau^{*}_{d}$. Then the following holds:\\
[0.2cm]
i) $\tau^{*}_{1} \wedge ... \wedge \tau^{*}_{d}$ is optimal for the problem with value $u(S)$.\\
[0.2cm]
Moreover, if $\rho$ is \textbf{strictly monotone}, then:\\
[0.2cm]
ii) For each $i \in \{1, ..., d\}$, $\theta^{(-i), *}$, defined by, $\theta^{(-i), *} \coloneqq\\ 
(\theta_{1}^{(-i), *}, ..., \theta_{i-1}^{(-i), *}, \theta_{i+1}^{(-i), *}, ...,\theta_{d}^{(-i), *}) \coloneqq (\tau^{*}_{1}, ..., \tau^{*}_{i-1}, \tau^{*}_{i+1}, ..., \tau^{*}_{d})$, is optimal for $u^{(i)}(\tau^{*}_{1} \wedge ... \wedge \tau^{*}_{d})$ on $A_{i}$.

\end{proposition}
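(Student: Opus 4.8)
The plan is to mirror the proof of Proposition~\ref{double-necessary-condition} (the case $d=2$), which in turn rests on \textbf{Step 2} of the proof of the Reduction Theorem~\ref{TEXtheorem3333}. Write $\bar{\tau} \coloneqq \tau_{1}^{*} \wedge \dots \wedge \tau_{d}^{*}$. Since $(\tau_{1}^{*}, \dots, \tau_{d}^{*})$ is optimal for $V(S)$ and, by Theorem~\ref{TEXtheorem3333}, $V(S) = u(S)$, the starting point is
$$u(S) = V(S) = \rho_{S, \tau_{1}^{*} \vee \dots \vee \tau_{d}^{*}}[\psi(\tau_{1}^{*}, \dots, \tau_{d}^{*})].$$

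First I would re-run \textbf{Step 2} of the proof of Theorem~\ref{TEXtheorem3333} verbatim, with $\tau_{i}^{*}$ in place of $\tau_{i}$ and the partition $(A_{i})_{i \in \{1,\dots,d\}}$ from the statement (which exists exactly as in that proof), obtaining the chain
\begin{align*}
\rho_{S, \tau_{1}^{*} \vee \dots \vee \tau_{d}^{*}}[\psi(\tau_{1}^{*}, \dots, \tau_{d}^{*})]
&= \rho_{S, \bar{\tau}}\Big[\textstyle\sum_{i=1}^{d}\mathbbm{1}_{A_{i}}\,\rho_{\bar{\tau}, \tau_{1}^{*} \vee \dots \vee \tau_{d}^{*}}[\psi(\tau_{1}^{*}, \dots, \tau_{d}^{*})]\Big]\\
&\le \rho_{S, \bar{\tau}}\Big[\textstyle\sum_{i=1}^{d}\mathbbm{1}_{A_{i}}\,u^{(i)}(\tau_{i}^{*})\Big]
\le \rho_{S, \bar{\tau}}[\phi(\bar{\tau})] \le u(S),
\end{align*}
where the first equality uses consistency of $\rho$ (note $S \le \bar{\tau}$) and that $(A_{i})$ is a partition, the first inequality uses property~(ii) of $\rho$ and the definition of $u^{(i)}$ (as in the Reduction proof), the second uses $u^{(i)}(\tau_{i}^{*}) \le \phi(\tau_{i}^{*})$ together with $\tau_{i}^{*} = \bar{\tau}$ on $A_{i}$, and the last is the definition of $u(S)$. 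Since the left-hand side equals $u(S)$, every inequality above is an equality. The equality in the final step reads $\rho_{S, \bar{\tau}}[\phi(\bar{\tau})] = u(S)$, i.e. $\bar{\tau} = \tau_{1}^{*} \wedge \dots \wedge \tau_{d}^{*}$ attains the essential supremum defining $u(S)$; this is statement~i).

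For ii), I would exploit the equality of the first two terms in the chain, namely
$$\rho_{S, \bar{\tau}}\Big[\textstyle\sum_{i=1}^{d}\mathbbm{1}_{A_{i}}\,\rho_{\bar{\tau}, \tau_{1}^{*} \vee \dots \vee \tau_{d}^{*}}[\psi(\tau_{1}^{*}, \dots, \tau_{d}^{*})]\Big] = \rho_{S, \bar{\tau}}\Big[\textstyle\sum_{i=1}^{d}\mathbbm{1}_{A_{i}}\,u^{(i)}(\tau_{i}^{*})\Big],$$
together with the pointwise inequality between the two arguments, which holds term by term because the $A_{i}$ are disjoint and $\rho_{\bar{\tau}, \tau_{1}^{*}\vee\dots\vee\tau_{d}^{*}}[\psi(\tau_{1}^{*},\dots,\tau_{d}^{*})] \le u^{(i)}(\tau_{i}^{*})$ on $A_{i}$ by the definition of $u^{(i)}$ and property~(ii) of $\rho$. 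Under strict monotonicity of $\rho_{S, \bar{\tau}}$ this forces the pointwise equality; multiplying by $\mathbbm{1}_{A_{i}}$ and using that $(A_{i})$ is a partition yields, on $A_{i}$,
$$\rho_{\bar{\tau}, \tau_{1}^{*} \vee \dots \vee \tau_{d}^{*}}[\psi(\tau_{1}^{*}, \dots, \tau_{d}^{*})] = u^{(i)}(\tau_{i}^{*}).$$
On $A_{i}$ we have $\tau_{i}^{*} = \bar{\tau}$, so by admissibility of the value family $(u^{(i)}(S))_{S \in \Theta}$ the right-hand side equals $u^{(i)}(\bar{\tau})$, while by Property~\ref{TEXPROperty-3-1} (together with property~(ii) of $\rho$) the left-hand side equals $\rho_{\bar{\tau}, \tau_{1}^{*} \vee \dots \vee \bar{\tau} \vee \dots \vee \tau_{d}^{*}}[\psi(\tau_{1}^{*}, \dots, \bar{\tau}, \dots, \tau_{d}^{*})]$ on $A_{i}$; the latter is precisely the reward attached to the candidate $\theta^{(-i),*} = (\tau_{1}^{*}, \dots, \tau_{i-1}^{*}, \tau_{i+1}^{*}, \dots, \tau_{d}^{*}) \in \Theta_{\bar{\tau}}^{d-1}$ in the problem defining $u^{(i)}(\bar{\tau})$. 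Hence that candidate is optimal for $u^{(i)}(\bar{\tau})$ on $A_{i}$, which is statement~ii).

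The main obstacle is the last identification: converting the scalar equality on $A_{i}$ into the assertion that the \emph{specific} $(d-1)$-vector $\theta^{(-i),*}$ is optimal, rather than only that some optimizer of $u^{(i)}(\bar{\tau})$ exists there. This is where one must track carefully which properties of $\rho$ license replacing $\tau_{i}^{*}$ by $\bar{\tau}$ \emph{simultaneously} in the time indices and inside the argument of $\psi$ — handled by Property~\ref{TEXPROperty-3-1}, a $d$-ary analogue of Remark~\ref{TEXREmark2} — and check that $(\tau_{j}^{*})_{j \ne i}$ genuinely lies in $\Theta_{\bar{\tau}}^{d-1}$ (each $\tau_{j}^{*} \ge \bar{\tau}$ by definition of $\bar{\tau}$). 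Everything else is bookkeeping identical to the $d=2$ case.
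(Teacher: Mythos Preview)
Your proof is correct and follows essentially the same route as the paper's: re-run Step~2 of the Reduction Theorem with $(\tau_i^*)$ in place of $(\tau_i)$, note that optimality of $(\tau_1^*,\dots,\tau_d^*)$ forces all inequalities in the resulting chain to be equalities (giving i)), and then use strict monotonicity of $\rho_{S,\bar\tau}$ to upgrade to the pointwise equality needed for ii). Your final paragraph, which carefully identifies the left-hand side on $A_i$ with the reward of the candidate $\theta^{(-i),*}$ via Property~\ref{TEXPROperty-3-1} and checks $\tau_j^*\in\Theta_{\bar\tau}$, is in fact more explicit than the paper's own proof, which simply writes ``Hence, statement ii) is proven'' after obtaining the pointwise equality.
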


\begin{proof}
We recall: 
\begin{equation}\label{TEXEQuation-3-2424}
V(S) = \rho_{S, \tau_{1}^{*} \vee ... \vee \tau_{d}^{*}}[\psi(\tau_{1}^{*}, ..., \tau_{d}^{*})] = u(S),
\end{equation} 
(due to the optimality of $(\tau_{1}^{*}, ..., \tau_{d}^{*})$ for $V(S)$ and to Theorem \ref{TEXtheorem3333}). By following the same arguments as in Step 2 of the proof of Theorem \ref{TEXtheorem3333}, with $\tau_{i}^{*}$ in place of $\tau_{i}$ (for each $i \in \{1, ..., d\}$), we get:
\begin{equation}\label{TEXEQuation-3-2525}
\begin{aligned}
\rho_{S, \tau_{1}^{*} \vee ... \vee \tau_{d}^{*}}[\psi(\tau_{1}^{*}, ..., \tau_{d}^{*})] &= \rho_{S, \tau^{*}_{1} \wedge ... \wedge \tau^{*}_{d}}[\sum^{d}_{i=1}\mathbbm{1}_{A^{i}}\rho_{\tau^{*}_{1} \wedge ... \wedge \tau^{*}_{d}, \tau^{*}_{1} \vee ... \vee \tau^{*}_{d}}[\psi(\tau^{*}_{1}, ..., \tau^{*}_{d})]]\\
&\leq \rho_{S, \tau^{*}_{1} \wedge ... \wedge \tau^{*}_{d}}[\sum^{d}_{i=1}\mathbbm{1}_{A^{i}}u^{(i)}(\tau^{*}_{i})]\\
&\leq \rho_{S, \tau^{*}_{1} \wedge ... \wedge \tau^{*}_{d}}[\sum^{d}_{i=1}\mathbbm{1}_{A^{i}}\phi(\tau^{*}_{i})]\\
&= \rho_{S, \tau_{1}^{*} \wedge ... \wedge \tau_{d}^{*}}[\phi(\tau_{1}^{*} \wedge ... \wedge \tau_{d}^{*})] \leq u(S).
\end{aligned}
\end{equation}
By Eq.\eqref{TEXEQuation-3-2424}, for the LHS of Eq.\eqref{TEXEQuation-3-2525} we have
$$\rho_{S, \tau_{1}^{*} \vee ... \vee \tau_{d}^{*}}[\psi(\tau_{1}^{*}, ..., \tau_{d}^{*})] = u(S).$$
Hence, all the inequalities in Eq.\eqref{TEXEQuation-3-2525} are equalities. Hence, $\tau_{1}^{*} \wedge ...\wedge \tau_{d}^{*}$ is optimal for $u(S)$. Moreover, 
\begin{align*}
\rho_{S, \tau^{*}_{1} \wedge ... \wedge \tau^{*}_{d}}[\sum^{d}_{i=1}\mathbbm{1}_{A^{i}}\rho_{\tau^{*}_{1} \wedge ... \wedge \tau^{*}_{d}, \tau^{*}_{1} \vee ... \vee \tau^{*}_{d}}[\psi(\tau^{*}_{1}, ..., \tau^{*}_{d})]] = \rho_{S, \tau^{*}_{1} \wedge ... \wedge \tau^{*}_{d}}[\sum^{d}_{i=1}\mathbbm{1}_{A^{i}}u^{(i)}(\tau^{*}_{i})].
\end{align*}
Furthermore, by definition of $u^{(i)}(\tau^{*}_{i})$ and by definition of $A^{i}$, we have:
\begin{align*}
\sum^{d}_{i=1}\mathbbm{1}_{A^{i}}\rho_{\tau^{*}_{1} \wedge ... \wedge \tau^{*}_{d}, \tau^{*}_{1} \vee ... \vee \tau^{*}_{d}}[\psi(\tau^{*}_{1}, ..., \tau^{*}_{d})] &\leq \sum^{d}_{i=1}\mathbbm{1}_{A^{i}}u^{(i)}(\tau^{*}_{i}) = \sum^{d}_{i=1}\mathbbm{1}_{A^{i}}u^{(i)}(\tau^{*}_{1} \wedge ... \wedge \tau^{*}_{d}).
\end{align*}
Hence, under the additional assumption that $\rho_{S, \tau^{*}_{1} \wedge ... \wedge \tau^{*}_{d}}$ is strictly monotone, we get:
$$\sum^{d}_{i=1}\mathbbm{1}_{A^{i}}\rho_{\tau^{*}_{1} \wedge ... \wedge \tau^{*}_{d}, \tau^{*}_{1} \vee ... \vee \tau^{*}_{d}}[\psi(\tau^{*}_{1}, ..., \tau^{*}_{d})] = \sum^{d}_{i=1}\mathbbm{1}_{A^{i}}u^{(i)}(\tau^{*}_{1} \wedge ... \wedge \tau^{*}_{d}).$$
Hence, statement ii) is proven.

\end{proof}

\section{Examples of non-linear operators}\label{Section5}

\subsection{Non-linear $g$-evaluations induced by BSDEs}
Let $(\Omega, \mathcal{F}, (\mathcal{F}_{t})_{t \in [0, T]}, P)$ be a filtered probability space, satisfying the usual conditions, and such that $(\mathcal{F}_{t})_{t \in [0, T]}$ is the natural completed filtration of a one-dimensional Brownian motion $W = (W_{t})_{t \in [0, T]}$. Let $p = 2$. We consider the operator $\rho_{S, \tau}[\cdot] = \mathcal{E}^{g}_{S, \tau}[\cdot]$, where $\mathcal{E}^{g}_{S, \tau}[\cdot]: L^{2}(\mathcal{F}_{\tau}) \to L^{2}(\mathcal{F}_{S})$ is the non-linear $g$-evaluation induced by the Backward Stochastic Differential Equation (BSDE) with standard Lipschitz driver $g$: 
$$y_{S} = \eta + \int^{\tau}_{S}g(u, y_{u}, z_{u})du - \int^{\tau}_{S}z_{u}dW_{u},$$
where $\eta \in L^{2}(\mathcal{F}_{\tau})$ and $\mathcal{E}_{S, \tau}^{g}[\eta] \coloneqq y_{S}$.\\
[0.2cm]
The operators $\rho_{S, \tau}[\cdot] = \mathcal{E}_{S, \tau}^{g}[\cdot]$ satisfy the properties: (i) (with $p=2$), (ii) (admissibility), (iii) (knowledge preserving property), (iv) (monotonicity) (without any further assumptions on $g$ as we placed ourselves in the Brownian framework for this example), (v) (consistency), (vi) (``generalized zero-one law''), (vii) (monotone Fatou property with respect to terminal condition) (which is even true with equality, in place of the inequality, due to the property of continuity with respect to the terminal condition), (viii) and (ix) (cf. Remark 3.3 from our paper \cite{Grigorova-Quenez-Peng-1}). Moreover, we note that $\rho_{S, \tau}[\cdot]$ is strictly monotone without any additional assumptions on $g$ (a property that we use in Proposition \ref{double-necessary-condition} and in Proposition \ref{multiple-necessary-condition}), as we placed ourselves in the Brownian framework. It remains to check that properties (vi bis) and (vi ter) hold true (as well as Property \ref{TEXPROperty-3-1} from Subsection \ref{SubSection4-2}, generalizing property (vi ter) to tackle the $d$-stopping case).\\
[0.2cm]
Let us first check (vi bis). We have:
\begin{align*}
\mathbbm{1}_{\{\tau_{1} = \tau_{1}'\}}\mathcal{E}^{g}_{\tau_{1}, \tau_{1}\vee\tau_{2}}[\psi(\tau_{1}, \tau_{2})] = \mathcal{E}^{g \mathbbm{1}_{\{\tau_{1} = \tau_{1}'\}}}_{\tau_{1}, \tau_{1}\vee\tau_{2}}[\mathbbm{1}_{\{\tau_{1} = \tau_{1}'\}}\psi(\tau_{1}, \tau_{2})],
\end{align*}
where we have used the convention in the notation from \cite{Grigorova-2}.\\
[0.2cm] 
By bi-admissibility of the family $\psi$, we have,\\ 
$\mathbbm{1}_{\{\tau_{1} = \tau_{1}'\}}\psi(\tau_{1}, \tau_{2}) = \mathbbm{1}_{\{\tau_{1} = \tau_{1}'\}}\psi(\tau_{1}', \tau_{2})$. Hence, 
\begin{align*}
\mathcal{E}^{g \mathbbm{1}_{\{\tau_{1} = \tau_{1}'\}}}_{\tau_{1}, \tau_{1}\vee\tau_{2}}[\mathbbm{1}_{\{\tau_{1} = \tau_{1}'\}}\psi(\tau_{1}, \tau_{2})] &= \mathcal{E}^{g^{\tau_{1}'\vee\tau_{2}}\mathbbm{1}_{\{\tau_{1}=\tau_{1}'\}}}_{\tau_{1}, T}[\mathbbm{1}_{\{\tau_{1}=\tau_{1}'\}}\psi(\tau_{1}', \tau_{2})]\\
&= \mathcal{E}^{g\mathbbm{1}_{\{\tau_{1}=\tau_{1}'\}}}_{\tau_{1}, \tau_{1}'\vee\tau_{2}}[\mathbbm{1}_{\{\tau_{1}=\tau_{1}'\}}\psi(\tau_{1}', \tau_{2})]\\ 
&= \mathbbm{1}_{\{\tau_{1}=\tau_{1}'\}}\mathcal{E}^{g}_{\tau_{1}, \tau_{1}'\vee\tau_{2}}[\psi(\tau_{1}', \tau_{2})].
\end{align*}
By using the same type of arguments, we can show that 
$$\mathbbm{1}_{\{\tau_{2}=\tau_{2}'\}}\rho_{\tau_{2}, \tau_{1}\vee\tau_{2}}[\psi(\tau_{1}, \tau_{2})] = \mathbbm{1}_{\{\tau_{2}=\tau_{2}'\}}\rho_{\tau_{2}', \tau_{1}\vee\tau_{2}'}[\psi(\tau_{1}, \tau_{2}')].$$
We now show (vi ter). Let $A$ be $\mathcal{F}_{\tau_{1}\wedge\tau_{2}}$-measurable, and $\tau_{1}\wedge\tau_{2} = \tau_{1}$ a.s. on $A$, and $\tau_{1}\vee\tau_{2} = \tau_{2}$ a.s. on $A$. We have:
\begin{align*}
\mathbbm{1}_{A}\mathcal{E}^{g}_{\tau_{1}\wedge\tau_{2}, \tau_{1}\vee\tau_{2}}[\psi(\tau_{1}\wedge\tau_{2}, \tau_{1}\vee\tau_{2})] &= \mathcal{E}^{g\mathbbm{1}_{A}}_{\tau_{1}\wedge\tau_{2}, \tau_{1}\vee\tau_{2}}[\mathbbm{1}_{A}\psi(\tau_{1}\wedge\tau_{2}, \tau_{1}\vee\tau_{2})]\\
&= \mathcal{E}^{g\mathbbm{1}_{A}}_{\tau_{1}\wedge\tau_{2}, \tau_{1}\vee\tau_{2}}[\mathbbm{1}_{A}\psi(\tau_{1}, \tau_{2})]\\
&= \mathbbm{1}_{A}\mathcal{E}^{g}_{\tau_{1}\wedge\tau_{2}, \tau_{1}\vee\tau_{2}}[\psi(\tau_{1}, \tau_{2})].
\end{align*}
Moreover,
\begin{align*}
\mathbbm{1}_{A}\mathcal{E}^{g}_{\tau_{1}\wedge\tau_{2}, \tau_{1}\vee\tau_{2}}[\psi(\tau_{1}\wedge\tau_{2}, \tau_{1}\vee\tau_{2})] &= \mathcal{E}^{g\mathbbm{1}_{A}}_{\tau_{1}\wedge\tau_{2}, \tau_{1}\vee\tau_{2}}[\mathbbm{1}_{A}\psi(\tau_{1}\wedge\tau_{2}, \tau_{1}\vee\tau_{2})]\\
&= \mathcal{E}^{g^{\tau_{1}\vee\tau_{2}}\mathbbm{1}_{A}}_{\tau_{1}\wedge\tau_{2}, T}[\mathbbm{1}_{A}\psi(\tau_{1}, \tau_{2})]\\
&= \mathcal{E}^{g^{\tau_{2}}\mathbbm{1}_{A}}_{\tau_{1}\wedge\tau_{2}, T}[\mathbbm{1}_{A}\psi(\tau_{1}, \tau_{2})]\\
&= \mathcal{E}^{g\mathbbm{1}_{A}}_{\tau_{1}\wedge\tau_{2}, \tau_{2}}[\mathbbm{1}_{A}\psi(\tau_{1}, \tau_{2})]\\
&= \mathbbm{1}_{A}\mathcal{E}^{g}_{\tau_{1}\wedge\tau_{2}, \tau_{2}}[\psi(\tau_{1}, \tau_{2})] = \mathbbm{1}_{A}\mathcal{E}^{g}_{\tau_{1}, \tau_{2}}[\psi(\tau_{1}, \tau_{2})].
\end{align*}
By the same arguments, we can show that 
$$\mathbbm{1}_{A}\mathcal{E}^{g}_{\tau_{1}\wedge\tau_{2}, \tau_{1}\vee\tau_{2}}[\psi(\tau_{1}\vee\tau_{2}, \tau_{1}\wedge\tau_{2})] = \mathbbm{1}_{A}\mathcal{E}^{g}_{\tau_{1}, \tau_{2}}[\psi(\tau_{2}, \tau_{1})],$$
which shows (vi ter).\\
[0.2cm] 
The same type of reasoning can be used to prove Property \ref{TEXPROperty-3-1} on $\rho$ used in Section \ref{Section4}.

\subsection{Dynamic concave utilities}
In this example, $p=+\infty$. A representation result with an explicit form for the penalty term, for dynamic concave utilities (DCUs) was established in \cite{Delbaen}. By the results of \cite{Delbaen}, a dynamic concave utility $u_{S, \tau}: L^{\infty}(\mathcal{F}_{\tau}) \to L^{\infty}(\mathcal{F}_{S})$ satisfies the following representation:
\begin{equation} \label{TEXequation-3-5-2}
\begin{aligned}
u_{S, \tau}(\eta) &=\essinf_{Q: Q\sim P,Q=P \text{ on } \cf_S  } \Big\{E_{Q}[\eta|\mathcal{F}_{S}]+c_{S,\tau}(Q)  \Big\}\\
&= \essinf_{Q \in \mathcal{Q}_{S}} E_{Q}[\eta + \int^{\tau}_{S}f(u, \psi_{u}^{Q})du|\mathcal{F}_{S}],
\end{aligned}
\end{equation}
where the function $f$ is  such that $f(\cdot,\cdot,x)$ is predictable for any $x$; $f$ is a proper, convex function in the space variable $x$, and valued in $[0,+\infty]$, and the process $(\psi_{t}^{Q})$ is the process from the Doleans-Dade exponential representation for the density process $(Z^{Q}_{t})$, where
$Z_{t}^{Q} = \frac{dQ}{dP}|_{\mathcal{F}_{t}},$
and 
$$\mathcal{Q}_{S} = \{Q: Q \sim P, \psi^{Q}_{t} = 0, dt \otimes dP \text{ a.e. on } [[0, S[[, \; E_{Q}[\int^{T}_{S}f(s, \psi^{Q}_{s})ds] < +\infty \}.$$
As noted also in \cite{Grigorova-Quenez-Peng-1}, the dynamic concave utility $u_{S, \tau}(\cdot)$ depends on the second index only via the penalty term $c_{S, \tau}$. The DCUs satisfy the properties: (i) (with $p = +\infty$), (ii) (admissibility), (iii) (knowledge preserving property), (iv) (monotonicity), (v) (consistency). Moreover, $u_{S, \tau}(\cdot)$ satisfies (vi) (``generalized zero-one law''). Indeed, from the representation result \eqref{TEXequation-3-5-2}, we get: for $A \in \mathcal{F}_{S}$, and for $\tau, \tau'$, such that $\tau = \tau'$ on $A$,
\begin{align*}
\mathbbm{1}_{A}u_{S, \tau}(\eta) &= \essinf_{Q \in \mathcal{Q}_{S}}E_{Q}[\mathbbm{1}_{A}\eta + \mathbbm{1}_{A}\int^{\tau}_{S}f(u, \psi^{Q}_{u})du|\mathcal{F}_{S}]\\
&= \mathbbm{1}_{A} \essinf_{Q \in \mathcal{Q}_{S}}E_{Q}[\eta + \int^{\tau'}_{S}f(u, \psi^{Q}_{u})du|\mathcal{F}_{S}] = \mathbbm{1}_{A}u_{S, \tau'}(\eta).
\end{align*}
Furthermore, $u_{S, \tau}[\cdot]$ satisfies (viii) (left-upper-semicontinuity (LUSC) along Bermudan stopping times with respect to the terminal condition and the terminal time) (cf. Remark 3.11 in \cite{Grigorova-Quenez-Peng-1}), and property (ix) (cf. Remark 3.12 in \cite{Grigorova-Quenez-Peng-1}). We assume moreover that $u_{S, \tau}$ satisfies property (vii) (Fatou property with respect to terminal condition), that is, we deal with the DCUs of the form \eqref{TEXequation-3-5-2} satisfying Fatou property.\\
[0.2cm]
Let us check that $u_{S, \tau}$ satisfies the property (vi bis).\\
[0.2cm] 
For this, we first recall that, if $\eta$ is integrable, then: $\mathbbm{1}_{\{\tau=\tau'\}}E[\eta|\mathcal{F}_{\tau}] = \mathbbm{1}_{\{\tau=\tau'\}}E[\eta|\mathcal{F}_{\tau'}]$.\\
[0.2cm]
Moreover, if $Q \sim P$, then we will identify it with: $(\frac{dQ}{dP}|_{\mathcal{F}_{t}} = Z_{t}^{Q}, \text{ for } t \in [0, T])$. We also identify $\frac{dQ}{dP}|_{\mathcal{F}_{t}} = Z_{t}^{Q}$ with the process $(\psi_{s}^{Q})$ from the Doleans-Dade exponential representation of the density process.\\
[0.2cm]
By abuse of notation, we will write: $(\rho_{s}^{Q}) \in \mathcal{Q}_{\tau}$, if $Q \in \mathcal{Q}_{\tau}$.\\ 
[0.2cm]
Let $Q \in \mathcal{Q}_{\tau}$, then: on $\{\tau = \tau'\}$, the process $(\rho_{s}^{Q})$ satisfies $\rho_{s}^{Q} = 0$ $dt \otimes dP$-a.e. on $[[0, \tau'[[$, and $E_{Q}[\int^{T}_{\tau'}f(s, \rho_{s}^{Q})ds] < +\infty$. We thus have:
\begin{align*}
&\mathbbm{1}_{\{\tau_{1} = \tau_{1}'\}}u_{\tau_{1}, \tau_{1} \vee \tau_{2}}(\psi(\tau_{1}, \tau_{2}))\\ 
&= \mathbbm{1}_{\{\tau_{1} = \tau_{1}'\}}\essinf_{Q \in \mathcal{Q}_{\tau_{1}}}E_{Q}[\psi(\tau_{1}, \tau_{2}) + \int^{\tau_{1} \vee \tau_{2}}_{\tau_{1}}f(u, \psi^{Q}_{u})du|\mathcal{F}_{\tau_{1}}]\\
&= \mathbbm{1}_{\{\tau_{1} = \tau_{1}'\}}\essinf_{Q \in \mathcal{Q}_{\tau_{1}'}}E_{Q}[\mathbbm{1}_{\{\tau_{1} = \tau_{1}'\}}\psi(\tau_{1}, \tau_{2}) + \mathbbm{1}_{\{\tau_{1} = \tau_{1}'\}}\int^{\tau_{1} \vee \tau_{2}}_{\tau_{1}}f(u, \psi^{Q}_{u})du|\mathcal{F}_{\tau_{1}}],
\end{align*}
where we have used that $\mathbbm{1}_{\{\tau_{1} = \tau_{1}'\}}$ is $\mathcal{F}_{\tau_{1}}$-measurable.\\
[0.2cm]
Hence, by using that $\psi$ is bi-admissible,
\begin{align*}
&\mathbbm{1}_{\{\tau_{1} = \tau_{1}'\}}u_{\tau_{1}, \tau_{1} \vee \tau_{2}}(\psi(\tau_{1}, \tau_{2}))\\
&= \mathbbm{1}_{\{\tau_{1} = \tau_{1}'\}}\essinf_{Q \in \mathcal{Q}_{\tau_{1}'}}E_{Q}[\mathbbm{1}_{\{\tau_{1} = \tau_{1}'\}}\psi(\tau_{1}', \tau_{2}) + \mathbbm{1}_{\{\tau_{1} = \tau_{1}'\}}\int^{\tau_{1}' \vee \tau_{2}}_{\tau_{1}'}f(u, \psi^{Q}_{u})du|\mathcal{F}_{\tau_{1}}]\\
&= \mathbbm{1}_{\{\tau_{1} = \tau_{1}'\}}\essinf_{Q \in \mathcal{Q}_{\tau_{1}'}}\mathbbm{1}_{\{\tau_{1} = \tau_{1}'\}}E_{Q}[\psi(\tau_{1}', \tau_{2}) + \int^{\tau_{1}' \vee \tau_{2}}_{\tau_{1}'}f(u, \psi^{Q}_{u})du|\mathcal{F}_{\tau_{1}}]\\
&= \mathbbm{1}_{\{\tau_{1} = \tau_{1}'\}}\essinf_{Q \in \mathcal{Q}_{\tau_{1}'}}\mathbbm{1}_{\{\tau_{1} = \tau_{1}'\}}E_{Q}[\psi(\tau_{1}', \tau_{2}) + \int^{\tau_{1}' \vee \tau_{2}}_{\tau_{1}'}f(u, \psi^{Q}_{u})du|\mathcal{F}_{\tau_{1}'}]\\
&= \mathbbm{1}_{\{\tau_{1} = \tau_{1}'\}}u_{\tau_{1}', \tau_{1}' \vee \tau_{2}}(\psi(\tau_{1}', \tau_{2})).
\end{align*}
Hence, (vi bis) holds true.\\
[0.2cm]
Let us now check (vi ter).\\
[0.2cm]
Let $A$ be $\mathcal{F}_{\tau_{1} \wedge \tau_{2}}$-measurable and let $\tau_{1}, \tau_{2}$ be such that $\tau_{1} \wedge \tau_{2} = \tau_{1}$ a.s. on $A$ and $\tau_{1} \vee \tau_{2} = \tau_{2}$ a.s. on $A$. Then, by using that $A$ is $\mathcal{F}_{\tau_{1} \wedge \tau_{2}}$-measurable,
\begin{align*}
&\mathbbm{1}_{A}u_{\tau_{1} \wedge \tau_{2}, \tau_{1} \vee \tau_{2}}(\psi(\tau_{1} \wedge \tau_{2}, \tau_{1} \vee \tau_{2}))\\
&= \mathbbm{1}_{A}\essinf_{Q \in \mathcal{Q}_{\tau_{1} \wedge \tau_{2}}}E_{Q}[\psi(\tau_{1} \wedge \tau_{2}, \tau_{1} \vee \tau_{2}) + \int^{\tau_{1} \vee \tau_{2}}_{\tau_{1} \wedge \tau_{2}}f(u, \psi^{Q}_{u})du|\mathcal{F}_{\tau_{1} \wedge \tau_{2}}]\\
&= \mathbbm{1}_{A}\essinf_{Q \in \mathcal{Q}_{\tau_{1}}}E_{Q}[\mathbbm{1}_{A}\psi(\tau_{1} \wedge \tau_{2}, \tau_{1} \vee \tau_{2}) + \mathbbm{1}_{A}\int^{\tau_{1} \vee \tau_{2}}_{\tau_{1} \wedge \tau_{2}}f(u, \psi^{Q}_{u})du|\mathcal{F}_{\tau_{1} \wedge \tau_{2}}]\\
&= \mathbbm{1}_{A}\essinf_{Q \in \mathcal{Q}_{\tau_{1}}}E_{Q}[\mathbbm{1}_{A}\psi(\tau_{1}, \tau_{2}) + \mathbbm{1}_{A}\int^{\tau_{2}}_{\tau_{1}}f(u, \psi^{Q}_{u})du|\mathcal{F}_{\tau_{1} \wedge \tau_{2}}]\\
&= \mathbbm{1}_{A}\essinf_{Q \in \mathcal{Q}_{\tau_{1}}}\mathbbm{1}_{A}E_{Q}[\psi(\tau_{1}, \tau_{2}) + \int^{\tau_{2}}_{\tau_{1}}f(u, \psi^{Q}_{u})du|\mathcal{F}_{\tau_{1}}]\\
&= \mathbbm{1}_{A}u_{\tau_{1}, \tau_{2}}(\psi(\tau_{1}, \tau_{2})).
\end{align*}
Also,
\begin{align*}
&\mathbbm{1}_{A}u_{\tau_{1} \wedge \tau_{2}, \tau_{1} \vee \tau_{2}}(\psi(\tau_{1} \wedge \tau_{2}, \tau_{1} \vee \tau_{2}))\\
&= \mathbbm{1}_{A}\essinf_{Q \in \mathcal{Q}_{\tau_{1} \wedge \tau_{2}}}E_{Q}[\mathbbm{1}_{A}\psi(\tau_{1} \wedge \tau_{2}, \tau_{1} \vee \tau_{2}) + \mathbbm{1}_{A}\int^{\tau_{1} \vee \tau_{2}}_{\tau_{1} \wedge \tau_{2}}f(u, \psi^{Q}_{u})du|\mathcal{F}_{\tau_{1} \wedge \tau_{2}}]\\
&= \mathbbm{1}_{A}\essinf_{Q \in \mathcal{Q}_{\tau_{1} \wedge \tau_{2}}}E_{Q}[\mathbbm{1}_{A}\psi(\tau_{1}, \tau_{2}) + \mathbbm{1}_{A}\int^{\tau_{1} \vee \tau_{2}}_{\tau_{1} \wedge \tau_{2}}f(u, \psi^{Q}_{u})du|\mathcal{F}_{\tau_{1} \wedge \tau_{2}}]\\
&= \mathbbm{1}_{A}u_{\tau_{1} \wedge \tau_{2}, \tau_{1} \vee \tau_{2}}(\psi(\tau_{1}, \tau_{2})).
\end{align*}

\newpage 
\section{Appendix}
\begin{proof}[Proof of Proposition \ref{TEXproposition5555}]
\textbf{{i)}} This is a consequence of property $(i)$ of $\rho$ and a well-known property of the essential supremum.\\
[0.2cm]
\textbf{{ii)}} It is sufficient to show that the family $(\rho_{S, \tau_{1} \vee ... \vee \tau_{d}}[\psi(\tau_{1}, ..., \tau_{d})])_{(\tau_{1}, ..., \tau_{d}) \in \Theta_{S}^{d}}$ is directed upwards.\\
Let $(\tau_{1}, ..., \tau_{d}) \in \Theta_{S}^{d}$ and $(\tau_{1}', ..., \tau_{d}') \in \Theta_{S}^{d}$. We define the set: $A \coloneqq$\\ 
$\{\rho_{S, \tau_{1}' \vee ... \vee \tau_{d}'}[\psi(\tau_{1}', ..., \tau_{d}')] \leq \rho_{S, \tau_{1} \vee ... \vee \tau_{d}}[\psi(\tau_{1}, ..., \tau_{d})] \}$. Trivially, $A \in \mathcal{F}_{S}$.\\
[0.2cm]
We set: for $i \in \{1, ..., d\}$, $\nu_{i} \coloneqq \tau_{i}\mathbbm{1}_{A} + \tau_{i}'\mathbbm{1}_{A^{c}}$. We have, by Property \ref{TEXPROperty-3-1},
\begin{align*}
\rho_{S, \nu_{1} \vee ... \vee \nu_{d}}[\psi(\nu_{1}, ..., \nu_{d})] &= \rho_{S, \nu_{1} \vee ... \vee \nu_{d}}[\psi(\nu_{1}, ..., \nu_{d})]\mathbbm{1}_{A} + \rho_{S, \nu_{1} \vee ... \vee \nu_{d}}[\psi(\nu_{1}, ..., \nu_{d})]\mathbbm{1}_{A^{c}}\\
&= \rho_{S, \tau_{1} \vee ... \vee \tau_{d}}[\psi(\tau_{1}, ..., \tau_{d})]\mathbbm{1}_{A} + \rho_{S, \tau_{1}' \vee ... \vee \tau_{d}'}[\psi(\tau_{1}', ..., \tau_{d}')]\mathbbm{1}_{A^{c}}\\
&= \max(\rho_{S, \tau_{1} \vee ... \vee \tau_{d}}[\psi(\tau_{1}, ..., \tau_{d})], \rho_{S, \tau_{1}' \vee ... \vee \tau_{d}'}[\psi(\tau_{1}', ..., \tau_{d}')]).
\end{align*}
This shows that the family $(\rho_{S, \tau_{1} \vee ... \vee \tau_{d}}[\psi(\tau_{1}, ..., \tau_{d})])_{(\tau_{1}, ..., \tau_{d}) \in \Theta_{S}^{d}}$ is stable by maximum (hence, it is directed upwards). Hence, by a well-known property of the essential supremum, statement (ii) holds true.\\
[0.2cm]
\textbf{{iii)}} By statement (i), the family $(V(S))_{S \in \Theta}$ is admissible. Moreover, $(V(S))_{S \in \Theta}$ is $p$-integrable by Assumption \ref{TEXAssumption3-3}.\\
[0.2cm]
Let $S, S'$ in $\Theta$ be such that $S \leq S'$ a.s. We will show that $V(S) \geq \rho_{S, S'}[V(S')]$ a.s.\\
[0.2cm]
By statement (ii), there exists a sequence $(\tau^{n}_{1}, \tau^{n}_{2}, ..., \tau^{n}_{d}) \in \Theta^{n}_{S'}$ such that $V(S') = \lim_{n \to +\infty}\uparrow\rho_{S', \tau^{n}_{1} \vee \tau^{n}_{2} \vee ... \vee \tau^{n}_{d}}[\psi(\tau^{n}_{1}, \tau^{n}_{2}, ..., \tau^{n}_{d})]$.\\
[0.2cm]
Hence, 
\begin{align*}
\rho_{S, S'}[V(S')] &= \rho_{S, S'}[\lim_{n \to +\infty}\rho_{S', \tau^{n}_{1} \vee \tau^{n}_{2} \vee ... \vee \tau^{n}_{d}}[\psi(\tau^{n}_{1}, \tau^{n}_{2}, ..., \tau^{n}_{d})]]\\
&\leq \liminf_{n \to +\infty}\rho_{S, S'}[\rho_{S', \tau^{n}_{1} \vee \tau^{n}_{2} \vee ... \vee \tau^{n}_{d}}[\psi(\tau^{n}_{1}, \tau^{n}_{2}, ..., \tau^{n}_{d})]],
\end{align*}
where we have used the monotone Fatou property with respect to terminal condition (property (vii) of $\rho$).\\
[0.2cm]
By the consistency property of $\rho$, 
\begin{align*}
\rho_{S, S'}[\rho_{S', \tau^{n}_{1} \vee \tau^{n}_{2} \vee ... \vee \tau^{n}_{d}}[\psi(\tau^{n}_{1}, \tau^{n}_{2}, ..., \tau^{n}_{d})]] = \rho_{S, \tau^{n}_{1} \vee \tau^{n}_{2} \vee ... \vee \tau^{n}_{d}}[\psi(\tau^{n}_{1}, \tau^{n}_{2}, ..., \tau^{n}_{d})].
\end{align*}
Finally, 
\begin{align*}
\rho_{S, S'}[V(S')] \leq  \liminf_{n \to +\infty}\rho_{S, \tau^{n}_{1} \vee \tau^{n}_{2} \vee ... \vee \tau^{n}_{d}}[\psi(\tau^{n}_{1}, \tau^{n}_{2}, ..., \tau^{n}_{d})] \leq V(S).
\end{align*}
This shows the $(\Theta, \rho)$-supermartingale property.

\end{proof}

\newpage
\bibliographystyle{plainnat}

\begin{thebibliography}{99}

\bibitem[Bayraktar et al.(2010)]{Bayraktar-2}  E. Bayraktar,  I. Karatzas,  and S. Yao,  \textit{Optimal stopping for dynamic convex risk measures}, Illinois Journal of Mathematics  54 (2010), pp. 1025-1067. 

\bibitem[Bayraktar and Yao-Part I(2011)]{Bayraktar} E. Bayraktar and S. Yao, \textit{Optimal stopping for non-linear expectations Part I},  Stochastic Processes and Their Applications 121  (2011), pp. 185-211.

\bibitem[Bayraktar and Yao-Part II(2011)]{Bayraktar-3} E. Bayraktar and S. Yao, \textit{Optimal stopping for non-linear expectations Part II}, Stochastic Processes and Their Applications 121  (2011), pp. 212-264.

\bibitem[Ben Latifa et al. (2015)]{Latifa-Bonnans-Mnif} I. Ben Latifa, J. F. Bonnans and M. Mnif, \textit{A general optimal multiple stopping problem with an application to swing options}, Stochastic Analysis and Applications 33(4) (2015), pp. 715-739.

\bibitem[Bender et al (2015)]{Bender-Schoenmakers-Zhang} C. Bender, J. Schoenmakers and J. Zhang, \textit{Dual representations for general multiple stopping problems}, Mathematical Finance (2015), vol. 25, issue 2, pp. 339-370.
 
\bibitem[Bion-Nadal(2009)]{Biagini} J. Bion-Nadal, \textit{Time consistent dynamic risk processes}, Stochastic Processes and Their Applications 119 (2009), pp. 633-654.

\bibitem[Rene Carmona and Nizar Touzi(2008)]{Carmona-Touzi-1} R. Carmona and N. Touzi, \textit{Optimal multiple stopping and valuation of swing options}, Mathematical Finance, Vol.18, Issue 2, pp. 239-268.

\bibitem[Cheridito et al.(2006)]{Cheridito} P. Cheridito, F. Delbaen, and M. Kupper, \textit{Dynamic monetary risk measures for
bounded discrete-time processes}, Electron. J. Probab. 11 (2006), pp. 57-106 (electronic).

\bibitem[Delbaen et al.(2010)]{Delbaen} F. Delbaen, S. Peng and E. Rosazza - Gianin, \textit{Representation of the penalty term of dynamic concave utilities}, Finance Stoch. 14 (2010), pp. 449-472. 

\bibitem[Dumitrescu et al.(2016)]{Dumitrescu-Quenezz-Sulem} R. Dumitrescu, M.C. Quenez and A. Sulem, \textit{Mixed generalized Dynkin game and stochastic control in a Markovian framework}, Stochastics (2016), pp. 400-429. 

\bibitem[El Karoui (1981)]{ElKaroui_notes} N. El Karoui, \textit{Les aspects probabilistes du contr$\hat{o}$le stochastique}, in: Ecole d'\' et\' e de Saint-Flour,  Lecture Notes in Math., vol. 876, (1981), Springer, Berlin, , pp. 73-238.   

\bibitem[El Karoui et al.(1997)]{ElKaroui}  N. El Karoui, S. Peng, and M. C. Quenez, \textit{Backward stochastic differential equations in finance}, Mathematical Finance 7  (1997), pp. 1-71.

\bibitem[El Karoui and Quenez (1997)]{EQ} N. El Karoui and M.-C. Quenez, \textit{Non-linear pricing theory and backward stochastic differential equations},  In Financial Mathematics, Lectures Notes in Mathematics 1656, Ed. W. Runggaldier,  Springer, 1997.

\bibitem[Ekren et al.(2014)]{Ekren} I. Ekren, N. Touzi and J. Zhang,
\textit{Optimal stopping under nonlinear expectation},
Stochastic Processes and their Applications (2014), volume 124(10), pp. 3277-3311.

\bibitem[Grigorova et al.(2015)] {Grigorova-1} M. Grigorova,  P. Imkeller , E. Offen, Y. Ouknine, and M.-C. Quenez, \textit{Reflected BSDEs when the obstacle is not right-continuous and optimal stopping}, Annals of Applied Probability (2017), volume 25(5), pp. 3153-3188. 

\bibitem[Grigorova et al.(2020)] {Grigorova-3} M. Grigorova, P. Imkeller, Y. Ouknine and M.-C. Quenez, \textit{Optimal stopping with f-expectations: The irregular case}, Stochastic Processes and their Applications (2020), volume 130 (3), pp. 1258-1288.

\bibitem[Grigorova et al.(2020)2] {Grigorova-4} M. Grigorova, P. Imkeller, Y. Ouknine and M.-C. Quenez, \textit{On the strict value of the non-linear optimal stopping problem}, Electronic Communications in Probability (2020), volume 25, paper 49, 9 pages.

\bibitem[Grigorova et al.(2024)]{Grigorova-Quenez-Peng-1}  M. Grigorova, M.-C. Quenez and P. Yuan, \textit{Optimal stopping: Bermudan strategies meet non-linear evaluations}, Electron. J. Probab. 29 (2024), pp. 1-29.

\bibitem[Grigorova et al.(2023)]{Grigorova-Quenez-Peng-2}  M. Grigorova, M.-C. Quenez and P. Yuan, \textit{Non-linear non-zero-sum Dynkin games with Bermudan strategies}, 2311.01086, (https://arxiv.org/pdf/2311.01086.pdf), accepted for publication in the Journal of Optimization Theory and Applications.

\bibitem[Grigorova and Quenez(2016)]{Grigorova-2} M. Grigorova and M.-C. Quenez, \textit{Optimal stopping and a non-zero-sum Dynkin game in discrete time with risk measures induced by BSDEs},  Stochastics (2016), pp. 259-279. 
 
\bibitem[Grigorova et al.(2021)]{Grigorova_Am} M. Grigorova, M.-C. Quenez and A. Sulem,  \textit{American options in a non-linear incomplete market model with default}, Stochastic Processes and their Applications (2021), volume 142, pp. 479-512. 

\bibitem[Han and Li (2023)]{Han-Li} Y. Han and N. Li, \textit{A new deep neural network algorithm for multiple stopping with applications in options pricing}, Communications in Nonlinear Science and Numerical Simulation (2023), vol. 117, 106881.

\bibitem[Kim et al.(2021)]{Rutkowski} E. Kim, T. Nie and M. Rutkowski, \textit{American options in nonlinear markets}, Electronic Journal of  Probability (2021), volume 26, pp. 1-41.

\bibitem[Klimsiak and Rzymowski(2021)]{Klimsiak} T. Klimsiak and M. Rzymowski, \textit{Reflected BSDEs with two optional barriers and monotone coefficient on general filtered space}, Electronic  Journal of Probability 26 (2021), article no. 91, 1-24.

\bibitem[Kobylanski et al.(2009)]{Kobylanski-Quenez-Mironescu-1} M. Kobylanski, M. C. Quenez and E. Rouy-Mironescu, \textit{Optimal double stopping time problem}, C. R. Math. Acad. Sci. Paris 348 (2010), no. 1-2, pp. 65-69.

\bibitem[Kobylanski et al.(2011)]{Kobylanski-Quenez-Mironescu-2} M. Kobylanski, M. C. Quenez and E. Rouy-Mironescu, \textit{Optimal multiple stopping time problem}, The Annals of Applied Probability (2011), Vol.21, No.4, pp. 1365-1399.

\bibitem[Kr\"atschmer and Schoenmakers(2010)]{Schoe}  V. Kr\"atschmer and J. Schoenmakers, \textit{Representations for optimal stopping under dynamic monetary utility functionals}, SIAM Journal on Financial Mathematics 1 (2010), pp. 811-832.

\bibitem[Li (2022)]{Li-1} H. Li, \textit{Optimal stopping problems under $g$-expectation}, Appl. Math. Optim. (2022), vol.85, Iss. 17.

\bibitem[Li (2023)]{Li-2} H. Li, \textit{Optimal multiple stopping problem under nonlinear expectation}, Adv. Appl. Probab. (2023), vol.55, pp. 151-178.

\bibitem[Magnino et al. (2024)]{Magnino-Zhu-Lauriere} L. Magnino, Y. Zhu and M. Lauriere, \textit{Deep learning algorithms for mean field optimal stopping in finite space and discrete time}, 2410.08850, (https://arxiv.org/pdf/2410.08850)

\bibitem[Neveu(1972)]{Neveu}  J. Neveu,  \textit{Discrete-parameter martingales}, North-Holland, Amsterdam, 1975. 

\bibitem[Nunno and Gianin(2024)]{Nunno-Gianin} G. D. Nunno and E. R. Gianin, \textit{Fully dynamic risk measures: horizon risk, time-consistency and relations with BSDEs and BSVIEs}, SIAM Journal on Financial Mathematics (2024), vol. 15, Iss. 2, pp. 399-435.

\bibitem[Nutz and Zhang (2015)]{Nutz} M. Nutz and J.  Zhang, \textit{Optimal Stopping under adverse non-linear expectations and related games},  The Annals of Applied Probability (2015), volume 25(5), pp. 2503-2534.  

\bibitem[Pardoux and Peng(1990)]{Pape90}  E. Pardoux and  S. Peng,  \textit{Adapted solution of backward stochastic differential equation}, Systems \& Control Letters 14 (1990), pp. 55-61.

\bibitem[Peng(1997)]{Peng-2} S. Peng, \textit{Backward SDE and related g-expectation}, in \textit{in Backward Stochastic Differential Equations}, Pitman Research Notes in Math. Series, No. 364, El Karoui Mazliak edit, (1997) pp. 141-159.

\bibitem[Peng(2004)]{Pe04}  S.  Peng, \textit{Nonlinear expectations, nonlinear evaluations and risk measures}, Lecture Notes in Math., 1856, Springer, Berlin,  (2004), pp. 165-253.
 
\bibitem[Peng(2004)]{Peng-1} S. Peng, \textit{Backward stochastic differential equations, nonlinear expectations, nonlinear evaluations, and risk measures}, Lecture notes in Chinese Summer School in Mathematics, Weihei, (2004).  

\bibitem[Quenez and Kobylanski(2012)]{Quenez-Kobylanski} M. C. Quenez and M. Kobylanski, \textit{Optimal stopping time problem in a general framework}, Electron. J. Probab. 17, no. 72 (2012), pp. 1-28.

\bibitem[Quenez and Sulem(2013)]{Quenez-Sulem-0}  M. C. Quenez and A. Sulem, \textit{BSDEs with jumps, optimization and applications to dynamic risk measures}, Stochastic Processes and their Applications 123  (2013), pp. 3328-3357.

\bibitem[Quenez and Sulem(2014)]{Quenez-Sulem}  M. C. Quenez and A. Sulem, \textit{Reflected BSDEs and robust optimal stopping for dynamic risk measures with jumps}, Stochastic Processes and their Applications 124 (2014), pp. 3031-3054.

\bibitem[Talbi et al. (2023)]{Talbi-Touzi-Zhang-1} M. Talbi, N. Touzi and J. Zhang, \textit{Dynamic programming equation for the mean field optimal stopping problem}, SIAM Journal on Control and Optimization (2023), vol. 61, Iss. 4, pp. 2140-2164.

\bibitem[Talbi et al. (2024)]{Talbi-Touzi-Zhang-2} M. Talbi, N. Touzi and J. Zhang, \textit{From finite population optimal stopping to mean field optimal stopping}, Ann. Appl. Probab. 34(5) (2024), pp. 4237-4267.


\end{thebibliography}

\end{document}